\documentclass[10pt]{article}%
\usepackage{amsmath}
\usepackage{amsfonts}
\usepackage{mathrsfs}
\usepackage{amssymb}
\usepackage[linkcolor=black,anchorcolor=black,citecolor=black]{hyperref}
\usepackage{graphicx}
\usepackage[body={15.5cm,21cm}, top=3cm]{geometry}%
\setcounter{MaxMatrixCols}{30}
%TCIDATA{OutputFilter=latex2.dll}
%TCIDATA{Version=5.50.0.2960}
%TCIDATA{Codepage=936}
%TCIDATA{LastRevised=Saturday, August 06, 2011 06:33:25}
%TCIDATA{<META NAME="GraphicsSave" CONTENT="32">}
%TCIDATA{<META NAME="SaveForMode" CONTENT="1">}
%TCIDATA{BibliographyScheme=Manual}
%TCIDATA{Language=American English}
%BeginMSIPreambleData
\providecommand{\U}[1]{\protect\rule{.1in}{.1in}}
%EndMSIPreambleData
\providecommand{\U}[1]{\protect \rule{.1in}{.1in}}
\newtheorem{theorem}{Theorem}[section]

\newtheorem{corollary}[theorem]{Corollary}

\newtheorem{definition}[theorem]{Definition}

\newtheorem{lemma}[theorem]{Lemma}

\newtheorem{proposition}[theorem]{Proposition}
\newtheorem{remark}[theorem]{Remark}

\newenvironment{proof}[1][Proof]{\noindent \textbf{#1.} }{\  \rule{0.5em}{0.5em}}
\begin{document}

\title{On monotonicity and order-preservation for multidimensional $G$-diffusion processes}
\author{Peng Luo\thanks{School of Mathematics and Qilu Securities Institute for Financial Studies, Shandong University and Department of Mathematics and Statistics, University of Konstanz; pengluo1989@gmail.com. The author thanks the partial support from China Scholarship Council, and the ¡°111¡± project(No. B12023).}
\and Guangyan Jia\thanks{Qilu Securities Institute for Financial Studies, Shandong University; jiagy@sdu.edu.cn. The author thanks the partial support from the NSF of China (11171186), and the ¡°111¡± project(No. B12023).}}
\date{}
\maketitle

%%%%%%%%%%%%%%%%%%%%%%%%%%%%%%%%%%%%%%%%%%%%%%%%%%%%%%%%%%%%%%%%%%%%%%%%%%%%%%%%%%%%%%%%%%%%%%%%%%%%%%%%%%%%%%%

\begin{abstract}

In this paper, we prove a comparison theorem for multidimensional $G$-SDEs. Moreover we obtain respectively the sufficient conditions and necessary conditions of the monotonicity and order-preservation for two multidimensional $G$-diffusion processes. Finally, we give some applications.
\\
\\
\textbf{Keywords}:  $G$-diffusion processes, $G$-SDE, Comparison theorem, Monotonicity, Order-preservation.
\\
\\
\textbf{Mathematics Subject Classification (2010). }60H30, 60H10.
\end{abstract}

\section{Introduction}
In the classical framework, it is known that It\^{o} diffusions can be used to construct linear semigroups, which are known as Markov semigroups. The relationships among It\^{o}'s diffusions, Markov semigroups and infinitesimal generators have been well studied and many interesting results have been deduced. They can be summarized as follows. We suppose $(X_t)_{t\geq 0}$ to be $n$-dimensional It\^{o} diffusion
\begin{equation*}
dX_t=b(X_t)dt+\sigma(X_t)dW_t,
\end{equation*}
where $(W_t)_{t\geq 0}$ is a $d$-dimensional Brownian motion and $b,~\sigma$ are Lipschitz continuous functions on $\mathbb{R}^n$. The Markov semigroup $P_t$ is defined by $P_tf(x)=E[f(X_{t}^{0,x})]$, where $X^{0,x}_{.}$ represents the It\^{o} process with initial condition $x$ at initial time $t=0$ and $f$ is a function defined on $\mathbb{R}^n$. Here $E[\cdot]$ stands for the expectation related to a probability $P$. The infinitesimal generator $L$ of the Markov semigroup, which satisfies
\begin{equation*}
Lf(x)=\lim\limits_{t\rightarrow 0^+}\frac{P_tf(x)-f(x)}{t}
\end{equation*}
for $f$ appropriately taken such that the above limit exists, is of the following form:
\begin{equation*}
Lf=\frac{1}{2}tr[\sigma^*\frac{\partial^2}{\partial x^2}f\sigma]+b^*\frac{\partial}{\partial x}f,
\end{equation*}
where "*" denotes the transposition. For more details, the readers can refer to, for example, Stroock and Varadhan (\cite{SV}) and Rogers and Williams (\cite{RW}).

The monotonicity property of the semigroups associated with the corresponding diffusions was initiated by Holly (\cite{HO}) and studied by Cox (\cite{CO}) and Harris (\cite{HA}). Afterwards, Herbst and Pitt (\cite{HP}) investigated the use of diffusion equations as a tool for establishing stochastic monotonicity of semigroups. Chen and Wang (\cite{CW}) continued the study on the order-preservation for multidimensional diffusion processes. One of the main results of Chen and Wang (\cite{CW}), which covers the monotonicity result in Herbst and Pitt (\cite{HP}), is as follows:
\begin{lemma}{(Chen and Wang (\cite{CW}, Theorem 1.3))}
Let $A=\frac{1}{2}\Sigma_{i,j=1}^{n}a_{ij}\frac{\partial^2}{\partial  x_i\partial x_j}+\Sigma_{i=1}^{n}l_i\frac{\partial}{\partial x_i}$ (resp. $\bar{A}=\frac{1}{2}\Sigma_{i,j=1}^{n}\bar{a}_{ij}\frac{\partial^2}{\partial  x_i\partial x_j}+\Sigma_{i=1}^{n}\bar{l}_i\frac{\partial}{\partial x_i}$). Assume that ($a_{ij}$) and ($\bar{a}_{ij}$) are nonnegative definite everywhere, $a_{ij},~\bar{a}_{ij},~l_i,~\bar{l}_i\in C(\mathbb{R}^n)$ and the martingale problems for $A$ and $\bar{A}$ are well posed. Let $P_t$ (resp. $\bar{P}_t$) be the Markov semigroup generated by $A$ (resp. $\bar{A}$). Then $P_t\geq\bar{P}_t$ if and only if the following two conditions hold:\\
(a) for all $i$ and $j$, $a_{ij}\equiv\bar{a}_{ij}$ and $a_{ij}(x)$ depends only on $x_i$ and $x_j$;\\
(b) for all $i$, $l_i(x)\geq\bar{l}_i(y)$ whenever $x_i=y_i$ and $x_j\geq y_j$ for all $j\in\{1,\ldots,n\}$ and $j\neq i$.
\end{lemma}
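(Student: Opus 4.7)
The plan is to treat the two implications by different tools: a pathwise coupling for sufficiency and a generator-level rigidity argument for necessity. Here $P_t\geq\bar{P}_t$ is read in the stochastic-dominance sense, i.e., $P_tf(x)\geq\bar{P}_tf(y)$ whenever $f$ is bounded and monotone increasing and $x\geq y$ componentwise; by standard coupling theorems this is equivalent to the existence of a joint realisation of the two diffusions with $X_t\geq\bar{X}_t$ componentwise for all $t\geq 0$ as soon as $X_0\geq\bar{X}_0$.

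For sufficiency I would construct $(X_t,\bar{X}_t)$ as simultaneous solutions of the martingale problems for $A$ and $\bar{A}$ driven by a common Brownian motion, with $X_0=x\geq\bar{X}_0=y$, and prove $Y^i_t:=X^i_t-\bar{X}^i_t\geq 0$ a.s.\ for every $i$ and $t$. Applying Ito's formula with the standard Yamada--Watanabe smooth approximation $\phi_k$ of $(\cdot)^-$, the quadratic-variation term vanishes on the touching set $\{Y^i_s=0\}$ precisely because of (a): $a_{ii}=\bar{a}_{ii}$ depends only on its $i$-th argument, so on $\{X^i_s=\bar{X}^i_s\}$ the two coefficients are literally equal, and a synchronous coupling kills the $\phi_k''$ contribution in the limit $k\to\infty$. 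The drift contribution is controlled by (b): at the first passage time of any $Y^i$ below $0$, all other $Y^j\geq 0$ and $X^i=\bar{X}^i$, so $l_i(X)\geq\bar{l}_i(\bar{X})$ forbids $Y^i$ from crossing. A standard exit-time/iteration argument closes the loop and yields $Y\geq 0$ everywhere.

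For necessity, the identity $Af(x)=\lim_{t\downarrow 0}t^{-1}(P_tf(x)-f(x))$ converts $P_t\geq\bar{P}_t$ into the infinitesimal inequality $Af(x)\geq\bar{A}f(y)$ for every bounded increasing $C^2$ function $f$ and every $x\geq y$. Condition (b) drops out immediately by plugging in $f(u)=u_i$ at pairs $x\geq y$ with $x_i=y_i$. Condition (a) is more delicate and I would extract it in two stages: first, by taking $x=y$ and using increasing test functions built from $u\mapsto g(u_i)$, $g(u_i+u_j)$ and $g(u_i)+g(u_j)$, polarise the second-order form to conclude that the quadratic symbols of $A$ and $\bar{A}$ must coincide, since any strict inequality would make the variance-dominated process violate order preservation in one direction or the other; second, to obtain the statement that $a_{ij}(x)$ depends only on $x_i,x_j$, vary a third coordinate $x_k$ with $k\notin\{i,j\}$ while holding the touching configuration for $i$ and $j$ fixed, and use the saturated equality to rule out $\partial_{x_k}a_{ij}\neq 0$.

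The principal difficulty I foresee is this last rigidity step. Monotone test functions have a constrained sign pattern for their first derivatives, so one cannot probe second-order symbols by ordinary bump functions; instead one must construct a flexible family of bounded increasing $C^2$ functions whose Hessian at a point can be tuned to any prescribed symmetric matrix while keeping first derivatives under tight control. Balancing this construction so that both the equality $a=\bar{a}$ and its pointwise dependence on only two variables fall out of the generator comparison is, I expect, the technical heart of the argument.
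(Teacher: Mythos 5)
This statement is quoted background: the paper cites it as Theorem~1.3 of Chen and Wang \cite{CW} and gives no proof of its own, so there is no internal argument to compare yours against. Judged on its own terms, your architecture (synchronous coupling for sufficiency, infinitesimal generator comparison for necessity) is the standard one, and it is also the template the authors follow for their $G$-analogues (Theorem~\ref{com} for the coupling direction, Theorem~\ref{5.2} and Lemmas~\ref{lem1}--\ref{lem3} for the generator direction). Your necessity sketch is essentially complete modulo routine details: the ``flexible Hessian family'' you worry about is not needed, because the odd-power test functions $f_m(x)=(x_i-x_i^{(1)}\pm 1)^{2m+1}$ and $(x_i+x_j-x_i^{(1)}-x_j^{(1)}\pm 1)^{2m+1}$ are monotone near the touching pair, let you flip the sign of the relevant second derivative while the first derivative stays of order $1/(2m)$ relative to it, and after polarising and sending $m\to\infty$ they deliver exactly the diagonal and $(i,j)$ entries of the quadratic form; varying a third coordinate then gives the ``depends only on $x_i,x_j$'' claim. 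This is precisely the device used in the paper's own Theorem~\ref{5.2} and Lemma~\ref{lem3}.

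The genuine gap is in your sufficiency argument. A synchronous coupling kills the $\phi_k''$ term for $Y^i=X^i-\bar X^i$ only if $d\langle Y^i\rangle_s$ is controlled by $|X^i_s-\bar X^i_s|^2$, i.e.\ only if the entire $i$-th \emph{row} of a square root $\sigma$ of $a$ depends on $x_i$ alone (so that $\sigma_i(X_s)=\sigma_i(\bar X_s)$ on $\{X^i_s=\bar X^i_s\}$). Condition (a) gives you only that $a_{ii}=\sum_k\sigma_{ik}^2$ depends on $x_i$ alone; this does not by itself produce a square root with the required row structure, and bridging that gap (by choosing a special factorisation, or by replacing the SDE coupling with a coupling of martingale problems) is where the real work in Chen--Wang lies. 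It is telling that when the present paper proves its own comparison theorem (Theorem~\ref{com}), condition (B2) \emph{assumes} outright that $(\sigma_i)_k$ depends only on $x_k$ rather than deriving it from a condition on $\sigma\sigma^*$. A second, related issue: the lemma assumes only $a_{ij},l_i\in C(\mathbb{R}^n)$ with well-posed martingale problems, so strong solutions driven by a common Brownian motion need not exist; your coupling must either be built at the level of the martingale problem or obtained by Lipschitz approximation followed by a passage to the limit justified by well-posedness. Neither point is fatal to the strategy, but both are substantive steps your sketch currently treats as automatic.
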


Peng (\cite{Peng1997}) introduced the notion of $g$-expectation defined via a backward stochastic differential equation (BSDE). A $g$-expectation preserves most properties of the classical expectations except nonlinearity since it is a nonlinear functional. Its nonlinearity can be characterized by its generator $g$. Zhang and Jia (\cite{ZG}) constructed a nonlinear semigroup by decoupled FBSDEs. They obtained the equivalent conditions of the monotonicity and order-preservation of semigroups.

Recently, Peng systemically established a time-consistent fully nonlinear
expectation theory ( see \cite{Peng2004}, \cite{Peng2005} and \cite{P09}). As a typical and important case, Peng  introduced the
$G$-expectation theory ( see \cite{P10}  and the references therein) in 2006.
In the  $G$-expectation framework ($G$-framework for short), the
notion of  $G$-Brownian motion and the corresponding stochastic
calculus of It\^{o}'s type were established. On that basis, Gao \cite{G} and Peng \cite {Peng4} have studied the existence and uniqueness of the solution of $G$-SDEs under a standard Lipschitz condition on its coefficients. Moreover, based on Gao \cite{G}, Bai and Lin \cite{BL} obtained the existence and uniqueness of the solution of GSDEs under some integral-Lipschitz conditions. For a recent account and development of this theory we refer the reader to (\cite{LX},\cite{LQ},\cite{LQ1},\cite{L},\cite{LW}). Recently, Hu et. al. (\cite{HJPS}) proved an existence and uniqueness result on BSDEs driven by $G-$Brownian motions (G-BSDEs), further (in \cite{HJPS1}) they gave a comparison theorem for G-BSDEs. He et. al. (\cite{HH}) proved the representation theorem for generators of G-BSDEs, and then the converse comparison theorem of G-BSDEs and some equivalent results for nonlinear expectations generated by G-BSDEs.

This paper is organized as follow: In section 2, we recall some notations and results that we will use in this paper. In section 3, we give our assumptions and recall some notations and results of $G$-SDES and $G$-BSDEs. In section 4, we obtain a comparison theorem for multidimensional $G$-SDEs. In section 5, we obtain respectively the sufficient conditions and necessary conditions of the monotonicity and order-preservation of two multidimensional $G$-diffusion processes. In section 6, some applications are given.
\section{Preliminaries}
\subsection{Sublinear expectation, $G$-Brownian motion and capacity}
\subsubsection{Sublinear expectation}
We present some preliminaries in the theory of sublinear
expectation, $G$-Brownian motions and the capacity under
$G$-framework. More details can be found in Peng \cite{Peng4} and Li and Peng \cite{LP}.

\begin{definition}
Let $\Omega$ be a given set and let $\mathcal{H}$ be a linear space
of real valued functions defined on $\Omega$ with $c\in\mathcal{H}$
for all constants $c$, and $|X|\in\mathcal{H}$, if $X\in\mathcal{H}$.
$\mathcal{H}$ is considered as the space of our random variables. A
sublinear expectation $\hat{\mathbb{E}}$ on $\mathcal{H}$ is a
functional $\hat{\mathbb{E}}: \mathcal{H}\rightarrow \mathbb{R}$
satisfying the following
properties: for all $X, Y\in\mathcal{H}$, we have\\
(a)Monotonicity: if $X\geq Y$, then $\hat{\mathbb{E}}[X]\geq\hat{\mathbb{E}}[Y].$\\
(b)Constant preserving: $\hat{\mathbb{E}}[c]=c$, $\forall \ c\in
\mathbb{R}.$\\
(c)Sub-additivity: $\hat{\mathbb{E}}[X+Y]\leq \hat{\mathbb{E}}[X]+\hat{\mathbb{E}}[Y].$\\
(d)Positive homogeneity: $\hat{\mathbb{E}}[\lambda
X]=\lambda\hat{\mathbb{E}}[X]$, $\forall\ \lambda\geq0.$

The triple $(\Omega,\mathcal{H},\hat{\mathbb{E}})$ is called a
sublinear expectation space.  $X \in\mathcal{ H}$ is called a random
variable in $(\Omega,\mathcal{H},\hat{\mathbb{E}})$. We often call $Y
= (Y_1, \ldots, Y_d), Y_i \in\mathcal{ H}$ a $d$-dimensional random
vector in $(\Omega,\mathcal{H},\hat{\mathbb{E}})$. Let us consider a
space of random variables $\mathcal{H}$ satisfying: if $X_i
\in\mathcal{H}, i = 1, \ldots, d,$ then $\varphi(X_1, \cdots,X_d)
\in\mathcal{H}$, for all $ \varphi\in
{C}_{b,Lip}(\mathbb{R}^d)$, where
${C}_{b,Lip}(\mathbb{R}^{d})$ is the space of all bounded real-valued
Lipschitz continuous functions. \end{definition}
\begin{definition}
In a sublinear expectation space
$(\Omega,\mathcal{H},\hat{\mathbb{E}})$ an $m$-dimensional random vector
$X=(X_1,\ldots,X_m)$ is said to be independent from another $n$-dimensional random vector $Y=(Y_1,\ldots,Y_n)$ under $\hat{\mathbb{E}}$ if for any test
function $\varphi\in{C}_{b,Lip}(\mathbb{R}^{m+n})$ we have
$$\hat{\mathbb{E}}[\varphi(X,Y)]=\hat{\mathbb{E}}[\hat{\mathbb{E}}[\varphi(x,Y)]_{x=X}]$$
\end{definition}
\begin{definition}
Let $X_{1}$ and $X_{2}$ be two $n$-dimensional random vector defined on sublinear
expectation spaces
$(\Omega_{1},\mathcal{H}_{1},\hat{\mathbb{E}}_{1})$ and
$(\Omega_{2},\mathcal{H}_{2},\hat{\mathbb{E}}_{2})$, respectively.
They are called identically distributed, denoted by $X_{1}\overset{d}{=} X_{2}$,
if
$$\hat{\mathbb{E}}_{1}[\varphi(X_{1})]=\hat{\mathbb{E}}_{2}[\varphi(X_{2})], \ \forall \ \varphi\in {C}_{b,Lip}(\mathbb{R}^n)$$
We call $\bar{X}$ an independent copy of $X$ if $\bar{X}\overset{d}{=}X$ and
$\bar{X}$ is independent from $X$.
\end{definition}
\begin{definition}[$G$-normal distribution]
An $m$-dimensional random vector $X=(X_1,\ldots,X_m)$ on a sublinear expectation space
$(\Omega,\mathcal{H},\hat{\mathbb{E}})$ is called (centralized)
$G$-normal distributed if for any $a, b\geq 0$
$$aX+b\bar{X}\overset{d}{=}\sqrt{a^{2}+b^{2}}X$$
where $\bar{X}$ is an independent copy of $X$. The letter $G$
denotes the function
$$G(A):=\frac{1}{2}\hat{\mathbb{E}}[(AX,X)]:\mathbb{S}(d)\mapsto\mathbb{R}. $$

\end{definition}

\subsubsection{$G$-Brownian motion}

\begin{definition}[$G$-Brownian motion] Let $G : \mathbb{S}(d) \mapsto\mathbb{R}$ be a given monotonic and sublinear function. A process $(B({t})\in\mathcal{H})_{t\geq
0}$ on a sublinear expectation space
$(\Omega,\mathcal{H},\hat{\mathbb{E}})$ is called a $G$-Brownian
motion if the following properties are
satisfied: \\
(a) $B(0)=0.$\\
(b) For each $t, s\geq 0$ the increment $B_{t+s}-B_{t}\overset{d}{=} \sqrt{s}X$  and independent from
$(B_{t_{1}},B_{t_{2}},...,B_{t_{n}})$ for each $n\in \mathbb{N}$,
$0\leq t_{1}\leq t_{2}\leq ...\leq t_{n}\leq t$, where $X$ is $G$-normal distributed.
\end{definition}

 We denote by
$\Omega=C_{0}^{d}(\mathbb{R}^{+})$ the space of all
$\mathbb{R}^{d}$-value continuous paths
$(\omega_{t})_{t\in\mathbb{R}^{+}}$, with $\omega_{0}=0$, equipped
with the distance
$$\rho(\omega^{1},\omega^{2}):=\sum_{i=1}^{\infty}2^{-i}[(\max\limits_{t\in[0,i]}|\omega^{1}(t)-\omega^{2}(t)|)\wedge 1].$$

We denote by $\mathcal{B}({\Omega})$ the Borel $\sigma$-algebra of $\Omega$.

We also denote, for each $t\in[0,\infty)$:

$\Omega_{t}:=\{\omega({\cdot\wedge t}):\omega\in\Omega\},$

$\mathcal{F}_{t}:=\mathcal {B}(\Omega_{t}),$

$L^{0}{(\Omega)}:$ the space of all $\mathcal
{B}(\Omega)$-measurable real function,

$L^{0}{(\Omega_{t})}:$ the space of all $\mathcal
{B}(\Omega_{t})$-measurable real function,

$B_{b}{(\Omega_t)}:$ all bounded elements in $L^{0}{(\Omega)}$,
$B_{b}{(\Omega_{t})}:=B_{b}{(\Omega)}\cap L^{0}{(\Omega_{t})},$

$C_{b}{(\Omega_t)}:$ all continuous elements in $B_{b}{(\Omega)}$,
$C_{b}{(\Omega_{t})}:=B_{b}{(\Omega)}\cap L^{0}{(\Omega_{t})}.$

In Peng \cite{Peng4}, a $G$-Brownian motion is constructed on a sublinear expectation
sapce $(\Omega,L_{G}^p,\hat{\mathbb{E}},(\hat{\mathbb{E}}_t)_{t\geq 0})$for $p = 1$, where   $L_{G}^p$
 is a Banach space
under the natural norm $\|X\|_p=\hat{\mathbb{E}}[|X|^p]^{1/p}$. In this space the corresponding
canonical process $B(t,\omega) = \omega(t), t \in[0,\infty)$, for $\omega\in\Omega$,
 is a $G$-Brownian motion. It is proved in Denis et al.\cite{DHP} that
$L^{0}{(\Omega)}\supset L_{G}^{p}(\Omega)\supset
C_{b}{(\Omega)}$, and there exists a weakly compact family $\mathcal
{P}$ of probability measures defined on $(\Omega, \mathcal
{B}(\Omega))$ such that
$$\hat{\mathbb{E}}[X]=\sup\limits_{P\in\mathcal{P}}E_{P}[X], \ \ X\in L_{G}^{1}{(\Omega)}.$$

We now introduce the natural choquet capacity
$$c(A):=\sup\limits_{P\in\mathcal{P}}P(A), \ \ A\in\mathcal
{B}(\Omega).$$
\begin{definition}A set $A\subset\Omega$ is polar if $c(A)=0$.  A
property holds $ `quasi$-$surely '$ (q.s.) if it holds outside a
polar set.
\end{definition}
\begin{definition}A real function $X$ on $\Omega$ is said to be quasi-continuous if for each $\varepsilon>0$,
there exists an open set $O$ with $c(O)<\varepsilon$ such that
$X|_{O^{c}}$ is continuous.
\end{definition}

Then $L_{G}^{p}(\Omega)$ can be characterized as follows:
$$L_{G}^{p}(\Omega)=\{X\in\mathbb{L}^{0}(\Omega)|\sup\limits_{P\in\mathcal{P}}E_{P}[|X|^{p}]<\infty, and\ X\ is\ c \ \text{quasi-surely continuous}\}.$$

We denote, for $p>0$,

$\mathcal{L}^{p}:=\{X\in L^{0}(\Omega):
\hat{\mathbb{E}}[|X|^{p}]=\sup\limits_{P\in\mathcal{P}}E_{P}[|X|^{p}]<\infty\};$

$\mathcal{N}^{p}:=\{X\in L^{0}(\Omega):
\hat{\mathbb{E}}[|X|^{p}]=0\};$

$\mathcal{N}:=\{X\in L^{0}(\Omega):\  $X=0$,\ c\text{-qusi surely}\
\text{(q.s.)}\}$

It is seen that $\mathcal{L}^{p}$ and $\mathcal{N}^{p}$ are linear
spaces and $\mathcal{N}^{p}=\mathcal{N}$, for each $p>0$. We define
the space ${L}^{p}(\Omega)=\mathcal{L}^{p}/\mathcal{N}$  as the
equivalence classes of $\mathcal{L}^{p}$ modulo equality in
$\|\cdot\|_p$. Similarly, we can define ${L}^{p}(\Omega_t)=
{L}^{p}(\Omega) \cap L^{0}{(\Omega_{t})}$. As usual, we do not make
the distinction between classes and their representatives.
\begin{definition}Let ${L}_{b}^{p}(\Omega)$ be the completion of
$B_{b}(\Omega)$ under the Banach norm
$\hat{(\mathbb{E}}[|X|^{p}])^{1/p}$.
\end{definition}

Then  we have the following characterisation (see
\cite{DHP}): for each $p\geq 1$,
$${L}_{b}^{p}(\Omega)=\{X\in {L}^{p}(\Omega): \lim\limits_{n\rightarrow\infty}\hat{\mathbb{E}}[|X|^{p}\mathbf{I}_{\{|X|>n\}}]=0\}.$$

\begin{definition} For $p\geq 1$ and $T\in\mathbb{R}^{+}$ be fixed.
Consider the following simple type of processes:
\begin{align*}
M_{G}^{0,p}(0,T)=&\{\eta:=\eta(t,\omega)=\sum_{j=0}^{N-1}\xi_{j}(\omega)I_{[t_{j},t_{j+1})}(t)\\
 &\forall\ N>0,\ 0=t_{0}<...<t_{N}=T,\  \xi_{j}(\omega)\in \mathbb{L}_{G}^p(\Omega_{t_{j}}),\
 j=0,1,2,...,N-1.\}.
\end{align*}
For each $p\geq 1$, we denote by $M_{G}^{p}(0,T)$ the completion of
$M_{G}^{0,p}(0,T)$ under the norm
$$||\eta||_{M^{p}_{G}(0,T)}=|\int_{0}^{T}\hat{\mathbb{E}}[|\eta(t)|^{p}]dt|^{1/p}.$$
\end{definition}
\begin{definition} For $p\geq 1$ and $T\in\mathbb{R}^{+}$ be fixed.
Consider the following simple type of processes:
\begin{align*}
M_{b}^{0}(0,T)=&\{\eta:=\eta_{t}(\omega)=\sum_{j=0}^{N-1}\xi_{j}(\omega)\mathbf{I}_{[t_{j},t_{j+1})}(t).\\
 &\forall\ N>0,\ 0=t_{0}<...<t_{N}=T,\  \xi_{j}(\omega)\in B_{b}(\Omega_{t_{j}}),\
 j=0,1,2,...,N-1.\}.
\end{align*}
For each $p\geq 1$, we denote by $M_{b}^{p}(0,T)$ the completion of
$M_{b}^{0}(0,T)$ under the norm $||\cdot||_{M^{p}_G(0,T)}$.
\end{definition}
\begin{definition}(Integration with respect to $G$-Brownian motion)
For each $\eta\in M_{G}^{0,2}(0,T)$ with the form
$$\eta_{t}(\omega)=\sum_{k=0}^{N-1}\xi_{k}(\omega)I_{[t_{k},t_{k+1})}(t)$$
define
$$I(\eta)=\int_{0}^{T}\eta(s)dB(s):=\sum_{k=0}^{N-1}\xi_{k}(B_{t_{k+1}^{N}}-B_{t_{k}^{N}}).$$
The mapping $I: M_{G}^{0,2}(0,T)\mapsto
\mathbb{L}^{2}_{G}(\Omega_{T})$ can be continuously extended to $I:
M_{G}^{2}(0,T)\mapsto \mathbb{L}^{2}_{G}(\Omega_{T})$. For each
$\eta\in M_{G}^{2}(0,T)$, the stochastic integral is defined by
$$I(\eta):=\int_{0}^{T}\eta(s)dB_s,\ \ \eta\in M_{G}^{2}(0,T).$$
\end{definition}
%\begin{definition}(Integration with respect to $\langle B\rangle$)
%Define a mapping $M_{G}^{0,1}(0,T)\mapsto\mathbb
%{L}^{1}_G(\Omega_{T})$ as follows:
%$$Q(\eta)=\int_{0}^{T}\eta(s)d\langle B\rangle(s):=\sum_{k=0}^{N-1}\xi_{k}[\langle B\rangle_{t_{k+1}^{N}}-\langle B\rangle_{t_{k}^{N}}].$$
%Then $Q$ can be uniquely extended to $M_{G}^{1}(0,T)\mapsto \mathbb{L}^{1}_{G}(\Omega_{T})$. We
%still denote this mapping by
%$$Q(\eta):=\int_{0}^{T}\eta(s)dB_s,\ \ \eta\in M_{G}^{1}(0,T).$$
%\end{definition}

We have the following general case.
\begin{definition}
For each $\eta\in M_{b}^{0}(0,T)$ with the form
$$\eta_{t}(\omega)=\sum_{k=0}^{N-1}\xi_{k}(\omega)\mathbf{I}_{[t_{k},t_{k+1})}(t),$$
define
$$I(\eta)=\int_{0}^{T}\eta(s)dB_s:=\sum_{k=0}^{N-1}\xi_{k}(B_{t_{k+1}^{N}}-B_{t_{k}^{N}}).$$
The mapping $\mathbf{I}: M_{b}^{0}(0,T)\mapsto {L}^{2}(\Omega_{T})$
can be continuously extended to $\mathbf{I}: M_{b}^{2}(0,T)\mapsto
{L}^{2}(\Omega_{T})$. For each $\eta\in M_{b}^{2}(0,T)$, the
stochastic integral is defined by
$$I(\eta):=\int_{0}^{T}\eta(s)dB_s,\ \ \eta\in M_{b}^{2}(0,T).$$
\end{definition}
%\begin{definition}
%Define a mapping $Q: M_{b}^{0}(0,T)\mapsto {L}^{1}(\Omega_{T})$ as
%follows:
%$$Q(\eta)=\int_{0}^{T}\eta(s)d\langle B\rangle(s):=\sum_{k=0}^{N-1}\xi_{k}[\langle B\rangle_{t_{k+1}^{N}}-\langle B\rangle_{t_{k}^{N}}].$$
%Then $Q$ can be uniquely extended to $M_{b}^{1}(0,T)$. We still
%denote this mapping by
%$$Q(\eta):=\int_{0}^{T}\eta(s)d\langle B\rangle_s,\ \ \eta\in M_{b}^{1}(0,T).$$
%\end{definition}

For notational simplicity, we denote by $B^i:=B^{e_i}$ the $i$th coordinate of the d-dimensional $G$-Brownian motion $B$, under a given orthonormal basis $(e_1,\ldots,e_d)$ of $\mathbb{R}^d$. We also denote by $B_{t}^{a}:=(a,B_t)$ for fixed $a\in\mathbb{R}^d$. Then $(B_{t}^a)_{t\geq 0}$ is a 1-dimensional $G$-Brownian motion with $\sigma_{aa^{T}}^{2}=\hat{\mathbb{E}}[(a,B_1)^2]$ and $\sigma_{-aa^{T}}^{2}=-\hat{\mathbb{E}}[-(a,B_1)^2]$. Let $a$ and $\bar{a}$ be two given vectors in $\mathbb{R}^d$. We can define
\begin{equation*}
\langle B^a\rangle_t:=(B^{a}_t)^2-\int_{0}^{t}B_{s}^adB_{s}^a
\end{equation*}
where $\langle B^a\rangle$ is called the quadratic variation process of $B^a$. We can also define mutual variation process by
\begin{equation*}
\langle B^a,B^{\bar{a}}\rangle_t:=\frac{1}{4}[\langle B^a+B^{\bar{a}}\rangle_t-\langle B^a-B^{\bar{a}}\rangle_t]
\end{equation*}
It\^o's integral with respect to $\langle B^a\rangle$ or $\langle B^i,B^{j}\rangle$ can be similarly defined.
By Li and Peng \cite{LP}, we have
\begin{lemma}
Let $X\in M_{b}^{p}(0,T)$. Then for each $\varepsilon >0$, there
exists $\delta >0$ such that for all $\eta\in M_{b}^{0}(0,T)$
satisfying $\hat{\mathbb{E}}\int_{0}^{T}|\eta_{t}|dt\leq\delta$ and
$|\eta_{t}(\omega)|\leq 1$, we have
$\hat{\mathbb{E}}\int_{0}^{T}|X_{t}|^{p}|\eta_{t}|dt\leq\varepsilon$.
\end{lemma}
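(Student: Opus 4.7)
The plan is to use the fact that $M_b^0(0,T)$ is dense in $M_b^p(0,T)$, so the bounded simple processes can approximate $X$ in $\|\cdot\|_{M_G^p(0,T)}$, and then exploit boundedness of the approximants.

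First, fix $\varepsilon>0$ and pick an approximant $X^N\in M_b^0(0,T)$ with
\[
\|X-X^N\|^p_{M^p_G(0,T)}=\int_0^T\hat{\mathbb{E}}[|X_t-X_t^N|^p]\,dt<\frac{\varepsilon}{2^{p}}.
\]
Since $X^N=\sum_{j}\xi_j\mathbf{I}_{[t_j,t_{j+1})}$ with each $\xi_j\in B_b(\Omega_{t_j})$, there is a deterministic constant $C_N:=\max_j\|\xi_j\|_\infty<\infty$ bounding $|X^N_t(\omega)|$ uniformly in $t$ and $\omega$.

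Next, decompose using the elementary inequality $(a+b)^p\le 2^{p-1}(a^p+b^p)$ together with monotonicity and sub-additivity of $\hat{\mathbb{E}}$:
\[
\hat{\mathbb{E}}\!\int_0^T|X_t|^p|\eta_t|\,dt \le 2^{p-1}\hat{\mathbb{E}}\!\int_0^T|X_t-X_t^N|^p|\eta_t|\,dt+2^{p-1}\hat{\mathbb{E}}\!\int_0^T|X_t^N|^p|\eta_t|\,dt.
\]
For the second term, uniform boundedness gives
\[
2^{p-1}\hat{\mathbb{E}}\!\int_0^T|X_t^N|^p|\eta_t|\,dt\le 2^{p-1}C_N^p\,\hat{\mathbb{E}}\!\int_0^T|\eta_t|\,dt\le 2^{p-1}C_N^p\,\delta,
\]
which can be made smaller than $\varepsilon/2$ by choosing $\delta=\varepsilon/(2^p C_N^p)$.

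For the first term I would use the assumption $|\eta_t|\le 1$ together with the standard Fubini-type bound $\hat{\mathbb{E}}\!\int_0^T Y_t\,dt\le\int_0^T\hat{\mathbb{E}}[Y_t]\,dt$ (which follows from sub-additivity on Riemann sums and a limit argument) to get
\[
2^{p-1}\hat{\mathbb{E}}\!\int_0^T|X_t-X_t^N|^p|\eta_t|\,dt\le 2^{p-1}\int_0^T\hat{\mathbb{E}}[|X_t-X_t^N|^p]\,dt<\frac{\varepsilon}{2}.
\]
Adding the two bounds yields $\hat{\mathbb{E}}\!\int_0^T|X_t|^p|\eta_t|\,dt\le\varepsilon$, finishing the proof. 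The main technical point is the density of $M_b^0(0,T)$ in $M_b^p(0,T)$ (built into the definition) together with the Fubini-type inequality; once these are available the argument is just a standard "approximate and split" estimate.
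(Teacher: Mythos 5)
Your argument is correct: the density of $M_b^0(0,T)$ in $M_b^p(0,T)$, the uniform bound on the simple approximant, the inequality $(a+b)^p\le 2^{p-1}(a^p+b^p)$ for $p\ge 1$, and the Fubini-type bound $\hat{\mathbb{E}}\int_0^T Y_t\,dt\le\int_0^T\hat{\mathbb{E}}[Y_t]\,dt$ (valid since $\hat{\mathbb{E}}=\sup_{P\in\mathcal{P}}E_P$ and classical Fubini applies under each $P$) are all legitimate, the quantifiers are in the right order ($\delta$ depends only on $\varepsilon$ through the chosen approximant), and the constants add up. The paper itself gives no proof of this lemma --- it is quoted from Li and Peng --- and your ``approximate and split'' argument is precisely the standard one used there, so there is nothing to flag.
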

\begin{definition}
A stopping time $\tau$ relative to the filtration $(\mathcal{F}_{t})$ is a map
on $\Omega$ with values in $[0,T]$, such that for every $t$,
\[
\{ \tau \leq t\} \in \mathcal{F}_{t}.
\]

\end{definition}
\begin{lemma}
For each stopping time $\tau\in[0,T]$, we have $I_{[0,\tau]}(\cdot)X \in M^p_b
(0, T )$, for each
$X \in M^p_b
(0, T )$.
\end{lemma}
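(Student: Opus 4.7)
The strategy is to show that $I_{[0,\tau]}X$ lies in the completion $M_b^p(0,T)$ of $M_b^0(0,T)$ under $\|\cdot\|_{M_G^p(0,T)}$ by exhibiting an explicit approximating sequence of simple processes. The approximants combine a partition-based discretization of the stopping time $\tau$ with a standard approximation of $X$ by simple processes from $M_b^0(0,T)$, and the preceding uniform-integrability lemma is then used to control the error.

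Fix partitions $0=t_0^n<\cdots<t_{N_n}^n=T$ with mesh $\delta_n\to 0$, and set $\tau_n:=\sum_{k=1}^{N_n}t_k^n I_{\{t_{k-1}^n<\tau\le t_k^n\}}$, so that $\tau\le\tau_n\le\tau+\delta_n$. Define the step process
\[
\eta_n(t):=\sum_{k=1}^{N_n}I_{\{\tau>t_{k-1}^n\}}I_{[t_{k-1}^n,t_k^n)}(t).
\]
Because $\tau$ is a stopping time, $\{\tau>t_{k-1}^n\}\in\mathcal{F}_{t_{k-1}^n}$, and so $\eta_n\in M_b^0(0,T)$ with $|\eta_n|\le 1$. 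A direct case analysis shows that, for Lebesgue-a.e. $t$,
\[
|I_{[0,\tau]}(t)-\eta_n(t)|\le\tilde\eta_n(t):=\sum_{k=1}^{N_n}I_{\{t_{k-1}^n<\tau\le t_k^n\}}I_{[t_{k-1}^n,t_k^n)}(t)\in M_b^0(0,T),
\]
and $\tilde\eta_n$ satisfies $\int_0^T\tilde\eta_n(t)\,dt\le\delta_n$ for every $\omega$, whence $\hat{\mathbb{E}}\int_0^T\tilde\eta_n(t)\,dt\le\delta_n\to 0$.

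Next, by the definition of $M_b^p(0,T)$, I pick $X^m\in M_b^0(0,T)$ with $\|X^m-X\|_{M_G^p(0,T)}\to 0$. The products $X^m\eta_n$ lie in $M_b^0(0,T)$ after taking a common refinement of the two partitions. The triangle inequality gives
\[
\|X^m\eta_n-I_{[0,\tau]}X\|_{M_G^p(0,T)}\le\|(X^m-X)\eta_n\|_{M_G^p(0,T)}+\|X(\eta_n-I_{[0,\tau]})\|_{M_G^p(0,T)}.
\]
The first summand is dominated by $\|X^m-X\|_{M_G^p(0,T)}$ because $|\eta_n|\le 1$, so it tends to $0$ as $m\to\infty$ uniformly in $n$. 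For the second summand, the bound $|\eta_n-I_{[0,\tau]}|^p=|\eta_n-I_{[0,\tau]}|\le\tilde\eta_n$ lets me invoke the previous lemma with $\eta=\tilde\eta_n$: since $\hat{\mathbb{E}}\int_0^T\tilde\eta_n(t)\,dt\le\delta_n$ is arbitrarily small, $\hat{\mathbb{E}}\int_0^T|X_t|^p\tilde\eta_n(t)\,dt\to 0$, which controls $\|X(\eta_n-I_{[0,\tau]})\|_{M_G^p(0,T)}$. A diagonal extraction then yields an $M_b^0(0,T)$-sequence converging to $I_{[0,\tau]}X$ in $\|\cdot\|_{M_G^p(0,T)}$, so $I_{[0,\tau]}X\in M_b^p(0,T)$.

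The main obstacle is that $I_{[0,\tau]}$ is not itself a simple process, so the preceding uniform-integrability lemma cannot be applied to the difference $\eta_n-I_{[0,\tau]}$ directly; the crucial device is the construction of the simple, adapted, $[0,1]$-valued dominator $\tilde\eta_n\in M_b^0(0,T)$ together with the $\omega$-pointwise bound $\int_0^T\tilde\eta_n(t)\,dt\le\delta_n$, where the adaptedness rests on the stopping-time property $\{\tau\le t_k^n\}\in\mathcal{F}_{t_k^n}$.
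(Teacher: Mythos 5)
The paper does not actually prove this lemma: it is quoted, together with the preceding uniform-integrability lemma, from Li and Peng \cite{LP}, so there is no in-paper proof to compare against. Judged on its own terms, your overall architecture (discretize $\tau$ from above to get adapted simple approximants $\eta_n$ of $I_{[0,\tau]}$, approximate $X$ by simple processes, and use the uniform-integrability lemma to absorb the error) is the right one and is essentially the strategy of Li--Peng. However, there is a concrete gap at the step you yourself single out as the crucial device. The dominator
$\tilde\eta_n(t)=\sum_{k}I_{\{t_{k-1}^n<\tau\le t_k^n\}}I_{[t_{k-1}^n,t_k^n)}(t)$
is \emph{not} in $M_b^0(0,T)$: by the paper's definition, the coefficient multiplying $I_{[t_{k-1}^n,t_k^n)}$ must lie in $B_b(\Omega_{t_{k-1}^n})$, i.e.\ be $\mathcal{F}_{t_{k-1}^n}$-measurable, whereas $I_{\{t_{k-1}^n<\tau\le t_k^n\}}$ is only $\mathcal{F}_{t_k^n}$-measurable (your justification via $\{\tau\le t_k^n\}\in\mathcal{F}_{t_k^n}$ gives measurability at the \emph{right} endpoint of the interval, which is not what the definition requires). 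Consequently the uniform-integrability lemma, whose hypothesis is $\eta\in M_b^0(0,T)$, cannot be invoked with $\eta=\tilde\eta_n$, and the control of the second summand collapses. The standard repair is to avoid dominating $\eta_n-I_{[0,\tau]}$ by a simple process altogether: take nested partitions so that $\eta_n=I_{[0,\tau_n)}$ is decreasing in $n$, observe that for $m>n$ the difference $\eta_n-\eta_m$ \emph{is} a $[0,1]$-valued element of $M_b^0(0,T)$ with $\hat{\mathbb{E}}\int_0^T|\eta_n-\eta_m|\,dt\le\delta_n$, apply the lemma to these differences to show that $(X^m\eta_n)$ is Cauchy, and then identify the limit with $I_{[0,\tau]}X$ via the pointwise convergence $\eta_n\downarrow I_{[0,\tau]}$ a.e.

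A secondary point you should address: the uniform-integrability lemma concludes that $\hat{\mathbb{E}}\int_0^T|X_t|^p|\eta_t|\,dt$ is small, while the norm $\|\cdot\|_{M_G^p(0,T)}$ as written in the paper is $(\int_0^T\hat{\mathbb{E}}[|\cdot|^p]\,dt)^{1/p}$. Since $\hat{\mathbb{E}}\int_0^T\le\int_0^T\hat{\mathbb{E}}$ for a sublinear expectation, the lemma bounds the \emph{smaller} of the two quantities, so the sentence ``which controls $\|X(\eta_n-I_{[0,\tau]})\|_{M_G^p(0,T)}$'' does not follow as stated. (In Li--Peng the completion is taken under the norm $(\hat{\mathbb{E}}\int_0^T|\cdot|^p\,dt)^{1/p}$, for which this step is fine; if you work with the paper's literal definition you must either justify the interchange for your specific integrands or point out that the $\hat{\mathbb{E}}\int$ norm is the one intended.)
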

\begin{definition} A process $(M_{t})_{t\geq 0}$ is called a
$G$-martingale if for each $t\in[0,T], M_{t}\in
\mathbb{L}_{G}^{1}(\Omega_{t})$ and for each $s\in[0,t)$, we have
$$\hat{\mathbb{E}}_s[M_{t}]=M_{s}.$$
\end{definition}
\begin{corollary}
For each $\eta\in M_G^2(0,T)$, the process $(\int_{0}^{T}\eta(s)d B_s)_{t\in[0,T]}$ is a $G$-martingale.
\end{corollary}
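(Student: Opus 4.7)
The plan is to prove the corollary in two stages: first verify the $G$-martingale identity on the dense subspace $M_G^{0,2}(0,T)$ of step processes by a direct calculation, and then extend by density using the $\mathbb{L}^2_G$-continuity of both the stochastic integral and the conditional sublinear expectation $\hat{\mathbb{E}}_s$.

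For the first stage, fix $0 \le s \le t \le T$ and $\eta = \sum_{k=0}^{N-1}\xi_k \mathbf{I}_{[t_k,t_{k+1})} \in M_G^{0,2}(0,T)$. After refining the partition I may assume $s = t_m$ and $t = t_M$, so that
\begin{equation*}
\int_0^t \eta(u)\,dB_u - \int_0^s \eta(u)\,dB_u \;=\; \sum_{k=m}^{M-1}\xi_k\bigl(B_{t_{k+1}}-B_{t_k}\bigr).
\end{equation*}
Applying $\hat{\mathbb{E}}_s$ and iterating the tower property $\hat{\mathbb{E}}_{t_k} \circ \hat{\mathbb{E}}_{t_{k+1}} = \hat{\mathbb{E}}_{t_k}$, the martingale identity reduces to the single-step claim
\begin{equation*}
\hat{\mathbb{E}}_{t_k}\bigl[\xi_k(B_{t_{k+1}}-B_{t_k})\bigr] = 0 \qquad \text{for each } k.
\end{equation*}
Since $B_{t_{k+1}}-B_{t_k}$ is $G$-normal and independent of $\mathcal{F}_{t_k}$, the conditional independence identity gives $\hat{\mathbb{E}}_{t_k}[\xi_k(B_{t_{k+1}}-B_{t_k})] = \hat{\mathbb{E}}[x(B_{t_{k+1}}-B_{t_k})]\bigl|_{x=\xi_k}$. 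For fixed $x \in \mathbb{R}$, positive homogeneity together with the two-sided symmetry $\hat{\mathbb{E}}[B_{t_{k+1}}-B_{t_k}] = \hat{\mathbb{E}}[-(B_{t_{k+1}}-B_{t_k})] = 0$ (inherited from the $G$-normal distribution) forces $\hat{\mathbb{E}}[x(B_{t_{k+1}}-B_{t_k})]=0$ regardless of the sign of $x$, which settles the claim.

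For the second stage, given $\eta \in M_G^2(0,T)$, choose $\eta^n \in M_G^{0,2}(0,T)$ with $\|\eta^n - \eta\|_{M_G^2(0,T)} \to 0$. The Itô isometry type estimate for the $G$-stochastic integral yields
\begin{equation*}
\hat{\mathbb{E}}\Bigl[\bigl|\textstyle\int_0^t(\eta^n-\eta)(u)\,dB_u\bigr|^2\Bigr] \;\le\; C\,\|\eta^n-\eta\|_{M_G^2(0,T)}^2 \longrightarrow 0,
\end{equation*}
so $M_t^n := \int_0^t \eta^n\,dB \to M_t := \int_0^t \eta\,dB$ in $\mathbb{L}^2_G(\Omega_t)$, and analogously at time $s$. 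Because $\hat{\mathbb{E}}_s$ is a contraction on $\mathbb{L}^1_G$, passing to the limit in the identity $\hat{\mathbb{E}}_s[M_t^n] = M_s^n$ established in the first stage gives $\hat{\mathbb{E}}_s[M_t] = M_s$, which is the $G$-martingale property.

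The main technical obstacle is the verification $\hat{\mathbb{E}}_{t_k}[\xi_k(B_{t_{k+1}}-B_{t_k})] = 0$ for $\xi_k$ of indefinite sign: the monotonicity and positive homogeneity axioms alone only handle nonnegative multipliers, so the argument genuinely uses both the independence identity (to freeze $\xi_k$ at a deterministic value $x$) and the upper/lower symmetry of the $G$-normal mean (to cover $x<0$). Everything else is a routine density and continuity argument within the $G$-framework already developed in the preliminaries.
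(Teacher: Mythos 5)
The paper states this corollary without proof, importing it as a known preliminary from Peng's $G$-stochastic calculus, so the comparison is against the standard literature argument rather than anything in the text. Your proof is correct and reproduces that standard argument exactly: verification on step processes via the tower property and the identity $\hat{\mathbb{E}}_{t_k}[\xi_k(B_{t_{k+1}}-B_{t_k})]=0$ (which, as you rightly emphasize, needs both the independence/freezing identity and the two-sided vanishing $\hat{\mathbb{E}}[\pm(B_{t_{k+1}}-B_{t_k})]=0$ to handle multipliers of indefinite sign), followed by the usual density passage using the It\^{o}-type isometry and the $L^1_G$-contractivity of $\hat{\mathbb{E}}_s$.
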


\section{$G$-SDEs and $G$-BSDEs}
%Let us first recall some notations.
%$C^{n}(\mathbb{R}),C^{n}_{b,lip}(\mathbb{R})$ will denote respectively the
%set of functions of class $C^{n}$ from $\mathbb{R}$ into $\mathbb{R}$,
% the set of those functions of class $C^{n}$ whose partial
%derivatives of order less than or equal to $n$ are bounded Lipschtiz continuous function (and hence the
%function itself grows at most linearly at infinity).
\subsection{$G$-SDEs}
We make use of the following assumptions on the generator $b,h$ and $\sigma$ of GSDE:
\begin{description}
\item[(H1)] $b,~h_{ij}$ and $\sigma_i$ are given $\mathbb{R}^n$-valued bounded Lipschitz continuous functions defined on $[0,T]\times\mathbb{R}^n$ which satisfy the Lipchitz condition, i.e., there exists some constant $K$ such that $|\varphi(t, x) - \varphi(t, y)| \leq K|x -y|$, for each $t \in [0, T ]$,
$x, y\in\mathbb{R}^n$, $\varphi = b, h_{ij}$ and $\sigma_i$ respectively, $i,j=1,\ldots,d$.
\item[(H2)] $b,~h_{ij}$ and $\sigma_i$ are given $\mathbb{R}^n$-valued bounded Lipschitz continuous functions defined on $\mathbb{R}^n$, i.e., there exists some constant $K$ such that $|\varphi(x) - \varphi(y)| \leq K|x -y|$, for each $x, y\in\mathbb{R}^n$, $\varphi = b, h_{ij}$ and $\sigma_i$ respectively, $i,j=1,\ldots,d$.
\end{description}
We consider the following SDE driven by a $d$-dimensional $G$-Brownian motion:
\begin{align}\label{GSDE}
X(t)=X(0)+\int^t_0b(s,X(s))ds+\int^t_0h_{ij}(s,X(s))d\langle B^i, B^j\rangle_s+\int^t_0\sigma_i(s,X(s))dB^{i}_s,\ t\in[0,T],
\end{align}
where the initial condition $X_0\in\mathbb{R}^n$  is a given constant.
\begin{theorem}
Under the assumption (H1), there exists a unique solution $X\in M^
2_G(0, T )$ of the stochastic differential equation (\ref{GSDE}).
\end{theorem}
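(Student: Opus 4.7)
The plan is to prove existence and uniqueness by a Picard/Banach fixed point argument on the Banach space $M_G^2(0,T)$. Define the map $\Phi : M_G^2(0,T) \to M_G^2(0,T)$ by
$$\Phi(X)(t) = X(0) + \int_0^t b(s, X(s))\,ds + \int_0^t h_{ij}(s, X(s))\,d\langle B^i, B^j\rangle_s + \int_0^t \sigma_i(s, X(s))\,dB^i_s,$$
so that a fixed point of $\Phi$ is precisely a solution of (\ref{GSDE}).

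First I would check that $\Phi$ is well defined. The Lipschitz and boundedness hypotheses in (H1) imply that, whenever $X \in M_G^2(0,T)$, each of the compositions $b(\cdot,X(\cdot))$, $h_{ij}(\cdot,X(\cdot))$, $\sigma_i(\cdot,X(\cdot))$ again lies in $M_G^2(0,T)$ (this uses the linear-growth consequence of Lipschitz plus boundedness, and the fact that $M_G^2(0,T)$ is stable under Lipschitz composition in the spatial variable). The three integrals are then defined: the $ds$ and $d\langle B^i,B^j\rangle_s$ integrals as pathwise Lebesgue–Stieltjes integrals, using that $\langle B^i,B^j\rangle$ has finite variation with density bounded in terms of the sublinear function $G$, and the $dB^i_s$ integral through the $G$-It\^o theory recalled in Section 2.

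Second, for $X,Y \in M_G^2(0,T)$, I would combine Cauchy–Schwarz applied to the two finite-variation integrals with the $G$-It\^o isometry $\hat{\mathbb{E}}\bigl[|\int_0^t \eta_s\,dB^i_s|^2\bigr] \leq C\int_0^t \hat{\mathbb{E}}[|\eta_s|^2]\,ds$ for the stochastic integral, together with sublinearity of $\hat{\mathbb{E}}$ and the Lipschitz constant $K$, to obtain an estimate of the form
$$\hat{\mathbb{E}}[|\Phi(X)(t) - \Phi(Y)(t)|^2] \leq C_1 \int_0^t \hat{\mathbb{E}}[|X(s) - Y(s)|^2]\,ds$$
for a constant $C_1 = C_1(T,K,d,G)$. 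Iterating this inequality $n$ times yields
$$\hat{\mathbb{E}}[|\Phi^n(X)(t) - \Phi^n(Y)(t)|^2] \leq \frac{(C_1 T)^n}{n!}\,\|X-Y\|_{M_G^2(0,T)}^2,$$
so that some iterate $\Phi^n$ is a strict contraction on $M_G^2(0,T)$. The Banach fixed point theorem then produces a unique fixed point $X \in M_G^2(0,T)$, giving simultaneously existence and uniqueness; alternatively, one may split $[0,T]$ into small subintervals, contract on each, and patch.

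The main obstacle, in my view, is not the fixed-point step (which is structurally the same as in the classical Lipschitz SDE setting) but justifying the underlying analytic estimates inside the $G$-framework: because $\hat{\mathbb{E}}$ is only sublinear, one cannot invoke classical Fubini or $L^2$-isometry arguments directly, and must instead rely on the $G$-It\^o isometry, on $M_G^2$-stability under Lipschitz composition, and on the control of the mutual variation $\langle B^i,B^j\rangle$ supplied by $G$. Once these ingredients are in place, the remainder of the argument is a direct transcription of the classical proof.
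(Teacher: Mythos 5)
The paper states this theorem without proof, recalling it from Gao \cite{G} and Peng \cite{Peng4}, whose argument is precisely the Picard/contraction-mapping scheme on $M_G^2(0,T)$ that you describe, so your outline is correct and follows the same route as the source the paper relies on. The two points that genuinely need justification in the $G$-framework --- stability of $M_G^2(0,T)$ under Lipschitz composition in the spatial variable, and the It\^o-type estimate $\hat{\mathbb{E}}\bigl[|\int_0^t \eta_s\,dB^i_s|^2\bigr]\leq C\int_0^t \hat{\mathbb{E}}[|\eta_s|^2]\,ds$ together with the bound $|d\langle B^i,B^j\rangle_s|\leq C\,ds$ --- are exactly the ones you flag, and they are supplied by the theory recalled in Section 2.
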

\subsection{$G$-BSDEs}
In this subsection, we present some notations and results of $G$-BSDEs. More details can be found in \cite{HJPS} and \cite{HJPS1}.
\begin{definition}
\label{def2.6} For fixed $T>0$, let $M_{G}^{0}(0,T)$ be the collection of
processes in the following form: for a given partition $\{t_{0},\cdot
\cdot \cdot,t_{N}\}=\pi_{T}$ of $[0,T]$,
\[
\eta_{t}(\omega)=\sum_{j=0}^{N-1}\xi_{j}I_{[t_{j},t_{j+1})}(t),
\]
where $\xi_{j}\in L_{ip}(\Omega_{t_{j}})$, $j=0,1,2,\cdot \cdot \cdot,N-1$. For
$p\geq1$, we denote by $H_{G}^{p}(0,T)$, $M_{G}^{p}(0,T)$ the completion of
$M_{G}^{0}(0,T)$ under the norms $\Vert \eta \Vert_{H_{G}^{p}}=\{ \mathbb{\hat
{E}}[(\int_{0}^{T}|\eta_{s}|^{2}ds)^{p/2}]\}^{1/p}$, $\Vert \eta \Vert
_{M_{G}^{p}}=\{ \int_{0}^{T}\mathbb{\hat{E}}[|\eta_{s}|^{p}]ds\}^{1/p}$ respectively.
\end{definition}

For each $\eta \in M_{G}^{1}(0,T)$, we can define the integrals $\int_{0}%
^{T}\eta_{t}dt$ and $\int_{0}^{T}\eta_{t}d\langle B^{\mathbf{a}}%
,B^{\mathbf{\bar{a}}}\rangle_{t}$ for each $\mathbf{a}$, $\mathbf{\bar{a}}%
\in \mathbb{R}^{d}$. For each $\eta \in H_{G}^{p}(0,T;\mathbb{R}^{d})$ with
$p\geq1$, we can define It\^{o}'s integral $\int_{0}^{T}\eta_{t}dB_{t}$.

Let $S_{G}^{0}(0,T)=\{h(t,B_{t_{1}\wedge t},\cdot \cdot \cdot,B_{t_{n}\wedge
t}):t_{1},\ldots,t_{n}\in \lbrack0,T],h\in C_{b,Lip}(\mathbb{R}^{n+1})\}$. For
$p\geq1$ and $\eta \in S_{G}^{0}(0,T)$, set $\Vert \eta \Vert_{S_{G}^{p}}=\{
\mathbb{\hat{E}}[\sup_{t\in \lbrack0,T]}|\eta_{t}|^{p}]\}^{\frac{1}{p}}$.
Denote by $S_{G}^{p}(0,T)$ the completion of $S_{G}^{0}(0,T)$ under the norm
$\Vert \cdot \Vert_{S_{G}^{p}}$.

We only consider non-degenerate $G$-normal distribution, i.e.,
\begin{description}
\item[(H3)] There exists some $\underline{\sigma}^{2}>0$ such that $G(A)-G(B)\geq\underline{\sigma}^{2}\mathrm{tr}[A-B]$ for any $A\geq B$.
\end{description}
We consider the following type of $G$-BSDEs (in this paper we always use
Einstein convention):%
\begin{align}
%\begin{split}
Y_{t}  &  =\xi+\int_{t}^{T}f(s,Y_{s},Z_{s})ds+\int_{t}^{T}g_{ij}(s,Y_{s}%
,Z_{s})d\langle B^{i},B^{j}\rangle_{s}\nonumber \\
&  -\int_{t}^{T}Z_{s}dB_{s}-(K_{T}-K_{t}), \label{pr-eq1}%
%\end{split}
\end{align}%
where%

\[
f(t,\omega,y,z),g_{ij}(t,\omega,y,z):[0,T]\times \Omega_{T}\times
\mathbb{R}\times \mathbb{R}^{d}\rightarrow \mathbb{R}%
\]
satisfy the following properties:

\begin{description}
\item[(H4)] There exists some $\beta>1$ such that for any $y,z$,
$f(\cdot,\cdot,y,z),g_{ij}(\cdot,\cdot,y,z)\in M_{G}^{\beta}(0,T)$.

\item[(H5)] There exists some $L>0$ such that
\[
|f(t,y,z)-f(t,y^{\prime},z^{\prime})|+\sum_{i,j=1}^{d}|g_{ij}(t,y,z)-g_{ij}%
(t,y^{\prime},z^{\prime})|\leq L(|y-y^{\prime}|+|z-z^{\prime}|).
\]

\end{description}

For simplicity, we denote by $\mathfrak{S}_{G}^{\alpha}(0,T)$ the collection
of processes $(Y,Z,K)$ such that $Y\in S_{G}^{\alpha}(0,T)$, $Z\in
H_{G}^{\alpha}(0,T;\mathbb{R}^{d})$, $K$ is a decreasing $G$-martingale with
$K_{0}=0$ and $K_{T}\in L_{G}^{\alpha}(\Omega_{T})$.

\begin{definition}
\label{def3.1} Let $\xi \in L_{G}^{\beta}(\Omega_{T})$ and $f$ satisfy (H4) and
(H5) for some $\beta>1$. A triplet of processes $(Y,Z,K)$ is called a solution
of equation (\ref{pr-eq1}) if for some $1<\alpha \leq \beta$ the following
properties hold:

\begin{description}
\item[(a)] $(Y,Z,K)\in \mathfrak{S}_{G}^{\alpha}(0,T)$;

\item[(b)] $Y_{t}=\xi+\int_{t}^{T}f(s,Y_{s},Z_{s})ds+\int_{t}^{T}%
g_{ij}(s,Y_{s},Z_{s})d\langle B^{i},B^{j}\rangle_{s}-\int_{t}^{T}Z_{s}%
dB_{s}-(K_{T}-K_{t})$.
\end{description}
\end{definition}

\begin{theorem}
\label{the1.1} (\cite{HJPS}) Assume that $\xi \in L_{G}^{\beta}(\Omega_{T})$
and $f$, $g_{ij}$ satisfy (H4) and (H5) for some $\beta>1$. Then equation
(\ref{pr-eq1}) has a unique solution $(Y,Z,K)$. Moreover, for any $1<\alpha<\beta$
we have $Y\in S_{G}^{\alpha}(0,T)$, $Z\in H_{G}^{\alpha}(0,T;\mathbb{R}^{d})$
and $K_{T}\in L_{G}^{\alpha}(\Omega_{T})$.
\end{theorem}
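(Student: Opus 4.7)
The strategy is to combine the decomposition theorem for symmetric $G$-martingales with a Picard-type contraction on $\mathfrak{S}_{G}^{\alpha}(0,T)$. First I would handle the \emph{frozen} case where $f$ and $g_{ij}$ do not depend on $(y,z)$. In that case, set
\[
X_t := \hat{\mathbb{E}}_t\!\left[\xi+\int_0^T f(s)\,ds+\int_0^T g_{ij}(s)\,d\langle B^{i},B^{j}\rangle_s\right],
\]
so that $X$ is a symmetric $G$-martingale. Invoking the Peng-type decomposition, one writes $X_t = X_0 + \int_0^t Z_s\,dB_s - K_t$ with $K$ a decreasing $G$-martingale and $K_0=0$. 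Defining
\[
Y_t := X_t - \int_0^t f(s)\,ds - \int_0^t g_{ij}(s)\,d\langle B^{i},B^{j}\rangle_s
\]
gives a triple $(Y,Z,K)$ that solves the frozen equation.

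\textbf{A priori estimates.} For the contraction step I need $L^{\alpha}$-bounds on $(Y,Z,K)$ in terms of the data. Applying the $G$-It\^o formula to $|Y_t|^{\alpha}$ (with appropriate stopping to handle the $1<\alpha<\beta$ range) and performing Young/BDG-type manipulations, I would extract a control of
\[
\hat{\mathbb{E}}\!\left[\sup_{t\leq T}|Y_t|^{\alpha}\right] + \hat{\mathbb{E}}\!\left[\left(\int_0^T |Z_s|^{2}\,ds\right)^{\!\alpha/2}\right] + \hat{\mathbb{E}}[K_T^{\alpha}]
\]
in terms of $\hat{\mathbb{E}}[|\xi|^{\beta}]^{\alpha/\beta}$ and the $M_{G}^{\beta}$-norms of $f(\cdot,0,0), g_{ij}(\cdot,0,0)$. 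The non-degeneracy (H3) is essential here to pass from integrals against $d\langle B^{i},B^{j}\rangle$ back to $|Z|^{2}\,ds$ integrals. The same calculation applied to the difference of two solutions produces a stability estimate.

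\textbf{Contraction and conclusion.} Define the map $\Gamma:(y,z)\mapsto (Y,Z)$ on $S_{G}^{\alpha}(0,T)\times H_{G}^{\alpha}(0,T;\mathbb{R}^{d})$ equipped with a weighted norm
\[
\|(y,z)\|_{\gamma}^{\alpha} := \hat{\mathbb{E}}\!\left[\sup_{t\leq T} e^{\alpha\gamma t}|y_t|^{\alpha}\right] + \hat{\mathbb{E}}\!\left[\left(\int_0^T e^{2\gamma s}|z_s|^{2}\,ds\right)^{\!\alpha/2}\right]
\]
for $\gamma$ large enough. Hypothesis (H5), combined with the stability estimate, turns $\Gamma$ into a strict contraction, and its unique fixed point delivers the desired solution $(Y,Z,K)$; the identity $K_t = X_0 + \int_0^t Z_s\,dB_s - X_t$ then automatically yields $K_T \in L_{G}^{\alpha}(\Omega_T)$. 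The $L^{\beta}$-integrability of $\xi$ together with the a priori estimate finally gives $Y\in S_{G}^{\alpha}$ and $Z\in H_{G}^{\alpha}$ for every $1<\alpha<\beta$.

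\textbf{Main obstacle.} The decisive ingredient is the decomposition of symmetric $G$-martingales as $\int Z\,dB - K$ with a nontrivial decreasing $K$: unlike the classical case, $K$ cannot be discarded and must be tracked in every $\alpha$-th moment estimate, since it contributes a finite-variation (rather than quadratic-variation) term that fights the $Z$-term in the It\^o expansion. Controlling this interaction, and simultaneously using (H3) to convert bracket integrals into $|Z|^{2}\,ds$ integrals on the right side of the estimate, is the technical heart of the proof; once set up, the Picard contraction proceeds along classical lines.
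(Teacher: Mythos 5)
This statement is quoted in the paper from Hu--Ji--Peng--Song \cite{HJPS} and is not proved there, so there is no in-paper proof to compare against; measured against the proof in the cited source, your plan follows essentially the same route: reduce to generators independent of $(y,z)$, represent the frozen solution through the conditional expectation $\hat{\mathbb{E}}_t[\xi+\int_0^T f\,ds+\int_0^T g_{ij}\,d\langle B^i,B^j\rangle_s]$, invoke the $G$-martingale decomposition (due to Song) to extract $\int Z\,dB-K$, derive $\alpha$-th moment a priori and stability estimates via the $G$-It\^{o} formula together with the non-degeneracy of $G$, and close with a Picard iteration. Two remarks. First, a terminological error: the process $X_t=\hat{\mathbb{E}}_t[\zeta]$ is in general \emph{not} a symmetric $G$-martingale; if it were symmetric the decreasing part $K$ would vanish identically, and it is precisely the asymmetry of $\hat{\mathbb{E}}$ that forces a nontrivial $K$ in the decomposition $X_t=X_0+\int_0^t Z_s\,dB_s-K_t$. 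Second, your sketch outsources the two genuinely hard points -- the decomposition theorem itself (valid for $\zeta\in L_G^{\beta}$ with $\beta>1$ and yielding $Z,K$ only in the $\alpha$-norms for $\alpha<\beta$, which is exactly where the strict inequality $\alpha<\beta$ in the statement comes from) and the $\alpha$-th power It\^{o} estimates for $1<\alpha<2$, where $|y|^{\alpha}$ is not $C^{2}$ and must be regularized, and where the difference of two decreasing $G$-martingales is no longer a $G$-martingale, so uniqueness cannot be run by naive linearization. As a plan this is acceptable and faithful to the known argument, but those deferred steps are where the actual work of \cite{HJPS} lies.
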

We consider the following $G$-BSDEs:%
\begin{align*}
Y_{t}^{l,\xi}  &  =\xi+\int_{t}^{T}f^{l}(s,Y_{s}^{l,\xi},Z_{s}^{l,\xi}%
)ds+\int_{t}^{T}g_{ij}^{l}(s,Y_{s}^{l,\xi},Z_{s}^{l,\xi})d\langle B^{i}%
,B^{j}\rangle_{s}\\
&  -\int_{t}^{T}Z_{s}^{l,\xi}dB_{s}-(K_{T}^{l,\xi}-K_{t}^{l,\xi}),\ l=1,2,
\end{align*}
where $g_{ij}^{l}=g_{ji}^{l}$.

He and Hu \cite{HH} generalized the comparison theorem in \cite{HJPS1}.

\begin{proposition}
\label{con-pro1} Let $f^{l}$ and $g_{ij}^{l}$ satisfy (H4) and (H5) for some
$\beta>1$, $l=1,2$. If $f^{2}-f^{1}+2G((g_{ij}^{2}-g_{ij}^{1})_{i,j=1}%
^{d})\leq0$, then for each $\xi \in L_{G}^{\beta}(\Omega_{T})$, we have
$Y_{t}^{1,\xi}\geq Y_{t}^{2,\xi}$ for $t\in \lbrack0,T]$.
\end{proposition}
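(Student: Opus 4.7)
My plan follows the natural reduction strategy: I would reduce the statement to the $G$-BSDE comparison theorem of Hu--Ji--Peng--Song \cite{HJPS1}, which handles the simpler case in which the two $g_{ij}$-coefficients coincide and the $f$-drivers are pointwise ordered. The idea is to re-express the $G$-BSDE satisfied by $(Y^{2,\xi},Z^{2,\xi},K^{2,\xi})$ as a $G$-BSDE having the same solution triple but with $g$-coefficients equal to $g^{1}_{ij}$, by transferring the discrepancy $(g^{2}_{ij}-g^{1}_{ij})$ into the $f$-driver (through precisely the $G$-term that appears in the hypothesis) and into the decreasing-$G$-martingale component.

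The key $G$-calculus tool is that, for any symmetric-matrix-valued adapted process $\gamma=(\gamma_{ij})\in M_{G}^{\beta}(0,T)$, the process
\[
M^{\gamma}_{t}:=\int_{0}^{t}\gamma_{ij}(s)\,d\langle B^{i},B^{j}\rangle_{s}-2\int_{0}^{t}G(\gamma(s))\,ds,\qquad t\in[0,T],
\]
is a non-increasing $G$-martingale with $M^{\gamma}_{0}=0$; pathwise this uses the fact that the density $d\langle B\rangle_{s}/ds$ lies quasi-surely in the $G$-uncertainty set, so $\gamma_{ij}(s)\,d\langle B^{i},B^{j}\rangle_{s}\leq 2G(\gamma(s))\,ds$. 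I would apply this with $\gamma_{ij}(s):=(g^{2}_{ij}-g^{1}_{ij})(s,Y^{2,\xi}_{s},Z^{2,\xi}_{s})$ and set $\tilde K^{2,\xi}_{t}:=K^{2,\xi}_{t}+M^{\gamma}_{t}$. A direct substitution then shows that $(Y^{2,\xi},Z^{2,\xi},\tilde K^{2,\xi})$ satisfies
\begin{align*}
Y^{2,\xi}_{t}=\xi&+\int_{t}^{T}\tilde f^{2}(s,Y^{2,\xi}_{s},Z^{2,\xi}_{s})\,ds+\int_{t}^{T}g^{1}_{ij}(s,Y^{2,\xi}_{s},Z^{2,\xi}_{s})\,d\langle B^{i},B^{j}\rangle_{s}\\
&-\int_{t}^{T}Z^{2,\xi}_{s}\,dB_{s}-(\tilde K^{2,\xi}_{T}-\tilde K^{2,\xi}_{t}),
\end{align*}
where $\tilde f^{2}(s,y,z):=f^{2}(s,y,z)+2G\bigl((g^{2}_{ij}-g^{1}_{ij})(s,y,z)\bigr)$. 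Because $G$ is sublinear and Lipschitz and the $g^{l}_{ij}$ obey (H4)--(H5), so does $\tilde f^{2}$, hence by Theorem \ref{the1.1} the rewritten equation is a genuine $G$-BSDE with $(Y^{2,\xi},Z^{2,\xi},\tilde K^{2,\xi})$ as its unique solution.

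The hypothesis $f^{2}-f^{1}+2G(g^{2}-g^{1})\leq 0$ now reads exactly $\tilde f^{2}\leq f^{1}$ pointwise. Thus $Y^{1,\xi}$ and $Y^{2,\xi}$ solve two $G$-BSDEs with common terminal value $\xi$, identical $g$-coefficients $g^{1}_{ij}$, and drivers satisfying $f^{1}\geq\tilde f^{2}$. Applying the HJPS1 comparison theorem (the base case with coinciding $g$-coefficients) yields $Y^{1,\xi}_{t}\geq Y^{2,\xi}_{t}$ for all $t\in[0,T]$.

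The main technical obstacle lies in legitimising the auxiliary process $\tilde K^{2,\xi}=K^{2,\xi}+M^{\gamma}$ as a decreasing $G$-martingale in the sense required by Definition \ref{def3.1}: pathwise monotonicity is immediate from the pathwise bound on $M^{\gamma}$, but the $G$-martingale and integrability ($L_{G}^{\alpha}$) properties of the sum require care, since sublinearity of $\hat{\mathbb{E}}$ only gives a $G$-supermartingale inequality in general. This is handled by approximating $\gamma$ with simple processes in $M_{G}^{0,\beta}$, using the constant-matrix identity $\hat{\mathbb{E}}[\gamma_{ij}\langle B^{i},B^{j}\rangle_{t}]=2G(\gamma)t$ as the base case, and appealing to the absorbability of non-increasing $G$-martingale increments into the canonical decreasing component of a $G$-BSDE as developed in \cite{HJPS}.
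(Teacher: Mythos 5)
First, note that the paper does not prove Proposition \ref{con-pro1} at all: it is quoted from He and Hu \cite{HH}, who generalize the comparison theorem of \cite{HJPS1}, so there is no in-paper proof to compare against. Your overall strategy --- trading the discrepancy $g^{2}_{ij}-g^{1}_{ij}$ for the term $2G(g^{2}-g^{1})$ in the driver via the non-increasing $G$-martingale $M^{\gamma}_{t}=\int_{0}^{t}\gamma_{ij}\,d\langle B^{i},B^{j}\rangle_{s}-2\int_{0}^{t}G(\gamma(s))\,ds$, then invoking the equal-$g$ comparison theorem --- is indeed the standard route to this generalization. But the reduction as written contains a sign error that breaks the key step. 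Substituting $g^{2}=g^{1}+\gamma$ into the second equation gives $\int_{t}^{T}\gamma_{ij}\,d\langle B^{i},B^{j}\rangle_{s}=2\int_{t}^{T}G(\gamma(s))\,ds+(M^{\gamma}_{T}-M^{\gamma}_{t})$, and combining the resulting $+(M^{\gamma}_{T}-M^{\gamma}_{t})$ with $-(K^{2,\xi}_{T}-K^{2,\xi}_{t})$ forces $\tilde K^{2,\xi}=K^{2,\xi}-M^{\gamma}$, not $K^{2,\xi}+M^{\gamma}$. Since $K^{2,\xi}$ and $M^{\gamma}$ are both non-increasing, their difference is in general not monotone (take $K^{2,\xi}\equiv 0$), so $(Y^{2,\xi},Z^{2,\xi},\tilde K^{2,\xi})$ is not a solution in the sense of Definition \ref{def3.1} and the appeal to uniqueness in Theorem \ref{the1.1} collapses; in truth $Y^{2,\xi}$ is only a subsolution of the rewritten equation. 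The approximation argument sketched in your last paragraph cannot repair this: no limiting procedure makes $K^{2,\xi}-M^{\gamma}$ non-increasing.

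The fix is to perform the substitution on the \emph{first} equation instead: writing $g^{1}=g^{2}-\gamma$ with $\gamma_{ij}(s)=(g^{2}_{ij}-g^{1}_{ij})(s,Y^{1,\xi}_{s},Z^{1,\xi}_{s})$ yields the driver $\tilde f^{1}=f^{1}-2G(g^{2}-g^{1})\geq f^{2}$ (exactly the hypothesis), $g$-coefficients $g^{2}_{ij}$, and the correction $\tilde K^{1,\xi}=K^{1,\xi}+M^{\gamma}$, a \emph{sum} of two non-increasing $G$-martingales. This is non-increasing pathwise, but --- as you rightly worry at the end --- sublinearity of $\hat{\mathbb{E}}$ only gives the supermartingale inequality for a sum; you must invoke the lemma of \cite{HJPS1} that the sum of two non-increasing $G$-martingales with terminal values in $L_{G}^{\alpha}(\Omega_{T})$, $\alpha>1$, is again a non-increasing $G$-martingale (a nontrivial fact resting on the $G$-martingale decomposition theorem), together with the fact from \cite{HJPS} that $M^{\gamma}$ itself is a non-increasing $G$-martingale for $\gamma\in M_{G}^{\beta}(0,T;\mathbb{S}(d))$. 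With those two ingredients the corrected reduction to the equal-$g$ comparison theorem goes through.
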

\subsection{Nonlinear Feynman-Kac Formula}

In this subsection, we give the nonlinear Feynman-Kac Formula which was studied
in Peng \cite{P10} for special type of $G$-BSDEs. We
consider the following type of $G$-FBSDEs:%
\begin{equation}
dX_{s}^{t,\xi}=b(s,X_{s}^{t,\xi})ds+h_{ij}(s,X_{s}^{t,\xi})d\langle
B^{i},B^{j}\rangle_{s}+\sigma_{j}(s,X_{s}^{t,\xi})dB_{s}^{j},\ X_{t}^{t,\xi
}=\xi, \label{App1}%
\end{equation}

\begin{align}
Y_{s}^{t,\xi}  &  =\Phi(X_{T}^{t,\xi})+\int_{s}^{T}f(r,X_{r}^{t,\xi}%
,Y_{r}^{t,\xi},Z_{r}^{t,\xi})dr+\int_{s}^{T}g_{ij}(r,X_{r}^{t,\xi}%
,Y_{r}^{t,\xi},Z_{r}^{t,\xi})d\langle B^{i},B^{j}\rangle_{r}\nonumber\\
&  -\int_{s}^{T}Z_{r}^{t,\xi}dB_{r}-(K_{T}^{t,\xi}-K_{s}^{t,\xi}),
\label{App2}%
\end{align}
where $b$, $h_{ij}$, $\sigma_{j}:[0,T]\times\mathbb{R}^{n}\rightarrow
\mathbb{R}^{n}$, $\Phi:\mathbb{R}^{n}\rightarrow\mathbb{R}$, $f$, $g_{ij}:$
$[0,T]\times\mathbb{R}^{n}\times\mathbb{R}\times\mathbb{R}^{d}\rightarrow
\mathbb{R}$ are deterministic functions and satisfy the following conditions:

\begin{description}
\item[(A1)] $h_{ij}=h_{ji}$ and $g_{ij}=g_{ji}$ for $1\leq i,j\leq d$;

\item[(A2)] $b$, $h_{ij}$, $\sigma_{j}$, $f$, $g_{ij}$ are continuous in $t$;

\item[(A3)] There exist a positive integer $m$ and a constant $L>0$ such that%
\[
|b(t,x)-b(t,x^{\prime})|+\sum_{i,j=1}^{d}|h_{ij}(t,x)-h_{ij}(t,x^{\prime
})|+\sum_{j=1}^{d}|\sigma_{j}(t,x)-\sigma_{j}(t,x^{\prime})|\leq
L|x-x^{\prime}|,
\]%
\[
|\Phi(x)-\Phi(x^{\prime})|\leq L(1+|x|^{m}+|x^{\prime}|^{m})|x-x^{\prime}|,
\]
\begin{align*}
&  |f(t,x,y,z)-f(t,x^{\prime},y^{\prime},z^{\prime})|+\sum_{i,j=1}^{d}%
|g_{ij}(t,x,y,z)-g_{ij}(t,x^{\prime},y^{\prime},z^{\prime})|\\
&  \leq L[(1+|x|^{m}+|x^{\prime}|^{m})|x-x^{\prime}|+|y-y^{\prime
}|+|z-z^{\prime}|].
\end{align*}

\end{description}

We define%
\[
u(t,x):=Y_{t}^{t,x},\ \ (t,x)\in\lbrack0,T]\times\mathbb{R}^{n}.
\]
\begin{remark}
\label{remA.3} It is important to note that $u(t,x)$ is a deterministic
function of $(t,x)$, because $b$, $h_{ij}$, $\sigma_{j}$, $\Phi$, $f$,
$g_{ij}$ are deterministic functions and $\tilde{B}_{s}:=B_{t+s}-B_{t}$ is a
$G$-Brownian motion.
\end{remark}

We now give the Feynman-Kac formula.

\begin{theorem}
\label{theA.9} Let $u(t,x):=Y_{t}^{t,x}$ for $(t,x)\in\lbrack0,T]\times
\mathbb{R}^{n}$. Then $u(t,x)$ is the unique viscosity solution of the
following PDE:%
\begin{equation}
\left\{
\begin{array}
[c]{l}%
\partial_{t}u+F(D_{x}^{2}u,D_{x}u,u,x,t)=0,\\
u(T,x)=\Phi(x),
\end{array}
\right.  \label{feynman}%
\end{equation}
where%
\begin{align*}
F(D_{x}^{2}u,D_{x}u,u,x,t)=  &  G(H(D_{x}^{2}u,D_{x}u,u,x,t))+\langle
b(t,x),D_{x}u\rangle\\
&  +f(t,x,u,\langle\sigma_{1}(t,x),D_{x}u\rangle,\ldots,\langle\sigma
_{d}(t,x),D_{x}u\rangle),
\end{align*}%
\begin{align*}
H_{ij}(D_{x}^{2}u,D_{x}u,u,x,t)=  &  \langle D_{x}^{2}u\sigma_{i}%
(t,x),\sigma_{j}(t,x)\rangle+2\langle D_{x}u,h_{ij}(t,x)\rangle\\
&  +2g_{ij}(t,x,u,\langle\sigma_{1}(t,x),D_{x}u\rangle,\ldots,\langle
\sigma_{d}(t,x),D_{x}u\rangle).
\end{align*}

\end{theorem}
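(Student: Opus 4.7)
The plan is to carry out Peng's nonlinear Feynman--Kac program \cite{P10} using the $G$-BSDE theory of \cite{HJPS} and the strengthened comparison Proposition \ref{con-pro1}. The argument has four ingredients: (i) continuity of $u$; (ii) a Markovian dynamic programming identity; (iii) the viscosity sub/super-solution inequalities; and (iv) uniqueness via a PDE comparison principle.

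For (i), the standard $G$-SDE estimates $\hat{\mathbb{E}}[\sup_{r\in[t,T]}|X_{r}^{t,x}-X_{r}^{t,x'}|^{2}]\leq C|x-x'|^{2}$ and $\hat{\mathbb{E}}[\sup_{r\in[t,s]}|X_{r}^{t,x}-x|^{2}]\leq C(1+|x|^{2})(s-t)$, combined with the a priori $G$-BSDE estimates of \cite{HJPS} and the polynomial growth in (A3), give $u\in C([0,T]\times\mathbb{R}^{n})$ with polynomial-Lipschitz growth in $x$ and $\tfrac12$-H\"older regularity in $t$; Remark \ref{remA.3} already identifies $u$ with a deterministic function. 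For (ii), $Y_{s}^{t,x}=u(s,X_{s}^{t,x})$ q.s.\ for $t\leq s\leq T$ follows from uniqueness of the $G$-FBSDE, the flow property $X_{r}^{s,X_{s}^{t,x}}=X_{r}^{t,x}$, and the fact that $(B_{t+\cdot}-B_{t})$ is itself a $G$-Brownian motion independent of $\mathcal{F}_{t}$; the passage from deterministic to random initial data proceeds by approximating $X_{s}^{t,x}$ with simple variables $\sum_{i}x_{i}\mathbf{1}_{A_{i}}$ and invoking the continuity from~(i).

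For (iii), fix $(t,x)$ and a test function $\varphi\in C^{1,2}$ with $\varphi\geq u$ and $\varphi(t,x)=u(t,x)$. On $[t,t+\delta]$ apply the $G$-It\^o formula to $\varphi(s,X_{s}^{t,x})$ to obtain its semimartingale decomposition with $ds$-coefficient $\partial_{t}\varphi+\langle b,D_{x}\varphi\rangle$, $d\langle B^{i},B^{j}\rangle$-coefficient $\langle D_{x}\varphi,h_{ij}\rangle+\tfrac{1}{2}\langle D_{x}^{2}\varphi\,\sigma_{i},\sigma_{j}\rangle$, and $dB^{j}$-integrand $\langle\sigma_{j},D_{x}\varphi\rangle$. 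Let $(\tilde{Y},\tilde{Z},\tilde{K})$ be the $G$-BSDE on $[t,t+\delta]$ with terminal $\varphi(t+\delta,X_{t+\delta}^{t,x})$ and the original drivers $(f,g_{ij})$; step (ii) together with $\varphi\geq u$ give $\tilde{Y}_{t}\geq Y_{t}^{t,x}=\varphi(t,x)$. Comparing $\tilde Y$ with the semimartingale representation of $\varphi(\cdot,X_{\cdot}^{t,x})$ via Proposition \ref{con-pro1}, the positive homogeneity and sublinearity of $G$ collapse the pointwise inequality on the $d\langle B^{i},B^{j}\rangle$-coefficients precisely into a $G(H(\cdot))$ term; dividing by $\delta$ and letting $\delta\downarrow 0$ yields $\partial_{t}\varphi(t,x)+F(D_{x}^{2}\varphi,D_{x}\varphi,\varphi,x,t)\geq 0$. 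The supersolution inequality is symmetric. For (iv), the PDE \eqref{feynman} is fully nonlinear parabolic with $F$ degenerate elliptic by (H3) and monotonicity of $G$ and Lipschitz with polynomial growth in its lower-order arguments by (A3), so the Crandall--Ishii--Lions comparison principle in the $G$-tailored form of Peng \cite{P10} applies.

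The main obstacle is (iii): because of the decreasing $G$-martingale $K$ and the nonlinearity of $\hat{\mathbb{E}}$, the classical ``condition and divide by $\delta$'' route is unavailable, so one must route the argument through Proposition \ref{con-pro1} in such a way that the sublinearity of $G$ is exactly what synthesizes the nonlinear operator $G(H(\cdot))$ out of the BSDE driver comparison---precisely the role played by the strengthened comparison theorem of \cite{HH}.
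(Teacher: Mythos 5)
You should first be aware that the paper does not prove this statement at all: Theorem \ref{theA.9} sits in the preliminaries (Section 3.3) and is recalled verbatim from Peng \cite{P10} and Hu--Ji--Peng--Song \cite{HJPS1}, where the full proof is given; the authors only use it later (Theorem \ref{5.10} and Section 6). So there is no in-paper argument to compare yours against. Measured against the proof in the cited source, your four-step sketch (continuity of $u$, the Markovian identity $Y_{s}^{t,x}=u(s,X_{s}^{t,x})$, the viscosity inequalities via the $G$-It\^{o} formula plus the strengthened comparison Proposition \ref{con-pro1}, and uniqueness via the comparison principle from Appendix C of \cite{Peng4}) is the right strategy and is essentially the argument of \cite{HJPS1}.

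Two places where your sketch is glib about genuinely delicate points. First, the claim that ``the supersolution inequality is symmetric'' is not literally true: the hypothesis of Proposition \ref{con-pro1} is $f^{2}-f^{1}+2G((g_{ij}^{2}-g_{ij}^{1}))\leq 0$, which is not invariant under swapping the two equations because $G$ is sublinear rather than linear. For the subsolution direction you compare $\varphi(\cdot,X_{\cdot}^{t,x})$ against $\tilde{Y}$ in one order, for the supersolution direction in the other, and the two verifications involve $G(A-B)$ versus $-G(B-A)$; they only reconcile in the limit $\delta\downarrow 0$ because both collapse to the same value $G(H(\cdot))$ at the touching point. Second, the passage from deterministic to random initial data via $\xi=\sum_{i}x_{i}\mathbf{1}_{A_{i}}$ requires care in the $G$-framework, since indicator functions of arbitrary Borel sets are generally not quasi-continuous and hence not elements of $L_{G}^{p}$; the cited reference circumvents this with a careful choice of partition and density of Lipschitz cylinder functions. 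Neither issue invalidates your outline, but both are the points at which a blind reconstruction would actually have to do work.
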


\section{Comparison Theorem for Multidimensional GSDEs}
In the classical framework, comparison theorem for stochastic differential equations are well studied (see \cite{A}, \cite{GD}, \cite{Ge}, \cite{GS}, \cite{LL} and etc.). In particular, Gei{\ss} and Manthey (\cite{Ge}) obtained a comparison theorem for multidimensional SDEs. We also refer to Lin (\cite{LQ}) and Luo and Wang (\cite{LW}) for the comparison theorem of 1-dimensional $G$-SDEs. In this paper, we follow the idea of their proof to get our results. We consider the following SDEs driven by a $d$-dimensional $G$-Brownian motion:
\begin{equation*}\label{GSDE1}
X(t)=X(0)+\int^t_0b(s,X(s))ds+\int^t_0h_{ij}(s,X(s))d\langle B^i, B^j\rangle_s+\int^t_0\sigma_i(s,X(s))dB^{i}_s,\ t\in[0,T]
\end{equation*}
and
\begin{equation*}\label{GSDE2}
Y(t)=Y(0)+\int^t_0\bar{b}(s,Y(s))ds+\int^t_0\bar{h}_{ij}(s,Y(s))d\langle B^i, B^j\rangle_s+\int^t_0\sigma_i(s,Y(s))dB^{i}_s,\ t\in[0,T],
\end{equation*}
where the initial conditions $X(0),~Y(0)\in\mathbb{R}^n$  are given constants together with
\begin{equation*}
X_k(0)\leq Y_k(0),~~k=1,\ldots,n.
\end{equation*}

We now give a comparison theorem for multidimensional $G$-SDEs.
\begin{theorem}\label{com}
Suppose that the following two conditions hold.
\begin{description}
\item[(B1)] For any $t\in [0,T]$, and $i=1,\ldots, n,$ the inequality
\begin{equation*}
b_i(t,x)-\bar{b}_i(t,y)+G([(h_{lk})_i+(h_{kl})_i]_{l,k=1}^{d}(t,x)-[(\bar{h}_{lk})_i+(\bar{h}_{kl})_i]_{l,k=1}^{d}(t,y))\leq 0
\end{equation*}
are fulfilled, whenever $x_i=y_i$ and $x_j\leq y_j$ for all $j\neq i$.
\item[(B2)] $b,~h_{ij},~\sigma_i$ and $\bar{b},~\bar{h}_{ij},~\sigma_i$ satisfy (H1) and $(\sigma_i)_k$ depends only on $x_k$, for each $k=1,\ldots,n$, $i,j=1,\ldots,d,$ i.e.,
\begin{equation*}
|(\sigma_i)_k(t,x)-(\sigma_i)_k(t,y)|\leq K|x_k-y_k|
\end{equation*}
for all $t\in [0,T]$, $x,y\in\mathbb{R}^n$.\\
\end{description}
Then for all $t\in [0,T]$,
\begin{equation*}
X_k(t)\leq Y_k(t)~~k=1,\ldots,n~~~\textit{q.s.}
\end{equation*}
\end{theorem}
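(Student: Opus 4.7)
The plan is to control the nonnegative quantity $V_n(t):=\sum_{k=1}^{n}\psi_n(\Delta_k(t))$ with $\Delta_k(t):=X_k(t)-Y_k(t)$, where $\psi_n\in C^2(\mathbb{R})$ is a Yamada--Watanabe-type approximation of $x\mapsto (x^+)^2$: nonnegative, convex, vanishing on $(-\infty,0]$, with $\psi_n\uparrow(x^+)^2$, $\psi_n'\uparrow 2x^+$, and satisfying the structural estimates $(\psi_n'(x))^2\le 4\psi_n(x)$, $\psi_n'(x)\,x\le 2\psi_n(x)+\varepsilon_n$, $\psi_n''(x)\,x^2\le 2\psi_n(x)+\varepsilon_n$ with $\varepsilon_n\downarrow 0$. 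Since $X_k(0)\le Y_k(0)$, we have $V_n(0)=0$. The first step is to apply the $G$-It\^o formula to each $\psi_n(\Delta_k(t))$, sum over $k$, and take $\hat{\mathbb{E}}$; the $dB^i_s$ stochastic integral drops out by the $G$-martingale property, leaving only $ds$ and $d\langle B^i,B^j\rangle_s$ contributions coming from the drift/$h$-part plus a $\psi_n''(\Delta_k)$-weighted diffusion quadratic-variation term.

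The crux is to bring (B1) into play at sample points where $\Delta_k(s)>0$ even though $X_k(s)\neq Y_k(s)$ there. I introduce the auxiliary ``lift'' $\tilde Y^{(k)}(s)$ with $\tilde Y^{(k)}_k(s)=X_k(s)$ and $\tilde Y^{(k)}_j(s)=X_j(s)\vee Y_j(s)$ for $j\neq k$; by construction the pair $(X(s),\tilde Y^{(k)}(s))$ always satisfies the hypothesis of (B1) with index $i=k$. After symmetrising the $h$-integrand in $(l,m)$ and invoking the quasi-sure pointwise inequality $A_{lm}\,d\langle B^l,B^m\rangle_s\le 2G(A)\,ds$ for adapted symmetric matrices $A$, together with the positive homogeneity of $G$, the first-order contribution associated with coordinate $k$ becomes
\begin{equation*}
\psi_n'(\Delta_k)\Bigl[b_k(s,X)-\bar b_k(s,\tilde Y^{(k)})+G\bigl([(h_{lm})_k+(h_{ml})_k](s,X)-[(\bar h_{lm})_k+(\bar h_{ml})_k](s,\tilde Y^{(k)})\bigr)\Bigr]\,ds,
\end{equation*}
which is nonpositive by (B1), plus a remainder $\psi_n'(\Delta_k)\,R_k(s)\,ds$ estimated via the Lipschitz continuity of $\bar b,\bar h$ and the sublinearity (hence Lipschitz continuity) of $G$ by $|R_k(s)|\le CK|\tilde Y^{(k)}(s)-Y(s)|\le CK(\Delta_k^+ +\sum_{j\neq k}\Delta_j^+)$.

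For the second-order term, (B2) provides $|(\sigma_i)_k(s,X)-(\sigma_i)_k(s,Y)|\le K|\Delta_k|$, so writing the integrand as an outer product and applying the same $G$-bound yields a contribution dominated by $C\psi_n''(\Delta_k)\Delta_k^2\,ds\le C\psi_n(\Delta_k)\,ds+C\varepsilon_n$. Combining everything and eliminating cross products $\psi_n'(\Delta_k)\Delta_j^+$ via Young's inequality and $(\psi_n')^2\le 4\psi_n$ produces
\begin{equation*}
\hat{\mathbb{E}}[V_n(t)]\le C\int_0^t\hat{\mathbb{E}}[V_n(s)]\,ds+\tilde\varepsilon_n,
\end{equation*}
with $\tilde\varepsilon_n\to 0$. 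Gronwall's inequality then gives $\hat{\mathbb{E}}[V_n(t)]\le \tilde\varepsilon_n e^{Ct}$; sending $n\to\infty$ forces $\hat{\mathbb{E}}[\sum_k((\Delta_k(t))^+)^2]=0$, hence $X_k(t)\le Y_k(t)$ quasi-surely for each fixed $t$. The quasi-sure continuity of the sample paths of the $G$-SDE then upgrades this pointwise-in-$t$ conclusion to the uniform-in-$t$ statement of the theorem.

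The main obstacle I anticipate is verifying that the Lipschitz error produced by swapping $Y(s)$ for the lift $\tilde Y^{(k)}(s)$ inside \emph{both} $\bar b$ and the $G$-functional can be genuinely reabsorbed into $C\sum_k\psi_n(\Delta_k)$ once summed over $k$; the subtle point is that the $k$-th coordinate of $\tilde Y^{(k)}-Y$ equals $\Delta_k$ itself rather than $\Delta_k^+$, so one must lean on $\psi_n'(\Delta_k)\,\Delta_k\le 2\psi_n(\Delta_k)+\varepsilon_n$ and the Lipschitz continuity of $G$ quite carefully. Once this absorption is carried out, the rest is a standard $G$-It\^o plus Gronwall routine.
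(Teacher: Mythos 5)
Your argument is correct in substance, but it follows a genuinely different route from the paper's. The paper localizes at the first crossing: it sets $\tau_j=\inf\{t:X_j(t)>Y_j(t)\}\wedge T$, $\tau=\min_j\tau_j$, first proves the result under a \emph{strict} version (B1') of (B1), uses continuity to produce an auxiliary stopping time $\kappa>\tau$ on which the strict inequality persists at the pair $(X(s),(Y_1,\ldots,X_j,\ldots,Y_n)(s))$, runs a Yamada--Watanabe approximation of $x^{+}$ (with $\rho(x)=Kx$) plus Gronwall on $[\tau,\kappa]$ to contradict $c(\{\tau<T\})>0$, and finally recovers (B1) by perturbing $b$ to $b-\epsilon$ and passing to the limit via an a priori $L^2$ stability estimate. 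You instead work globally on $[0,T]$: your lift $\tilde Y^{(k)}$ with $\tilde Y^{(k)}_j=X_j\vee Y_j$ ($j\neq k$) makes the hypothesis of (B1) hold identically along the paths, so you need neither the strictness detour, nor the stopping times $\tau,\kappa$, nor the $\epsilon$-perturbation; the price is the Lipschitz remainder $\psi_n'(\Delta_k)\bigl(|\Delta_k|+\sum_{j\neq k}\Delta_j^{+}\bigr)$, whose diagonal part is killed by the support of $\psi_n'$ exactly as you say, and whose cross terms force the quadratic test function and the summation over $k$ before Gronwall. Both proofs rest on the same two $G$-specific facts (the q.s.\ domination $A_{lm}\,d\langle B^l,B^m\rangle_s\le 2G(A)\,ds$, which is how the paper's decreasing process $V_t$ arises, and the vanishing of $\hat{\mathbb{E}}$ on symmetric stochastic integrals), so the nonlinear-expectation content is identical. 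One point to make explicit when you write this up: since $\psi_n\uparrow(x^{+})^2$ from below, the cross term leaves you with $(\Delta_j^{+})^2$ rather than $\psi_n(\Delta_j)$ on the right-hand side, so you should either pass to the limit $n\to\infty$ (using upward monotone convergence of $\hat{\mathbb{E}}=\sup_P E_P$ and the finiteness of second moments) \emph{before} applying Gronwall to $W(t)=\hat{\mathbb{E}}[\sum_k(\Delta_k^{+}(t))^2]$, or quantify $(x^{+})^2-\psi_n(x)$ uniformly; with that ordering fixed, the absorption you worry about goes through.
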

\begin{proof}
We first proof the theorem under the following condition (B1') instead of (B1).
\begin{description}
\item[(B1')] For any $t\in [0,T]$ and $i=1,\ldots, n$ the inequality
\begin{equation*}
b_i(t,x)-\bar{b}_i(t,y)+G([(h_{lk})_i+(h_{kl})_i]_{l,k=1}^{d}(t,x)-[(\bar{h}_{lk})_i+(\bar{h}_{kl})_i]_{l,k=1}^{d}(t,y))< 0
\end{equation*}
are fulfilled, whenever $x_i=y_i$ and $x_j\leq y_j$ for all $j\neq i$.
\end{description}
Define the stopping times
\begin{equation*}
\tau_j=\inf\{t>0: X_j(t)>Y_j(t)\}\wedge T,~~j=1,\ldots,n
\end{equation*}
and
\begin{equation*}
\tau:=\tau_1\wedge\cdots\wedge\tau_n.
\end{equation*}
Obviously, $X_j(\tau_j)=Y_j(\tau_j)$ and $X_j(\tau)\leq Y_j(\tau)$, $j=1,\ldots,n.$ Because of condition (B1'), the continuity of $b,~h_{ij},~\bar{b},~\bar{h}_{ij},$ $i,j=1,2,\ldots,d,$ and the continuity of $X$ and $Y$ there exists a stopping time $T\geq\kappa>\tau$ q.s. defined on $\{\tau<T\}$ such that
\begin{equation}\label{leq}
\begin{split}
&b_j(t,X(s))-\bar{b}_j(t,Y_1(s),\ldots,Y_{j-1}(s),X_j(s),Y_{j+1}(s),\ldots,Y_{d}(s))+G([(h_{lk})_j+(h_{kl})_j)]_{l,k=1}^{d}(t,X(s))\\
&-[(\bar{h}_{lk})_j+(\bar{h}_{kl})_j]_{l,k=1}^{d}(t,Y_1(s),\ldots,Y_{j-1}(s),X_j(s),Y_{j+1}(s),\ldots,Y_{d}(s)))< 0
\end{split}
\end{equation}
on $\{\tau_j=\tau<T\}$ for all $s\in [\tau,\kappa]$ q.s.. Actually, we can define
\begin{align*}
\kappa_1:= \inf\{t>\tau:&b_j(t,X(s))-\bar{b}_j(t,Y_1(s),\ldots,Y_{j-1}(s),X_j(s),Y_{j+1}(s),\ldots,Y_{d}(s))\\
&+G([(h_{lk})_j+(h_{kl})_j)]_{l,k=1}^{d}(t,X(s))\\
&-[(\bar{h}_{lk})_j+(\bar{h}_{kl})_j]_{l,k=1}^{d}(t,Y_1(s),\ldots,Y_{j-1}(s),X_j(s),Y_{j+1}(s),\ldots,Y_{d}(s)))> 0\}\wedge T,
\end{align*}
then we take $\kappa=\frac{\tau+\kappa_1}{2}$. It is easy to check that $\kappa$ satisfies the above condition.

 Now we define $\rho(x)=Kx$ for $x\geq 0$. Then $\int_{0^+}\rho^{-2}(u)du=\infty$. Hence there exists a strictly increasing sequence $\{a_n\}^{\infty}_{n=0}$ such that $a_0=1$, $\lim\limits_{n\rightarrow\infty}a_n=0$ (actually, $a_n=\frac{2}{n(n+1)K^2+2}$) and
\begin{equation*}
\int_{a_n}^{a_{n-1}}\rho^{-2}(u)du=n,~~~\text{for all}~n\geq 1.
\end{equation*}
Let $\psi_n$ be a continuous function such that its support is contained in $(a_n,a_{n-1})$, $0\leq\psi_n(u)\leq 2\rho^{-2}(u)n^{-1}$ and $\int_{a_n}^{a_{n-1}}\psi_n(u)du=1$.
Put
\begin{equation*}
\varphi_n(x)=\left\{
\begin{aligned}
&0,\ \ \ \ \ \ \ \ \ \ \ \ \ \ \ \ \ \ \ \ \ \ \ \ \ x\leq 0,\\
&\int_{0}^{x}\int_{0}^{y}\psi_n(u)dudy,\ \ x>0.
\end{aligned}
\right.
\end{equation*}
One can easily see that $\varphi_n$  is twice continuously differentiable, $\varphi_n(0)=0$ for $x\leq 0$, $0\leq\varphi^{\prime}_{n}(x)\leq 1$ and $\varphi_n(x)\uparrow x^+$ as $n\rightarrow\infty.$
%we have
%\begin{align*}
%X_k(t)-Y_k(t)=&X_k(0)-Y_k(0)+\int_{0}^{t}[b_k(s,X(s))-\bar{b}_k(s,Y(s))]ds\\
%&+\int_{0}^{t}[((h_{ij})_k(s,X(s))-\bar{h}_{ij})_k(s,Y(s))]d\langle B^i, B^j\rangle _s\\
%&+\int_{0}^{t}[(\sigma_i)_k(s,X(s))-(\sigma_i)_k(s,Y(s))]dB^{i}_s
%\end{align*}
Assume
\begin{equation}\label{ass}
c(\{\tau<T\})>0.
\end{equation}
An application of $G$-It\^{o}'s formula (see \cite{LP}) yields
\begin{align*}
&\varphi_n(X_k((\tau+t)\wedge\kappa)-Y_k((\tau+t)\wedge\kappa))\\
=&\varphi_n(X_k(\tau)-Y_k(\tau))\\
&+\int_{\tau}^{(\tau+t)\wedge\kappa}\varphi_{n}^{\prime}(X_k(s)-Y_k(s))[b_k(s,X(s))-\bar{b}_k(s,Y(s))]ds\\
&+\int_{\tau}^{(\tau+t)\wedge\kappa}\varphi_{n}^{\prime}(X_k(s)-Y_k(s))[(h_{ij})_k(s,X(s))-(\bar{h}_{ij})_k(s,Y(s))]d\langle B^i, B^j\rangle _s\\
&+\int_{\tau}^{(\tau+t)\wedge\kappa}\varphi_{n}^{\prime}(X_k(s)-Y_k(s))[(\sigma_i)_k(s,X(s))-(\sigma_i)_k(s,Y(s))]dB^{i}_s\\
&+\frac{1}{2}\int_{\tau}^{(\tau+t)\wedge\kappa}\varphi_{n}^{''}(X_k(s)-Y_k(s))[(\sigma_i)_k(s,X(s))-(\sigma_i)_k(s,Y(s))]^2d\langle B^i\rangle_s\\
=&\varphi_n(X_k(\tau)-Y_k(\tau\wedge T))\\
&+\int_{\tau}^{(\tau+t)\wedge\kappa}\varphi_{n}^{\prime}(X_k(s)-Y_k(s))[b_k(s,X(s))-\bar{b}_k(s,Y(s))\\
&+G([(h_{ij})_k+(h_{ji})_k]_{i,j=1}^{d}(s,X(s))-[(\bar{h}_{ij})_k+(\bar{h}_{ji})_k]_{i,j=1}^{d}(s,Y(s))]ds\\
&+V_{(\tau+t)\wedge\kappa}-V_{\tau}\\
&+\int_{\tau}^{(\tau+t)\wedge\kappa}\varphi_{n}^{\prime}(X_k(s)-Y_k(s))[(\sigma_i)_k(s,X(s))-(\sigma_i)_k(s,Y(s))]dB^{i}_s\\
&+\frac{1}{2}\int_{\tau\wedge T}^{(\tau+t)\wedge\kappa}\varphi_{n}^{''}(X_k(s)-Y_k(s))[(\sigma_i)_k(s,X(s))-(\sigma_i)_k(s,Y(s))]^2d\langle B^i\rangle_s
\end{align*}
where
\begin{align*}
V_t=&\int_{0}^{t}\varphi_{n}^{\prime}(X_k(s)-Y_k(s))[(h_{ij})_k(s,X(s))-(\bar{h}_{ij})_k(s,Y(s))]d\langle B^i, B^j\rangle _s\\
&-\int_{0}^{t}\varphi_{n}^{\prime}(X_k(s)-Y_k(s))G([(h_{ij})_k+(h_{ji})_k]_{i,j=1}^{d}(s,X(s))-[(\bar{h}_{ij})_k+(\bar{h}_{ji})_k]_{i,j=1}^{d}(s,Y(s))ds.
\end{align*}
It is easy to check that $(V_t)_{0\leq t\leq T}$ is a decreasing process. Thus
\begin{align*}
&\varphi_n(X_k((\tau+t)\wedge\kappa)-Y_k((\tau+t)\wedge\kappa))\\
\leq&\varphi_n(X_k(\tau)-Y_k(\tau))\\
&+\int_{\tau}^{(\tau+t)\wedge\kappa}\varphi_{n}^{\prime}(X_k(s)-Y_k(s))[b_k(s,X(s))-\bar{b}_k(s,Y(s))\\
&\ \ \ +G([(h_{ij})_k+(h_{ji})_k]_{i,j=1}^{d}(s,X(s))-[(\bar{h}_{ij})_k+(\bar{h}_{ji})_k]_{i,j=1}^{d}(s,Y(s))]ds\\
&+\int_{\tau}^{(\tau+t)\wedge\kappa}\varphi_{n}^{\prime}(X_k(s)-Y_k(s))[(\sigma_i)_k(s,X(s))-(\sigma_i)_k(s,Y(s))]dB^{i}_s\\
&+\frac{1}{2}\int_{\tau}^{(\tau+t)\wedge\kappa}\varphi_{n}^{''}(X_k(s)-Y_k(s))[(\sigma_i)_k(s,X(s))-(\sigma_i)_k(s,Y(s))]^2d\langle B^i\rangle_s\\
=&S_1(n)+S_2(n)+S_3(n)+S_4(n).
\end{align*}
Obviously, from the construction it follows that $S_1(n)=0$, $n=1,2,\ldots.,$
\begin{equation*}
\hat{\mathbb{E}}[S_3(n)\mathbf{1}_{\{\tau_k=\tau\}}]=0.
\end{equation*}
From (B2) we  derive
\begin{equation*}
\hat{\mathbb{E}}[|S_4(n)|]\leq\frac{Ct}{n}
\end{equation*}
Relation (\ref{leq}) implies
\begin{align*}
&S_2(n)\mathbf{1}_{\{\tau_k=\tau\}}\\
&\leq\mathbf{1}_{\{\tau_k=\tau\}}\int_{\tau}^{(\tau+t)\wedge\kappa}\varphi_{n}^{\prime}(X_k(s)-Y_k(s))[b_i(t,X(s))-\bar{b}_i(t,Y_1(s),\ldots,Y_{k-1}(s),X_k(s),Y_{k+1}(s),\ldots,Y_{d}(s))\\
&+G([(h_{ij})_k+(h_{ij})_k)]_{i,j=1}^{d}(t,X(s))\\
&-[(\bar{h}_{ij})_k+(\bar{h}_{ij})_k]_{i,j=1}^{d}(t,Y_1(s),\ldots,Y_{k-1}(s),X_k(s),Y_{k+1}(s),\ldots,Y_{d}(s)))]ds\\
&+\mathbf{1}_{\{\tau_k=\tau\}}\int_{\tau}^{(\tau+t)\wedge\kappa}\varphi_{n}^{\prime}(X_k(s)-Y_k(s))[\bar{b}_i(t,Y_1(s),\ldots,Y_{k-1}(s),X_k(s),Y_{k+1}(s),\ldots,Y_{d}(s))-\bar{b}_k(s,Y(s))\\
&+G([(\bar{h}_{ij})_k+(\bar{h}_{ji})_k]_{i,j=1}^{d}(s,Y_1(s),\ldots,Y_{k-1}(s),X_k(s),Y_{k+1}(s),\ldots,Y_{d}(s)))-[(\bar{h}_{ij})_k+(\bar{h}_{ji})_k]_{i,j=1}^{d}(s,Y(s))]ds\\
&\leq\mathbf{1}_{\{\tau_k=\tau\}}K\int_{\tau}^{(\tau+t)\wedge\kappa}[X_k(s)-Y_k(s)]^{+}ds\\
&\leq\mathbf{1}_{\{\tau_k=\tau\}}K\int_{0}^{t}[X_k((\tau+s)\wedge\kappa)-Y_k((\tau+s)\wedge\kappa)]^{+}ds.
\end{align*}
Consequently, as $n\rightarrow\infty$ we arrive at
\begin{equation*}
\hat{\mathbb{E}}[(X_k((\tau+t)\wedge\kappa)-Y_k((\tau+t)\wedge\kappa))^{+}\mathbf{1}_{\{\tau_k=\tau\}}]\\
\leq K\int_{0}^{t}\hat{\mathbb{E}}[(X_k((\tau+s)\wedge\kappa)-Y_k((\tau+s)\wedge\kappa))^{+}\mathbf{1}_{\{\tau_k=\tau\}}]ds.
\end{equation*}
In view of Gronwall's inequality this implies
\begin{equation*}
\hat{\mathbb{E}}[(X_k((\tau+t)\wedge\kappa)-Y_k((\tau+t)\wedge\kappa))^{+}\mathbf{1}_{\{\tau_k=\tau\}}]=0
\end{equation*}
and hence
\begin{equation*}
X_k((\tau+t)\wedge\kappa)\leq Y_k((\tau+t)\wedge\kappa) ~~~\textit{q.s.}
\end{equation*}
on$\{\tau_k=\tau\}$ for all $t\in[\tau,\kappa]$. This contradicts (\ref{ass}).

Now we consider the condition (B1). Let $\epsilon>0$ be arbitrarily chosen and define
\begin{equation*}
b_{k}^{\epsilon}:=b_k-\epsilon,~~k=1,\ldots,n.
\end{equation*}
From (B1) it follows immediately that $b^{\epsilon}$ satisfies condition (B1'). Consequently, we get for the corresponding solutions $X^{\epsilon}$
and $Y$ the relation
\begin{equation*}
X_{k}^{\epsilon}(t)\leq Y_k(t)~~\textit{q.s.}
\end{equation*}
for all $t\in [0,T]$, $k=1,\ldots,n.$ Choose a strictly decreasing sequence $(\epsilon_m)_{m\geq 1}$ with $\lim_{m\rightarrow\infty}\epsilon_m=0$. By the same arguments as above we get
\begin{equation*}
X_{k}^{\epsilon_1}(t)\leq X_{k}^{\epsilon_2}(t)\leq\ldots\leq Y_k(t) ~~\textit{q.s.}
\end{equation*}
as well as
\begin{equation*}
X_{k}^{\epsilon_1}(t)\leq X_{k}^{\epsilon_2}(t)\leq\ldots\leq X_k(t) ~~\textit{q.s.}
\end{equation*}
for all $t\in [0,T]$, $k=1,\ldots,n.$

Define
\begin{equation*}
\tilde{X}_k(t):=\lim_{m\rightarrow\infty}X_{k}^{\epsilon_m}(t)
\end{equation*}
for each $t\in [0,T]$. Obviously,
\begin{equation*}
\tilde{X}_k(t)\leq Y_k(t)~~\textit{q.s.}
\end{equation*}
for all $t\in [0,T]$, $k=1,\ldots,n.$
Moreover, we have,
\begin{align*}
\hat{\mathbb{E}}[\sup_{0\leq t\leq T}|X^{\epsilon_m}(t)-X(t)|^2]=&\hat{\mathbb{E}}[\sup_{0\leq t\leq T}|\int_{0}^{t}(b^{\epsilon_m}(s,X^{\epsilon_m}(s))-b(s,X(s)))ds\\
&+\int_{0}^{t}(h_{ij}(s,X^{\epsilon_m}(s))-h_{ij}(s,X(s))d\langle B^i,B^j\rangle)\\
&+\int_{0}^{t}(\sigma_i(s,X^{\epsilon_m}(s))-\sigma_i(s,X(s)))dB^{i}_s|^2]\\
&\leq 3(\hat{\mathbb{E}}[\sup_{0\leq t\leq T}|\int_{0}^{t}(b^{\epsilon_m}(s,X^{\epsilon_m}(s))-b(s,X(s)))ds|^2]\\
&+\hat{\mathbb{E}}[\sup_{0\leq t\leq T}|\int_{0}^{t}(h_{ij}(s,X^{\epsilon_m}(s))-h_{ij}(s,X(s))d\langle B^i,B^j\rangle)|^2]\\
&+\hat{\mathbb{E}}[\sup_{0\leq t\leq T}[|\int_{0}^{t}(\sigma_i(s,X^{\epsilon_m}(s))-\sigma_i(s,X(s)))dB^{i}_s|^2]).
\end{align*}
Applying BDG-inequality, we get
\begin{align*}
\hat{\mathbb{E}}[\sup_{0\leq t\leq T}|X^{\epsilon_m}(t)-X(t)|^2]&\leq C(\epsilon_{m}^2+\int_{0}^{T}\hat{\mathbb{E}}[|X^{\epsilon_m}(t)-X(t)|^2]dt)\\
&\leq C(\epsilon_{m}^2+\int_{0}^{T}\hat{\mathbb{E}}[\sup_{0\leq s\leq t}|X^{\epsilon_m}(s)-X(s)|^2]dt).
\end{align*}
By Gronwall's inequality, we have
\begin{equation*}
\hat{\mathbb{E}}[\sup_{0\leq t\leq T}|X^{\epsilon_m}(t)-X(t)|^2]\leq C|\epsilon_m|^2.
\end{equation*}
Up to a subsequence, still denoted as $(X^{\epsilon_m})_{m\geq 1}$, we have
\begin{equation*}
\lim_{m\rightarrow\infty}X^{\epsilon_m}(t)=X(t) ~~\textit{q.s.}
\end{equation*}
for all $t\in [0,T]$. This ends the proof.
\end{proof}
\begin{remark}
In general, the condition (B1) in the above theorem can not be replaced by the following condition:
\begin{description}
\item[(B1'')]
\begin{equation*}
\bar{b}_i(t,y)-b_i(t,x)+G([(\bar{h}_{lk})_i+(\bar{h}_{kl})_i]_{l,k=1}^{d}(t,y)-[(h_{lk})_i+(h_{kl})_i]_{l,k=1}^{d}(t,x))\geq 0
\end{equation*}
are fulfilled, whenever $x_i=y_i$ and $x_j\leq y_j$ for all $j\neq i$.
\end{description}
In fact, let $b_1=h_1=\bar{b}_1=\bar{h}_1=h_2=\bar{b}_2=0,$ $b_2=\frac{\overline{\sigma}^2+\underline{\sigma}^2}{2}$, $\bar{h}_2=1$ and $x=\bar{x}=0$, we consider 1-dimensional $G$-Brownian motion. One can show that $b,h,\bar{b},\bar{h}$ satisfy condition (B''). However we can not deduce that $\frac{\overline{\sigma}^2+\underline{\sigma}^2}{2}t\leq\langle B\rangle_t$, $q.s.$ whenever $\underline{\sigma}^2<\overline{\sigma}^2$.
\end{remark}
\section{Stochastic monotonicity and order-preservation}
Lin (\cite{LX}) defined the infinitesimal generator of $G$-SDEs and obtained the representation theorem under the Lipschitz condition. Similarly, we can obtain the relationships among $G$-It\^{o}'s diffusions, Markov semigroups and infinitesimal generators. They can be summarized as follows. We suppose $(X_t)_{t\geq 0}$ to be $n$-dimensional $G$-It\^{o} diffusion
\begin{align*}
dX_t=X_0+b(X_s)ds+h_{ij}(X_s)d\langle B^i,B^j\rangle_s+\sigma_j(X_s)dB^{j}_s,\ t\in[0,T],
\end{align*}
where $(B_t)_{t\geq 0}$ is a $d$-dimensional $G$-Brownian motion and $b,~h,~\sigma$ are Lipschitz continuous functions on $\mathbb{R}^n$. The Markov semigroup $\mathcal{E}_t$ is defined by $\mathcal{E}_tf(x)=\hat{\mathbb{E}}[f(X_{t}^{0,x})]$, where $X^{0,x}_{.}$ represents the $G$-It\^{o} process with initial condition $x$ at initial time $t=0$ and $f$ is a function defined on $\mathbb{R}^n$. The infinitesimal generator $L$ of the Markov semigroup, which satisfies
\begin{align*}
Lf(x)=\lim\limits_{t\rightarrow 0^+}\frac{\mathcal{E}_tf(x)-f(x)}{t}
\end{align*}
for $f$ appropriately taken such that the above limit exists, is of the following form:
\begin{align*}
Lf=\langle \partial_xf,b\rangle+G(\langle\partial_xf,h\rangle +\langle\partial_{xx}^2f\sigma,\sigma\rangle)
\end{align*}
where $\langle\partial_xf,h\rangle +\langle\partial_{xx}^2f\sigma,\sigma\rangle$ is a $d\times d$ symmetric matrix in $\mathbb{S}^d(\mathbb{R})$, defined by:
\begin{align*}
\langle\partial_xf,h\rangle +\langle\partial_{xx}^2f\sigma,\sigma\rangle:=[\langle\partial_xf,h_{ij}+h_{ji}\rangle +\langle\partial_{xx}^2f\sigma_i,\sigma_j\rangle]_{i,j=1}^d,
\end{align*}
\begin{equation*}
L=\sum_{i=1}^{n} b_i\frac{\partial}{\partial_{x_i}}+G([\sum_{i=1}^{n} (h_{lk}+h_{kl})_i\frac{\partial}{\partial_{x_i}}+\sum_{i,j=1}^{n}\sigma_{il}\sigma_{jk}\frac{{\partial}^2}{\partial_{x_i}\partial_{x_j}}]_{l,k=1}^{d}).
\end{equation*}

Now we introduce the following definitions, which are similar to that in Herbst and Pitt (\cite{HP}) and Chen and Wang (\cite{CW}). Let "$\leq$" denote the usual semi-order in $\mathbb{R}^n$.
\begin{description}
\item[(1)] A measurable function $f$ is called monotone if
\begin{equation*}
f(x)\leq f(\bar{x})~~\textit{for all}~~ x\leq\bar{x}.
\end{equation*}
Denote by $\mathcal{M}$ the set of all bounded Lipschitz continuous monotone functions.
\item[(2)] For two semigroups $\{\mathcal{E}_t\}_{0\leq t\leq T}$ and $\{\bar{\mathcal{E}}_t\}_{0\leq t\leq T}$, we write $\mathcal{E}_t\geq \bar{\mathcal{E}}_t$, if for all $f\in\mathcal{M}$, for all $x\geq\bar{x}$ and $0\leq t\leq T$,
\begin{equation*}
\mathcal{E}_tf(x)\geq \bar{\mathcal{E}}_tf(\bar{x}).
\end{equation*}
 If in addition, $\mathcal{E}_t=\bar{\mathcal{E}}_t$, we call $\mathcal{E}_t$ monotone.
\end{description}
Let
\begin{align*}
Lf=\langle \partial_xf,b\rangle+G(\langle\partial_xf,h\rangle +\langle\partial_{xx}^2f\bar{\sigma},\bar{\sigma}\rangle),
\end{align*}
\begin{align*}
\bar{L}f=\langle \partial_xf,\bar{b}\rangle+G(\langle\partial_xf,\bar{h}\rangle +\langle\partial_{xx}^2f\bar{\sigma},\bar{\sigma}\rangle),
\end{align*}
\begin{align*}
\bar{L}^{'}f=\langle \partial_xf,\bar{b}\rangle+G(\langle\partial_xf,\bar{h}\rangle +\langle\partial_{xx}^2f\sigma,\sigma\rangle)
\end{align*}
and let $\{\mathcal{E}_t\}_{0\leq t\leq T}$, $\{\bar{\mathcal{E}}_t\}_{0\leq t\leq T}$ and $\{\bar{\mathcal{E}}^{'}_t\}_{0\leq t\leq T}$be the semigroup generated by $L$, $\bar{L}$ and $\bar{L}^{'}$ respectively. And we always assume that $b,~h_{ij},~\sigma_i$ and $\bar{b},~\bar{h}_{ij},~\bar{\sigma}_i$ satisfy (H2) for each $i,j=1,\ldots,d$.

We have the following results.
\begin{theorem}\label{5.1}
Suppose the following conditions hold:
%$\mathcal{E}_t$ is monotone if and only if the following conditions hold:\\
\begin{description}
\item[(C1)] for all $i,j$, $\sigma_{li}\sigma_{kj}$ depends only on $x_i$ and $x_j$, $l,k=1,\ldots, d.$
\item[(C2)] for all $i$, $b_i(x)-b_i(y)+G([(h_{l,k})_i(x)+(h_{k,l})_i(x)]_{l,k=1}^{d}-[(h_{l,k})_i(y)+(h_{k,l})_i(y)]_{l,k=1}^{d}))\leq 0$ whenever $x\leq y$ with $x_i=y_i$.
\end{description}
then $\mathcal{E}_t$ is monotone¡£
\end{theorem}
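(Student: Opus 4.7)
The plan is to deduce monotonicity of the semigroup $\mathcal{E}_t$ from the multidimensional comparison theorem (Theorem \ref{com}) applied to two copies of the same $G$-SDE started from ordered initial conditions. The key observation is that monotonicity of $\mathcal{E}_t$ in the sense of the paper amounts to a pathwise comparison between $X^{0,\bar x}$ and $X^{0,x}$ for $\bar x \le x$.

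First, I would fix $\bar x \le x$ in $\mathbb{R}^n$ and $f \in \mathcal{M}$, and reduce the claim $\mathcal{E}_t f(\bar x) \le \mathcal{E}_t f(x)$ to the q.s.\ statement $X^{0,\bar x}_k(t)\le X^{0,x}_k(t)$ for every $k=1,\dots,n$ and $t\in[0,T]$. Granted such a coordinatewise bound, monotonicity of $f$ yields $f(X^{0,\bar x}_t)\le f(X^{0,x}_t)$ q.s., and monotonicity of $\hat{\mathbb{E}}$ then gives $\hat{\mathbb{E}}[f(X^{0,\bar x}_t)]\le \hat{\mathbb{E}}[f(X^{0,x}_t)]$, which by the definition $\mathcal{E}_t f(\cdot)=\hat{\mathbb{E}}[f(X^{0,\cdot}_t)]$ is the desired inequality.

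Second, I would produce the pathwise comparison by invoking Theorem \ref{com} with $X=X^{0,\bar x}$, $Y=X^{0,x}$ and $\bar b = b$, $\bar h_{ij}=h_{ij}$, same $\sigma_i$. In this set-up hypothesis (B1) of that theorem becomes
\[
b_i(x')-b_i(y')+G\bigl([(h_{lk})_i+(h_{kl})_i]_{l,k=1}^d(x')-[(h_{lk})_i+(h_{kl})_i]_{l,k=1}^d(y')\bigr)\le 0
\]
whenever $x'_i=y'_i$ and $x'_j\le y'_j$ for $j\ne i$, which is precisely hypothesis (C2). The more delicate point is to verify (B2), namely that $(\sigma_i)_k(x)$ depends only on $x_k$ and is Lipschitz in that variable.

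Third, I would bridge (C1) to (B2) using the Lipschitz continuity assumption (H2). Specialising (C1) to $l=k$ (Brownian index) and $i=j$ (spatial index) gives $(\sigma_l)_i(x)^2$ as a function of $x_i$ alone, so $|(\sigma_l)_i(x)|$ depends only on $x_i$. Since $(\sigma_l)_i$ is continuous on $\mathbb{R}^n$ and the hyperplane $\{y\in\mathbb{R}^n:y_i=x_i\}$ is connected, continuity rules out sign changes along it, so $(\sigma_l)_i(x)$ itself depends only on $x_i$. After relabelling indices this is (B2), and the Lipschitz bound follows from (H2). With (B1) and (B2) verified, Theorem \ref{com} delivers $X^{0,\bar x}_k(t)\le X^{0,x}_k(t)$ q.s.\ for every $k$ and $t$, completing the reduction outlined in the first step.

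The step I expect to be the main obstacle is the passage from (C1) to (B2), because (C1) constrains only the products $\sigma_{li}\sigma_{kj}$ rather than individual entries, and one must exploit continuity to rule out a sign ambiguity in $(\sigma_l)_i$ along hyperplanes with $x_i$ fixed. Once this is settled, the remaining ingredients—application of Theorem \ref{com} and the passage from a q.s.\ pathwise comparison to an inequality under $\hat{\mathbb{E}}$ via the order-preservation properties of sublinear expectations—are routine.
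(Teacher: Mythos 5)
Your proposal is correct and follows essentially the same route as the paper: apply Theorem \ref{com} with $\bar b = b$, $\bar h = h$ and the same $\sigma$ to obtain the pathwise comparison $X^{0,\bar x}_k(t)\le X^{0,x}_k(t)$ q.s., then conclude via the monotonicity of $f$ and of $\hat{\mathbb{E}}$. You are in fact more careful than the paper, whose one-line proof silently assumes hypothesis (B2); your connectedness argument deducing from (C1) (with $l=k$, $i=j$) that $(\sigma_l)_i$ depends only on $x_i$ is a correct and worthwhile addition.
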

\begin{proof}
Suppose (C1)and(C2) hold. By setting $\bar{b}=b$ and $\bar{h}=h$ in Theorem (\ref{com}), we have $\forall x\leq\bar{x}$, $X^{0,x}_t\leq X^{0,\bar{x}}_t$ q.s.. Then by the monotonicity of $f$, the results follows.
\end{proof}
\begin{theorem}\label{5.2}
If $\mathcal{E}_t$ is monotone, then the following conditions hold:
%$\mathcal{E}_t$ is monotone if and only if the following conditions hold:\\
\begin{description}
\item[(C1)] for all $i,j$, $\sigma_{li}\sigma_{kj}$ depends only on $x_i$ and $x_j$, $l,k=1,\ldots, d.$
\item[(C2')] for all $i$, $b_i(x)-b_i(y)+G([(h_{l,k})_i(x)+(h_{k,l})_i(x)]_{l,k=1}^{d}-[(h_{l,k})_i(y)+(h_{k,l})_i(y)]_{l,k=1}^{d}))\geq 0$ whenever $x\geq y$ with $x_i=y_i$.
\end{description}
\end{theorem}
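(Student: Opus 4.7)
The plan is to pass from monotonicity of $\mathcal{E}_t$ to an infinitesimal condition on the generator $L$, and then probe $L$ with carefully chosen monotone test functions. The pivotal observation is an \emph{infinitesimal monotonicity principle}: for every $f\in\mathcal{M}\cap C_b^2(\mathbb{R}^n)$ and every pair $x\geq y$ with $f(x)=f(y)$, we have $Lf(x)\geq Lf(y)$. Indeed, from the hypothesis $\mathcal{E}_tf(x)\geq\mathcal{E}_tf(y)$, subtract $f(x)-f(y)=0$, divide by $t>0$, and let $t\downarrow 0^+$; the limit is $Lf(x)-Lf(y)$ by the nonlinear Feynman--Kac formula (Theorem~\ref{theA.9}), which realises $u(t,x):=\mathcal{E}_tf(x)$ as the viscosity solution of the Cauchy problem $\partial_tu=Lu$, $u(0,\cdot)=f$.

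For (C2') I would apply the principle to the one-variable test function $f(z)=\phi(z_i)$, with $\phi$ smooth, bounded, nondecreasing, and $\phi'(x_i)=1$, $\phi''(x_i)=0$. Since $x_i=y_i$ these derivative values coincide at $z=x$ and $z=y$, and $f(x)=f(y)$; a direct computation gives $Lf(z)=b_i(z)+G(H_i(z))$ at $z\in\{x,y\}$ with $H_i(z)_{lk}:=(h_{lk})_i(z)+(h_{kl})_i(z)$. The principle then yields $b_i(x)-b_i(y)\geq G(H_i(y))-G(H_i(x))\geq -G\bigl(H_i(x)-H_i(y)\bigr)$, the last step being the sub-additivity bound $G(H_i(x))\leq G(H_i(x)-H_i(y))+G(H_i(y))$. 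This is precisely (C2').

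For (C1) the analogous strategy uses two-variable test functions $f(z)=\phi(z_i,z_j)$, with $\phi(u,v)$ smooth, bounded, and nondecreasing in each argument. Fixing $x\geq y$ with $x_i=y_i$ and $x_j=y_j$ gives $f(x)=f(y)$, and by choosing $\phi$ so that the first partials $\phi_u,\phi_v$ are of order $\varepsilon>0$ at $(x_i,x_j)$ while $\phi_{uu},\phi_{uv},\phi_{vv}$ take prescribed values, the infinitesimal monotonicity, after passing to the limit $\varepsilon\downarrow 0$, reduces to
\[
G\bigl(N_{\alpha,\beta,\gamma}(x)\bigr)\geq G\bigl(N_{\alpha,\beta,\gamma}(y)\bigr)\qquad\text{for every }(\alpha,\beta,\gamma)\in\mathbb{R}^{3},
\]
where $N_{\alpha,\beta,\gamma}(z):=\alpha\,S_{ii}(z)+\beta\bigl(S_{ij}(z)+S_{ji}(z)\bigr)+\gamma\,S_{jj}(z)$ and $S_{pq}(z):=[\sigma_{l,p}(z)\sigma_{k,q}(z)]_{l,k=1}^{d}$. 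Replacing $(\alpha,\beta,\gamma)$ by $(-\alpha,-\beta,-\gamma)$ (positive homogeneity) and combining with sub-additivity and monotonicity of $G$ should then force $S_{ii}$, $S_{ij}+S_{ji}$ and $S_{jj}$ to coincide at $x$ and $y$, giving the required independence of $\sigma_{l,i}\sigma_{k,j}$ from any coordinate other than $x_i$ and $x_j$.

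The main obstacle will be the last step: since $G$ is sublinear but not linear, the inequality $G(N(x))\geq G(N(y))$ does not immediately imply $N(x)=N(y)$, and componentwise equality can only be recovered by a careful interplay of positive homogeneity (flipping the sign of $(\alpha,\beta,\gamma)$), sub-additivity, and monotonicity of $G$. A secondary technical issue is the construction, for prescribed values of $\phi_u,\phi_v,\phi_{uu},\phi_{uv},\phi_{vv}$ at a single point, of a globally bounded nondecreasing $\phi\in C_b^2(\mathbb{R}^2)$ realising those values while remaining monotone throughout $\mathbb{R}^2$---which in particular forces $\phi_u,\phi_v\geq 0$ globally and so limits how negative $\phi_{uu},\phi_{vv}$ may be taken at $(x_i,x_j)$.
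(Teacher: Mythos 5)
Your overall strategy coincides with the paper's: pass from monotonicity of $\mathcal{E}_t$ to the infinitesimal inequality $Lf(x)\geq Lf(y)$ for monotone smooth $f$ with $f(x)=f(y)$, and then probe $L$ with test functions depending on $x_i$ alone (resp.\ on $x_i+x_j$) in which the second-derivative term is made to dominate the first-derivative term. Your derivation of (C2') is correct and is essentially identical to the paper's: it takes $f(z)=z_i$ near the two points, compares $b_i(x)+G(H_i(x))$ with $b_i(y)+G(H_i(y))$, and finishes with sub-additivity of $G$.

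For (C1), however, your argument stops exactly where the work is. You reduce matters to $G\bigl(N_{\alpha,\beta,\gamma}(x)\bigr)\geq G\bigl(N_{\alpha,\beta,\gamma}(y)\bigr)$ and then say this ``should force'' the coefficients $\sigma_{li}\sigma_{kj}$ to agree at $x$ and $y$, explicitly flagging the deduction as the main unresolved obstacle; but that deduction is the content of (C1), so as written the claim is not proved. The paper closes this step with a concrete pair of test functions: $f_m(z)=(z_i-x_i^{(1)}+1)^{2m+1}$ and $f_m(z)=(z_i-x_i^{(1)}-1)^{2m+1}$ (and the analogous functions of $z_i+z_j$ for the off-diagonal case). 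Both are nondecreasing where the formula applies, but at the probe point their second derivatives equal $\pm(2m+1)(2m)$ while the first derivatives are only $(2m+1)$; dividing the inequality $Lf_m(x^{(1)})\leq Lf_m(x^{(2)})$ by $(2m+1)(2m)$ and letting $m\to\infty$ annihilates the $b$- and $h$-contributions and yields the limiting $G$-inequality once with $+S$ and once with $-S$, where $S(z)=[\sigma_{li}(z)\sigma_{kj}(z)]_{l,k=1}^{d}$; the two opposite signs produce the two opposite inequalities from which the equality of the diffusion coefficients is concluded. In your parametrization this corresponds to using both $(\alpha,\beta,\gamma)$ and $-(\alpha,\beta,\gamma)$, which is admissible (a globally nondecreasing bounded $\phi$ can have an arbitrarily negative second derivative at a point where $\phi'>0$, e.g.\ by letting $\phi'$ decay exponentially), so your secondary worry about constructing $\phi$ is not the true difficulty; the true gap is that the passage from the pair of $G$-inequalities to the pointwise identity $\sigma_{li}\sigma_{kj}(x)=\sigma_{li}\sigma_{kj}(y)$ is never carried out. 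Until that sign-flip argument (or an equivalent one) is written down, part (C1) of your proof is incomplete.
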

\begin{proof}
Suppose that $\mathcal{E}_t$ is monotone,
\begin{align}\label{1}
\lim\limits_{t\rightarrow 0^+}\frac{1}{t}(\mathcal{E}_tf(x)-f(x))=\langle \partial_xf,b\rangle+G(\langle\partial_xf,h\rangle +\langle\partial_{xx}^2f\sigma,\sigma\rangle).
\end{align}
\begin{description}
\item[(a)] For given $i$, we take $x^{(1)}\leq x^{(2)}$ with $x_{i}^{(1)}=x_{i}^{(2)}$ and a sequence of functions $f_m\in\mathcal{M}\bigcap C_{b}^{\infty}$ ($m\in\mathbb{N}$) such that $f_m(x)=(x_i-x_{i}^{(1)}+1)^{2m+1}$ in a neighborhood of $\{x^{(1)},x^{(2)}\}$. We have $\lim_{t\rightarrow 0}\frac{1}{t}(\mathcal{E}_tf_m(x^{(1)})-f_m(x^{(1)}))\leq\lim_{t\rightarrow 0}\frac{1}{t}(\mathcal{E}_tf_m(x^{(2)})-f_m(x^{(2)}))$, i.e.,
\begin{align*}
&\langle \partial_xf(x^{(1)}),b(x^{(1)})\rangle+G(\langle\partial_xf(x^{(1)}),h(x^{(1)})\rangle +\langle\partial_{xx}^2f(x^{(1)})\sigma(x^{(1)}),\sigma(x^{(1)})\rangle)\\
&\leq\langle \partial_xf(x^{(2)}),b(x^{(2)})\rangle+G(\langle\partial_xf(x^{(2)}),h(x^{(2)})\rangle +\langle\partial_{xx}^2f(x^{(2)})\sigma(x^{(2)}),\sigma(x^{(2)})\rangle).
\end{align*}
Then we get
\begin{align*}
&G([\frac{(h_{l,k})_i(x^{(2)})+(h_{k,l})_i(x^{(2)})}{2m}+\sigma_{li}(x^{(2)})\sigma_{ki}(x^{(2)})]_{l,k=1}^{d}\\
&\ \ \
-[\frac{(h_{l,k})_i(x^{(1)})+(h_{k,l})_i(x^{(1)})}{2m}+\sigma_{li}(x^{(1)})\sigma_{ki}(x^{(1)})]_{l,k=1}^{d})\\
&\geq G([\frac{(h_{l,k})_i(x^{(2)})+(h_{k,l})_i(x^{(2)})}{2m}+\sigma_{li}(x^{(2)})\sigma_{ki}(x^{(2)})]_{j,k=1}^{d})\\
&\ \ \
-G([\frac{(h_{l,k})_i(x^{(1)})+(h_{k,l})_i(x^{(1)})}{2m}+\sigma_{li}(x^{(1)})\sigma_{ki}(x^{(1)})]_{l,k=1}^{d})\\
&\geq\frac{1}{2m}[b_i(x^{(1)})-b_i(x^{(2)})].
\end{align*}
So $[\sigma_{li}(v)\sigma_{ki}(v)]_{l,k=1}^{d}\geq[\sigma_{li}(u)\sigma_{ki}(u)]_{l,k=1}^{d}$ as $m\rightarrow\infty$. Replacing $f_m$ with $(x_i-x_{i}^{(1)}-1)^{2m+1}$ in a neighborhood of $\{x^{(1)},x^{(2)}\}$, we obtain the inverse inequality. Therefore, $[\sigma_{li}(v)\sigma_{ki}(v)]_{l,k=1}^{d}=[\sigma_{li}(u)\sigma_{ki}(u)]_{l,k=1}^{d}.$
\item[(b)] For given $i,j$ $i\neq j$, we take $x^{(1)}\leq x^{(2)}$ with $x_{i}^{(1)}=x_{i}^{(2)}$, $x_{j}^{(1)}=x_{j}^{(2)}$and a sequence of functions $f_m\in\mathcal{M}\bigcap C_{b}^{\infty}$ ($m\in\mathbb{N}$) such that $f_m(x)=(x_i+x_j-x_{i}^{(1)}-x_{j}^{(1)}+1)^{2m+1}$ in a neighborhood of $\{x^{(1)},x^{(2)}\}$. We have $\lim_{t\rightarrow 0}\frac{1}{t}(\mathcal{E}_tf_m(x^{(1)})-f_m(x^{(1)}))\leq\lim_{t\rightarrow 0}\frac{1}{t}(\mathcal{E}_tf_m(x^{(2)})-f_m(x^{(2)}))$, i.e.,
\begin{align*}
&\langle \partial_xf(x^{(1)}),b(x^{(1)})\rangle+G(\langle\partial_xf(x^{(1)}),h(x^{(1)})\rangle +\langle\partial_{xx}^2f(x^{(1)})\sigma(x^{(1)}),\sigma(x^{(1)})\rangle)\\
&\leq\langle \partial_xf(x^{(2)}),b(x^{(2)})\rangle+G(\langle\partial_xf(x^{(2)}),h(x^{(2)})\rangle +\langle\partial_{xx}^2f(x^{(2)})\sigma(x^{(2)}),\sigma(x^{(2)})\rangle).
\end{align*}
Then we get
\begin{align*}
&G([\frac{(h_{l,k})_i(x^{(2)})+(h_{k,l})_i(x^{(2)})+(h_{l,k})_j(x^{(2)})+(h_{k,l})_j(x^{(2)})}{2m}+\sigma_{li}(x^{(2)})\sigma_{kj}(x^{(2)})]_{l,k=1}^{d}\\
&\ \ \ -[\frac{(h_{l,k})_i(x^{(1)})+(h_{k,l})_i(x^{(1)})+(h_{l,k})_j(x^{(1)})+(h_{k,l})_j(x^{(1)})}{2m}+\sigma_{li}(x^{(1)})\sigma_{kj}(x^{(1)})]_{l,k=1}^{d})\\
&\geq G([\frac{(h_{l,k})_i(x^{(2)})+(h_{k,l})_i(x^{(2)})+(h_{l,k})_j(x^{(2)})+(h_{k,l})_j(x^{(2)})}{2m}+\sigma_{li}(x^{(2)})\sigma_{kj}(x^{(2)})]_{j,k=1}^{d})\\
&\ \ \ -G([\frac{(h_{l,k})_i(x^{(1)})+(h_{k,l})_i(x^{(1)})+(h_{l,k})_j(x^{(1)})+(h_{k,l})_j(x^{(1)})}{2m}+\sigma_{li}(x^{(1)})\sigma_{kj}(x^{(1)})]_{l,k=1}^{d})\\
&\geq\frac{1}{2m}[b_i(x^{(1)})+b_j(x^{(1)})-b_i(x^{(2)})-b_j(x^{(2)})].
\end{align*}
So $[\sigma_{li}(v)\sigma_{kj}(v)]_{l,k=1}^{d}\geq[\sigma_{li}(u)\sigma_{kj}(u)]_{l,k=1}^{d}$ as $m\rightarrow\infty$. Replacing $f_m$ with $(x_i+x_j-x_{i}^{(1)}-x_{j}^{(1)}+1)^{2m+1}$ in a neighborhood of $\{x^{(1)},x^{(2)}\}$, we obtain the inverse inequality. Therefore, $[\sigma_{li}(v)\sigma_{kj}(v)]_{l,k=1}^{d}=[\sigma_{li}(u)\sigma_{kj}(u)]_{l,k=1}^{d}$. Thus (C1) holds.
\item[(c)] For given $i$, we take $x^{(1)}\leq x^{(2)}$ with $x^{(1)}_i=x^{(2)}_i$ and $f_m\in\mathcal{M}\bigcap C_{b}^{\infty}$ ($m\in\mathbb{N}$) such that $f(x)=x_i$ in a neighborhood of $\{x^{(1)},x^{(2)}\}$.By (\ref{1}), we have
\begin{equation*}
b_i(x^{(1)})+G([(h_{l,k})_i(x^{(1)})+(h_{k,l})_i(x^{(1)})]_{l,k=1}^{d})\leq b_i(x^{(2)})+G([(h_{l,k})_i(x^{(2)})+(h_{k,l})_i(x^{(2)})]_{l,k=1}^{d}).
\end{equation*}
 Thus by the subadditivity, (C2') holds.
 \end{description}
\end{proof}
\begin{theorem}\label{oder}
If $\mathcal{E}_t\geq\bar{\mathcal{E}}_t$ then the following two conditions hold:
\begin{description}
\item[(D1)] for all $i,j$, $\sigma_{il}\sigma_{jk}\equiv\bar{\sigma}_{il}\bar{\sigma}_{jk}$ and $\sigma_{il}\sigma_{jk}$ depends only on $x_i$ and $x_j$, $l,k=1,\ldots, d.$
\item[(D2)] for all $i$, $b_i(x)-\bar{b}_i(y)+G([(h_{l,k})_i(x)+(h_{k,l})_i(x)]_{l,k=1}^{d}-[(\bar{h}_{l,k})_i(y)+(\bar{h}_{k,l})_i(y)]_{l,k=1}^{d})\geq 0$ whenever $x\geq y$ with $x_i=y_i$.
\end{description}
\end{theorem}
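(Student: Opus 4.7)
The plan is to mirror the proof of Theorem~\ref{5.2}, but with one key modification: in place of the single-generator inequality $Lf(x^{(1)})\leq Lf(x^{(2)})$ that there followed from monotonicity, I use the two-generator inequality
\[Lf(x)\geq \bar{L}f(\bar{x})\]
which is valid for any $f\in\mathcal{M}\cap C_b^{\infty}$ and any pair $x\geq \bar{x}$ with $f(x)=f(\bar{x})$. This inequality is immediate from the hypothesis $\mathcal{E}_t\geq \bar{\mathcal{E}}_t$: subtracting $f(x)=f(\bar{x})$ from both sides of $\mathcal{E}_tf(x)\geq \bar{\mathcal{E}}_tf(\bar{x})$, dividing by $t$ and letting $t\to 0^+$ yields the claim. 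Both (D1) and (D2) will be read off from this one inequality applied to the same polynomial test functions used in Theorem~\ref{5.2}.

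To prove (D2), take $x\geq \bar{x}$ with $x_i=\bar{x}_i$ and choose $f\in\mathcal{M}\cap C_b^{\infty}$ agreeing locally with $z\mapsto z_i$ near $\{x,\bar{x}\}$. Then $f(x)=f(\bar{x})$, and at the two points $\partial_zf=e_i$ and $\partial^2_{zz}f=0$, so the inequality $Lf(x)\geq \bar{L}f(\bar{x})$ becomes
\[b_i(x)+G([(h_{lk})_i+(h_{kl})_i]_{l,k=1}^d(x))\geq \bar{b}_i(\bar{x})+G([(\bar{h}_{lk})_i+(\bar{h}_{kl})_i]_{l,k=1}^d(\bar{x})),\]
which is exactly (D2).

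For (D1) I would run the two-step program of Theorem~\ref{5.2}. In the diagonal case ($i=j$), fix $x\geq \bar{x}$ with $x_i=\bar{x}_i$ and use two families $f_m^{\pm}\in\mathcal{M}\cap C_b^{\infty}$ agreeing locally with $(z_i-\bar{x}_i\pm 1)^{2m+1}$; in the off-diagonal case ($i\neq j$) use $(z_i+z_j-\bar{x}_i-\bar{x}_j\pm 1)^{2m+1}$, with $x_i=\bar{x}_i$ and $x_j=\bar{x}_j$. In both cases $f_m^{\pm}(x)=f_m^{\pm}(\bar{x})=\pm 1$. Substituting into $Lf_m^{\pm}(x)\geq \bar{L}f_m^{\pm}(\bar{x})$, pulling out the common factor $(2m+1)$ by positive homogeneity of $G$, applying subadditivity in the form $G(A)-G(B)\leq G(A-B)$, dividing by $2m$ and letting $m\to\infty$ yields, in the diagonal case, the opposite pair of $G$-inequalities
\[G([\sigma_{li}\sigma_{ki}(x)-\bar{\sigma}_{li}\bar{\sigma}_{ki}(\bar{x})]_{l,k=1}^d)\geq 0,\qquad G([\bar{\sigma}_{li}\bar{\sigma}_{ki}(\bar{x})-\sigma_{li}\sigma_{ki}(x)]_{l,k=1}^d)\geq 0,\]
and an analogous pair in the off-diagonal case after subtracting the diagonal identities just obtained, isolating the cross terms $\sigma_{li}\sigma_{kj}+\sigma_{lj}\sigma_{ki}$. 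Combining the two opposite $G$-inequalities exactly as in Theorem~\ref{5.2} (invoking (H3)) delivers the scalar identities $\sigma_{li}\sigma_{kj}(x)=\bar{\sigma}_{li}\bar{\sigma}_{kj}(\bar{x})$. Setting $x=\bar{x}$ gives the pointwise equality $\sigma_{li}\sigma_{kj}\equiv \bar{\sigma}_{li}\bar{\sigma}_{kj}$, and varying $x$ over $\{y\geq \bar{x}:y_i=\bar{x}_i,\ y_j=\bar{x}_j\}$ (together with a pointwise-maximum argument to pass from ordered to unordered pairs agreeing on the two coordinates) shows that $\sigma_{li}\sigma_{kj}$ depends only on $x_i$ and $x_j$, which is the remaining content of (D1).

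The main obstacle is the step that promotes the two opposite $G$-inequalities to the scalar identity $\sigma_{li}\sigma_{kj}(x)=\bar{\sigma}_{li}\bar{\sigma}_{kj}(\bar{x})$: this is exactly the analogous step of Theorem~\ref{5.2}, and it is not purely formal, relying on the non-degeneracy assumption (H3) on $G$ to force the rank-one difference matrix to vanish. A lesser, purely technical point is that each polynomial $(z_i\pm 1)^{2m+1}$ must be extended to an element of $\mathcal{M}$—bounded, Lipschitz, and monotone on all of $\mathbb{R}^n$—which can be achieved by a compactly supported smooth modification outside a small neighbourhood of $\{x,\bar{x}\}$ without altering the values of $Lf$ or $\bar{L}f$ at the two target points.
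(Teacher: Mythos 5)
Your proposal follows essentially the same route as the paper: the paper isolates your two-generator inequality $Lf(x)\geq\bar{L}f(y)$ for $x\geq y$ with $f(x)=f(y)$ as Lemma~\ref{lem1} (proved by the same truncation-and-limit argument you sketch), then obtains (D2) from the local test function $z\mapsto z_i$ (Lemma~\ref{lem2}) and (D1) from the same families $(z_i-u_i\pm1)^{2m+1}$ and $(z_i+z_j-u_i-u_j\pm1)^{2m+1}$ in a two-step diagonal/off-diagonal argument (Lemma~\ref{lem3}). The proposal is correct and matches the paper's proof in all essentials.
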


To prove the above theorem, we need some Lemmas.
\begin{lemma}\label{lem1}
If $\mathcal{E}_t\geq\bar{\mathcal{E}}_t$, then $Lf(x)\geq\bar{L}(y)$ for all $x\geq y$ and $f\in\mathcal{M}\bigcap C_{b}^{\infty}$ with $f(x)=f(y).$
\end{lemma}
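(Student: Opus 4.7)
The plan is to apply the hypothesis $\mathcal{E}_t\geq\bar{\mathcal{E}}_t$ directly to the admissible test function $f$ and then pass to the infinitesimal limit $t\to 0^+$, exploiting the equality $f(x)=f(y)$ to make the subtraction cancel cleanly.

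First, because $f\in\mathcal{M}$ is a bounded Lipschitz monotone function and $x\geq y$, the definition of the order $\mathcal{E}_t\geq\bar{\mathcal{E}}_t$ directly yields
\begin{equation*}
\mathcal{E}_t f(x)\geq \bar{\mathcal{E}}_t f(y)\qquad\text{for every }t\in[0,T].
\end{equation*}
Next, using the assumption $f(x)=f(y)$, I subtract this common real number from both sides and divide by $t>0$, which preserves the inequality:
\begin{equation*}
\frac{\mathcal{E}_t f(x)-f(x)}{t}\;\geq\;\frac{\bar{\mathcal{E}}_t f(y)-f(y)}{t}.
\end{equation*}

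Finally, I let $t\to 0^+$. Since $f\in C_b^{\infty}$ and $b,h_{ij},\sigma_j,\bar b,\bar h_{ij},\bar\sigma_j$ are bounded Lipschitz (assumption (H2)), the infinitesimal generators act on $f$ via the closed-form expressions recalled at the start of the section, namely $Lf=\langle\partial_x f,b\rangle+G(\langle\partial_x f,h\rangle+\langle\partial_{xx}^2 f\,\sigma,\sigma\rangle)$ and the analogous formula for $\bar L$. In particular both limits $\lim_{t\to 0^+}t^{-1}(\mathcal{E}_t f(x)-f(x))=Lf(x)$ and $\lim_{t\to 0^+}t^{-1}(\bar{\mathcal{E}}_t f(y)-f(y))=\bar L f(y)$ exist (this is the representation result of Lin cited in the opening of Section 5). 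Passing to the limit in the displayed inequality therefore yields $Lf(x)\geq \bar L f(y)$, which is the claim.

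There is essentially no serious obstacle; the only nontrivial point is the existence of the two infinitesimal limits, which is handled by appealing to the $G$-It\^o generator formula valid on $C_b^{\infty}$. The hypotheses $f\in\mathcal{M}$ (so the ordering hypothesis applies) and $f(x)=f(y)$ (so the constants drop out under subtraction) are exactly what make the elementary ``difference quotient'' argument go through.
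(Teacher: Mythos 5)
Your proposal is correct and follows essentially the same route as the paper: apply the order hypothesis $\mathcal{E}_tf(x)\geq\bar{\mathcal{E}}_tf(y)$ for the monotone test function $f$, use $f(x)=f(y)$ to form the two difference quotients, and let $t\to 0^+$. The only difference is that the paper does not invoke the generator limit directly on $f\in C_b^{\infty}\cap\mathcal{M}$; it first sandwiches $f$ between $f_1=hf+a(1-h)\geq f\geq hf=f_2$ (with $h$ a smooth cutoff equal to $1$ near $x$ and $y$, and $a\geq\sup f$), so that the difference quotients are taken on functions that are compactly supported up to a constant --- the class for which the generator representation is actually established --- and then concludes by locality of $L$ and $\bar{L}$; your shortcut is fine exactly insofar as the representation result you cite is known to hold on all of $C_b^{\infty}$, which is the one point you should check.
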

\begin{proof}
Without loss of generality, assume that $f\geq 0$. Choose $m>0$ such that $\{z:|z|<m\}$ contains $x$ and $y$ and take $h\in C_{b}^{\infty}$ such that
\begin{equation*}
1\geq h(z)=\left\{
\begin{aligned}
&1,~~\text{if}~|z|\leq m,\\
&0,~~\text{if}~|x|\geq m+1,\\
&>0,~\text{otherwise}.
\end{aligned}
\right.
\end{equation*}
Set
\begin{equation*}
f_1=hf+a(1-h),~~f_2=hf,
\end{equation*}
where $a$ is a constant larger than the upper bound of $f$. Then $f_1,f_2\in C_{0}^{\infty}$, $f_1\geq f\geq f_2$ and $f_1=f=f_2$ on the set $\{z:|z|<m\}$. Since
\begin{equation*}
\mathcal{E}_tf(x)\geq\bar{\mathcal{E}}_tf(y),~~f(x)=f(y),
\end{equation*}
we have
\begin{equation*}
\frac{1}{t}[\mathcal{E}_tf_1(x)-f_1(x)]\geq\frac{1}{t}[\bar{\mathcal{E}}_tf_2(x)-f_2(x)].
\end{equation*}
The assertion now follows by letting $t\downarrow 0$.
\end{proof}
\begin{lemma}\label{lem2}
If $\mathcal{E}_t\geq\bar{\mathcal{E}}_t$, then (D2) holds.
\end{lemma}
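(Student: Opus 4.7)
The plan is to apply Lemma~\ref{lem1} to one particularly simple test function. Fix an index $i \in \{1,\ldots,n\}$ and a pair $x \geq y$ with $x_i = y_i$. I will choose $f \in \mathcal{M} \cap C_b^\infty$ that agrees with the $i$-th coordinate projection $z \mapsto z_i$ on an open neighborhood $U$ of $\{x,y\}$. A concrete realization is $f(z) = \phi(z_i)$, where $\phi:\mathbb{R}\to\mathbb{R}$ is smooth, non-decreasing, bounded, and equal to the identity on some open interval around the common value $x_i = y_i$; such a $\phi$ is obtained by a standard cutoff of the identity against two horizontal asymptotes. Then $f$ is bounded, Lipschitz, smooth and monotone (since $z \leq \bar z$ implies $z_i \leq \bar z_i$), and by construction $f(x) = x_i = y_i = f(y)$, so the hypotheses of Lemma~\ref{lem1} are satisfied.

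The reason for this choice is that on $U$ one has $\partial_z f = e_i$ and $\partial^2_{zz} f = 0$, so the quadratic terms $\langle \partial^2_{zz} f\,\sigma, \sigma\rangle$ and $\langle \partial^2_{zz} f\,\bar\sigma, \bar\sigma\rangle$ contribute nothing to $Lf(x)$ and $\bar L f(y)$, and the first-order term $\langle \partial_z f, \cdot\rangle$ selects only the $i$-th coordinate. Lemma~\ref{lem1} therefore collapses to
\begin{equation*}
b_i(x) + G\bigl([(h_{l,k})_i(x) + (h_{k,l})_i(x)]_{l,k=1}^{d}\bigr) \;\geq\; \bar{b}_i(y) + G\bigl([(\bar{h}_{l,k})_i(y) + (\bar{h}_{k,l})_i(y)]_{l,k=1}^{d}\bigr).
\end{equation*}

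Writing $B$ and $A$ respectively for the two $G$-arguments on the left and right, this inequality reads $b_i(x) - \bar{b}_i(y) \geq G(A) - G(B)$. To convert it into (D2), I will invoke the sub-additivity of $G$: since $B = (B-A) + A$, one has $G(B) \leq G(B-A) + G(A)$, i.e.\ $G(A) - G(B) \geq -G(B-A)$. Chaining the two inequalities yields
\begin{equation*}
b_i(x) - \bar{b}_i(y) + G(B-A) \;\geq\; 0,
\end{equation*}
which is precisely (D2). The only step that needs any care is the existence of the test function $f$, but the cutoff construction above settles it; everything else is a direct computation from Lemma~\ref{lem1} together with one application of sub-additivity.
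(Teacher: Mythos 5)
Your proof is correct and follows essentially the same route as the paper: choose $f\in\mathcal{M}\cap C_b^\infty$ agreeing with $z\mapsto z_i$ near $\{x,y\}$, apply Lemma~\ref{lem1}, and use sub-additivity of $G$ to pass from $b_i(x)+G(B)\geq \bar b_i(y)+G(A)$ to the form in (D2). The paper leaves the cutoff construction and the sub-additivity step implicit; you have simply spelled them out.
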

\begin{proof}
For given $i$, let $u\leq v$ with $u_i=v_i$. Choose $f\in\mathcal{M}\bigcap C_{b}^{\infty}$ such that in a neighborhood of $\{u,v\}$,
\[
f(x)=x_i.
\]
Then by Lemma (\ref{lem1}), we get
\begin{equation*}
b_i(v)-\bar{b}_i(u)+G([(h_{l,k})_i(v)+(h_{k,l})_i(v)]_{l,k=1}^{d}-[(\bar{h}_{l,k})_i(u)+(\bar{h}_{k,l})_i(u)]_{l,k=1}^{d})\geq 0.
\end{equation*}
\end{proof}
\begin{lemma}\label{lem3}
If $\mathcal{E}_t\geq\bar{\mathcal{E}}_t$, then (D1) holds.
\end{lemma}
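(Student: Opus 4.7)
The plan is to mirror the test-function computation used in the proof of Theorem \ref{5.2}, but with Lemma \ref{lem1} playing the role of the direct monotonicity inequality for $\mathcal{E}_{t}$. Concretely, whenever we can produce $f\in \mathcal{M}\cap C_{b}^{\infty}$ with $f(x^{(1)})=f(x^{(2)})$ for some $x^{(1)}\leq x^{(2)}$, Lemma \ref{lem1} gives $Lf(x^{(2)})\geq \bar L f(x^{(1)})$; plugging in test functions that have controlled derivatives at the two comparison points then isolates the coefficients of $L$ and $\bar L$ and forces them to match.

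For the diagonal products $\sigma_{li_0}\sigma_{ki_0}$, fix $i_0\in\{1,\dots,n\}$ and $x^{(1)}\leq x^{(2)}$ with $x^{(1)}_{i_0}=x^{(2)}_{i_0}$. I would take a sequence $f_m\in \mathcal{M}\cap C_{b}^{\infty}$ coinciding with $(x_{i_0}-x_{i_0}^{(1)}+1)^{2m+1}$ in a neighbourhood of $\{x^{(1)},x^{(2)}\}$, so that $f_m(x^{(1)})=f_m(x^{(2)})=1$ and the only non-vanishing partial derivatives at these points are $\partial_{x_{i_0}}f_m=2m+1$ and $\partial_{x_{i_0}}^{2}f_m=(2m+1)(2m)$. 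Substituting into $Lf_m(x^{(2)})\geq \bar L f_m(x^{(1)})$, using positive homogeneity of $G$ to divide by $(2m+1)$, then subadditivity to strip off the drift and $h$-contributions, and finally letting $m\to\infty$, I would obtain
\[
G\bigl([\sigma_{li_0}\sigma_{ki_0}]_{l,k=1}^{d}(x^{(2)})\bigr)\;\geq\;G\bigl([\bar{\sigma}_{li_0}\bar{\sigma}_{ki_0}]_{l,k=1}^{d}(x^{(1)})\bigr).
\]
Repeating the argument with the test function built from $(x_{i_0}-x_{i_0}^{(1)}-1)^{2m+1}$ flips the sign of the second-derivative contribution and produces the companion inequality with $-B$ in place of $B$. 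Combined with the non-degeneracy (H3) of $G$, the two inequalities upgrade to the matrix equality $[\sigma_{li_0}\sigma_{ki_0}]_{l,k}(x^{(2)})=[\bar{\sigma}_{li_0}\bar{\sigma}_{ki_0}]_{l,k}(x^{(1)})$.

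For the off-diagonal products $\sigma_{li}\sigma_{kj}$ with $i\neq j$, I would use test functions agreeing near $\{x^{(1)},x^{(2)}\}$ with $\pm(x_i+x_j-x_i^{(1)}-x_j^{(1)}\pm 1)^{2m+1}$, where now $x^{(1)}\leq x^{(2)}$ is chosen so that both $x^{(1)}_{i}=x^{(2)}_{i}$ and $x^{(1)}_{j}=x^{(2)}_{j}$. This is precisely the device employed in part (b) of the proof of Theorem \ref{5.2}, and the same limiting procedure, together with the diagonal identities already established, yields $[\sigma_{li}\sigma_{kj}]_{l,k}(x^{(2)})=[\bar{\sigma}_{li}\bar{\sigma}_{kj}]_{l,k}(x^{(1)})$. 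Specialising $x^{(1)}=x^{(2)}=x$ gives the pointwise identity $\sigma_{il}\sigma_{jk}\equiv\bar{\sigma}_{il}\bar{\sigma}_{jk}$, while keeping $x^{(1)}\neq x^{(2)}$ (with the $i$th and $j$th coordinates agreeing) shows that this common value depends only on $x_i$ and $x_j$, which is exactly (D1).

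The main obstacle I foresee is not in setting up the test functions but in passing from the limiting $G$-inequality (which, since $G$ is only sublinear, is a priori weaker than a pointwise inequality between the symmetric matrices) to the matrix equality. The remedy is to pair the inequality obtained from the ``$+1$'' shift with the one obtained from the ``$-1$'' shift and invoke (H3); because $G$ is not linear one cannot merely subtract one inequality from the other, so this step must be argued carefully, just as is implicit in the corresponding part of Theorem \ref{5.2}.
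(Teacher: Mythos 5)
Your proposal matches the paper's proof essentially step for step: the same test functions $(x_i-u_i\pm 1)^{2m+1}$ and $(x_i+x_j-u_i-u_j\pm 1)^{2m+1}$, the same appeal to Lemma \ref{lem1} to get $Lf_m(v)\geq \bar{L}f_m(u)$, and the same pairing of the two shifted test functions to upgrade the one-sided limiting $G$-inequalities to the identity $\sigma_{il}\sigma_{jk}=\bar{\sigma}_{il}\bar{\sigma}_{jk}$ (with the equality-of-arguments case giving the dependence on $x_i,x_j$ only). The only cosmetic difference is the normalization --- one should divide by $(2m)(2m+1)$, the coefficient of the second-derivative term, so that the drift and $h$-contributions carry a factor $1/(2m)$ and vanish in the limit --- and the caveat you raise about $G$ being merely sublinear is precisely the delicate point that the paper's own proof also leaves implicit.
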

\begin{proof}
The proof consists of two steps.
\begin{description}
\item[(1)] For given $i$, let $u\leq v$ with $u_i=v_i$. Choose $f_m\in\mathcal{M}\bigcap C_{b}^{\infty}$ ($m\in\mathbb{N}$) such that in a neighborhood of $\{u,v\}$,
\begin{equation*}
f_m(x)=(x_i-u_i+1)^{2m+1}.
\end{equation*}
By Lemma (\ref{lem1}), we have
\begin{align*}
&G([\frac{(h_{l,k})_i(v)+(h_{k,l})_i(v)}{2m}+\sigma_{il}(v)\sigma_{ik}(v)]_{l,k=1}^{d}-[\frac{(\bar{h}_{l,k})_i(u)+(\bar{h}_{k,l})_i(u)}{2m}+\bar{\sigma}_{il}(u)\bar{\sigma}_{ik}(u)]_{l,k=1}^{d})\\
&\geq G([\frac{(h_{l,k})_i(v)+(h_{k,l})_i(v)}{2m}+\sigma_{il}(v)\sigma_{ik}(v)]_{j,k=1}^{d})-G([\frac{(\bar{h}_{l,k})_i(u)+(\bar{h}_{k,l})_i(u)}{2m}+\bar{\sigma}_{il}(u)\bar{\sigma}_{ik}(u)]_{l,k=1}^{d})\\
&\geq\frac{1}{2m}[\bar{b}_i(u)-b_i(v)].
\end{align*}
Since $m$ is arbitrary, we deduce that:
\begin{equation*}
[\sigma_{il}(v)\sigma_{ik}(v)]_{l,k=1}^{d}\geq[\bar{\sigma}_{il}(u)\bar{\sigma}_{ik}(u)]_{l,k=1}^{d}.
\end{equation*}
Replacing $f_m$ with $(x_i-u_i-1)^{2m+1}$ in the neighborhood of $\{u,v\}$, we obtain the inverse inequality. Therefore $\sigma_{il}(v)\sigma_{ik}(v)=\bar{\sigma}_{il}(u)\bar{\sigma}_{ik}(u)$.
\item[(2)] For given $i\neq j$ and $u\leq v$: $u_i=v_i,~u_j=v_j$, choose $f_m\in\mathcal{M}\bigcap C_{b}^{\infty}$ ($m\in\mathbb{N}$) such that in a neighborhood of $\{u,v\}$,
\begin{equation*}
f_m(x)=(x_i+x_j-u_i-u_j+1)^{2m+1}.
\end{equation*}
By (1) and Lemma (\ref{lem1}), we get
\begin{align*}
&G([\frac{(h_{l,k})_i(v)+(h_{k,l})_i(v)+h_{l,k})_j(v)+(h_{k,l})_j(v)}{2m}+\sigma_{il}(v)\sigma_{jk}(v)]_{l,k=1}^{d}\\
&\ \ \ -[\frac{(\bar{h}_{l,k})_i(u)+(\bar{h}_{k,l})_i(u)+\bar{h}_{l,k})_i(u)+(\bar{h}_{k,l})_j(u)}{2m}+\bar{\sigma}_{il}(u)\bar{\sigma}_{jk}(u)]_{l,k=1}^{d})\\
&\geq G([\frac{(h_{l,k})_i(v)+(h_{k,l})_i(v)+(h_{l,k})_i(v)+(h_{k,l})_j(v)}{2m}+\sigma_{il}(v)\sigma_{jk}(v)]_{j,k=1}^{d})\\
&\ \ \ -G([\frac{(\bar{h}_{l,k})_i(u)+(\bar{h}_{k,l})_i(u)+(\bar{h}_{l,k})_i(u)+(\bar{h}_{k,l})_j(u)}{2m}+\bar{\sigma}_{il}(u)\bar{\sigma}_{jk}(u)]_{l,k=1}^{d})\\
&\geq\frac{1}{2m}[\bar{b}_i(u)+\bar{b}_j(u)-b_i(v)-b_j(v)]
\end{align*}
and
\begin{equation*}
[\sigma_{il}(v)\sigma_{jk}(v)]_{l,k=1}^{d}\geq[\bar{\sigma}_{il}(u)\bar{\sigma}_{jk}(u)]_{l,k=1}^{d}.
\end{equation*}
Similarly, we have the inverse inequality and hence $\sigma_{il}(v)\sigma_{jk}(v)=\bar{\sigma}_{il}(u)\bar{\sigma}_{jk}(u).$
\end{description}
\end{proof}

\begin{proof}
[Proof of theorem \ref{oder}]The proof follows directly from Lemma(\ref{lem2}) and Lemma (\ref{lem3}).
\end{proof}
\begin{corollary}\label{oder1}
If $\mathcal{E}_t\geq\bar{\mathcal{E}}^{'}_t$ then the following two conditions hold:
\begin{description}
\item[(D3)] for all $i,j$, $\sigma_{il}\sigma_{jk}$ depends only on $x_i$ and $x_j$, $l,k=1,\ldots, d.$
\item[(D4)] for all $i$, $b_i(x)-\bar{b}_i(y)+G([(h_{l,k})_i(x)+(h_{k,l})_i(x)]_{l,k=1}^{d}-[(\bar{h}_{l,k})_i(y)+(\bar{h}_{k,l})_i(y)]_{l,k=1}^{d})\geq 0$ whenever $x\geq y$ with $x_i=y_i$.
\end{description}
\end{corollary}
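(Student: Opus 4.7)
The plan is to mirror the proof of Theorem \ref{oder}, since $\bar L'$ differs from $\bar L$ only by using the diffusion coefficient $\sigma$ (the same one appearing in $L$) in place of $\bar\sigma$. The Lemmas \ref{lem1}--\ref{lem3} carry over after this single substitution, and condition (D1) naturally collapses to (D3): once both generators share the same diffusion coefficient $\sigma$, the matching identity $\sigma_{il}\sigma_{jk}\equiv\bar\sigma_{il}\bar\sigma_{jk}$ is automatic, leaving only the $x_i,x_j$-dependence claim.

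First I would establish the analog of Lemma \ref{lem1}: if $\mathcal{E}_t\geq\bar{\mathcal{E}}'_t$, then $Lf(x)\geq \bar L'f(y)$ for all $x\geq y$ and all $f\in\mathcal{M}\cap C_b^\infty$ with $f(x)=f(y)$. The argument is identical to that of Lemma \ref{lem1}: pick a smooth cutoff $h$ equal to $1$ on a ball containing $\{x,y\}$, set $f_1=hf+a(1-h)$ and $f_2=hf$ so that $f_1\geq f\geq f_2$ pointwise while $f_1=f=f_2$ on the ball. Then $\mathcal{E}_tf_1(x)\geq\bar{\mathcal{E}}'_tf_2(y)$, and dividing by $t$ and sending $t\downarrow 0$ yields the pointwise generator inequality.

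Next, to get (D4), I would follow Lemma \ref{lem2}: given $u\leq v$ with $u_i=v_i$, take $f\in\mathcal{M}\cap C_b^\infty$ equal to $x_i$ in a neighborhood of $\{u,v\}$. Since all second derivatives vanish on this $f$, the generator inequality reduces directly to
\[
b_i(v)+G\bigl([(h_{l,k})_i(v)+(h_{k,l})_i(v)]_{l,k=1}^d\bigr)\geq \bar b_i(u)+G\bigl([(\bar h_{l,k})_i(u)+(\bar h_{k,l})_i(u)]_{l,k=1}^d\bigr),
\]
which is precisely (D4). For (D3) I would replicate the two-step computation of Lemma \ref{lem3}, using $f_m(x)=(x_i-u_i+1)^{2m+1}$ and then $f_m(x)=(x_i+x_j-u_i-u_j+1)^{2m+1}$ in a neighborhood of $\{u,v\}$. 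Letting $m\to\infty$ suppresses the $1/(2m)$ contributions from $b$ and $h$ and leaves only the diffusion terms, producing $\sigma_{il}(v)\sigma_{ik}(v)\geq \sigma_{il}(u)\sigma_{ik}(u)$ and its two-index analog; replacing $f_m$ with $(x_i-u_i-1)^{2m+1}$ (resp.\ its two-index version) gives the reverse inequality. Hence $\sigma_{il}\sigma_{jk}(v)=\sigma_{il}\sigma_{jk}(u)$ whenever $u\leq v$ agree in coordinates $i$ and $j$, which is (D3) by varying the remaining coordinates.

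I do not expect a genuine obstacle; the whole corollary is a transparent specialization of Theorem \ref{oder}. The only point requiring attention is bookkeeping in the $G$-subadditivity step of the analog of Lemma \ref{lem3}, where the one-sided bound on differences of $G$-values must be extracted exactly as in the original argument, but with $\bar\sigma$ replaced by $\sigma$ on the right-hand side throughout.
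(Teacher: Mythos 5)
Your proposal is correct and follows exactly the route the paper intends: the corollary is a specialization of Theorem \ref{oder}, obtained by rerunning Lemmas \ref{lem1}, \ref{lem2} and \ref{lem3} with $\bar{L}'$ (i.e. with $\bar{\sigma}$ replaced by $\sigma$), so that the identity $\sigma_{il}\sigma_{jk}\equiv\bar{\sigma}_{il}\bar{\sigma}_{jk}$ in (D1) becomes vacuous and only the $x_i,x_j$-dependence statement (D3) survives, while (D4) comes from the linear test function exactly as in Lemma \ref{lem2}. The paper leaves this proof implicit, and your reconstruction matches it.
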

\begin{corollary}\label{oder1}
Suppose the following two conditions hold:
\begin{description}
\item[(D3)] for all $i,j$, $\sigma_{il}\sigma_{jk}$ depends only on $x_i$ and $x_j$, $l,k=1,\ldots, d.$
\item[(D4')] for all $i$, $\bar{b}_i(x)-b_i(y)+G([(\bar{h}_{l,k})_i(x)+(\bar{h}_{k,l})_i(x)]_{l,k=1}^{d}-[(h_{l,k})_i(y)+(h_{k,l})_i(y)]_{l,k=1}^{d})\leq 0$ whenever $x\leq y$ with $x_i=y_i$.
\end{description}
then $\mathcal{E}_t\geq\bar{\mathcal{E}}^{'}_t$.
\end{corollary}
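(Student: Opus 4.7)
The plan is to reduce the corollary to the multidimensional comparison theorem, Theorem \ref{com}, following the same template as the proof of Theorem \ref{5.1}. Fix $x\ge\bar x$ and let $X_t=X_t^{0,x}$ be the $G$-diffusion driven by $(b,h,\sigma)$ started at $x$, and $Y_t=Y_t^{0,\bar x}$ the $G$-diffusion driven by $(\bar b,\bar h,\sigma)$ started at $\bar x$. The crucial structural feature, which is what distinguishes this corollary from the general order-preservation setting of Theorem \ref{oder}, is that both processes carry the \emph{same} diffusion coefficient $\sigma$; it is this matching of the martingale parts that allows Theorem \ref{com} to be invoked directly, without any additional identification of $\sigma$ and $\bar\sigma$.

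Next I would verify that (D3) and (D4') specialize to the hypotheses (B1) and (B2) of Theorem \ref{com} with $Y$ in the role of the ``lower'' process and $X$ in the role of the ``upper'' one. Condition (D4') is literally (B1) under this relabeling: it asserts $\bar b_i(x)-b_i(y)+G(\cdots)\le 0$ whenever $x\le y$ and $x_i=y_i$, i.e.\ the difference of the lower drift/$h$-term from the upper one is nonpositive in the required quasi-monotone sense. For (B2) one specializes (D3) to $j=i$, $k=l$ to conclude that $\sigma_{il}^{2}$ depends only on $x_i$; combining this with the Lipschitz continuity of $\sigma_{il}$ inherited from (H2) and the connectedness of each slice $\{x_i\}\times\mathbb{R}^{n-1}$ forces $\sigma_{il}$ itself to depend only on $x_i$, which is exactly (B2). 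This continuity/connectedness upgrade from a product condition on $\sigma$ to a coordinatewise one is the only place a small but non-trivial argument is needed; the rest of the reduction is bookkeeping on indices.

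Once the hypotheses are verified, Theorem \ref{com} yields $Y_t\le X_t$ coordinatewise, q.s., for every $t\in[0,T]$. Since any $f\in\mathcal{M}$ is bounded Lipschitz and monotone, this gives $f(X_t)\ge f(Y_t)$ q.s., and the monotonicity of the sublinear expectation $\hat{\mathbb{E}}$ yields
\[
\mathcal{E}_t f(x)=\hat{\mathbb{E}}[f(X_t)]\ \ge\ \hat{\mathbb{E}}[f(Y_t)]=\bar{\mathcal{E}}'_t f(\bar x),
\]
which is the required $\mathcal{E}_t\ge\bar{\mathcal{E}}'_t$.
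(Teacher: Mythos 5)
Your proof is correct and follows essentially the same route as the paper's: reduce to the multidimensional comparison theorem (Theorem \ref{com}), with both processes sharing the diffusion coefficient $\sigma$ and with $(\bar b,\bar h)$ playing the role of the lower coefficients so that (D4') becomes (B1), then conclude via the monotonicity of $f$ and of $\hat{\mathbb{E}}$. The only difference is that you spell out the verification that (D3) yields (B2) --- the continuity/connectedness upgrade from ``$\sigma_{il}^{2}$ depends only on $x_i$'' to ``$\sigma_{il}$ depends only on $x_i$'' --- a step the paper's two-line proof leaves entirely implicit.
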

\begin{proof}
Suppose (D3)and (D4') hold. By Theorem (\ref{com}), we have $\forall x\geq\bar{x}$, $X^{0,x}_t\geq \bar{X}^{0,\bar{x}}_t$ q.s.. Then by the monotonicity of $f$, the results follows.
\end{proof}

%\begin{lemma}
%Suppose that () and () hold and are all bounded, If one of $\{\mathcal{E}_t\}$ and $\{\bar{\mathcal{E}}_t\}$ is monotone, then $\mathcal{E}_t\geq\bar{\mathcal{E}}_t$.
%\end{lemma}
%\begin{proof}
%(a) Without loss of generality, assume that $\{\bar{\mathcal{E}}_t\}$ is monotone. We can also assume that are smooth. Otherwise, choose $\varphi\in C^{\infty}_0(\mathbb{R})$, $\varphi\geq 0$, $\int_{\mathbb{R}}\varphi=1$ and define\\
%For the last inequality, we have used the monotonicity of $\bar{\mathcal{E}}_t$ and Lemma. Thus () holds for and , Clearly , every () satisfies (). Since and converge uniformly to respectively, by a convergence theorem, the proof of the lemma is reduced to the smooth case.\\
%(b) By () and (), we have
%\begin{equation}
%Lf(x)\geq\bar{L}f(x),~~~x\in\mathbb{R}^n
%\end{equation}
%for all $f\in\mathcal{M}\bigcap C^{\infty}_0$. On the other hand, since the coefficients of the operators are assumed by (a) to be bounded and smooth, we have the integration by parts formula.
%\end{proof}
%\begin{theorem}
%Let $u(t,x)=\hat{\mathbb{E}}[f(X_{t}^{0,x})]$, then $u$ is the unique viscosity solution of the following PDE:
%\begin{equation}
%\left\{
%\begin{aligned}
%&\partial_tu-F(\partial_xu,\partial^{2}_{xx}u)=0,\\
%&u(0,x)=f(x).
%\end{aligned}
%\right.
%\end{equation}
%where
%\begin{align*}
%F(\partial_xu,\partial^{2}_{xx}u)=\langle \partial_xu,b\rangle+G(\langle\partial_xu,h\rangle +\langle\partial_{xx}^2u\sigma,\sigma\rangle)
%\end{align*}
%\end{theorem}
\begin{corollary}\label{5.9}
If $\mathcal{E}_t\geq\bar{\mathcal{E}}_t$ then the following hold:
\begin{description}
\item[(D1)] for all $i,j$, $\sigma_{il}\sigma_{jk}\equiv\bar{\sigma}_{il}\bar{\sigma}_{jk}$ and $\sigma_{il}\sigma_{jk}$ depends only on $x_i$ and $x_j$, $l,k=1,\ldots, d$
\item[(D2')] for all $x,K\in\mathbb{R}^n$, $K\geq0$, $K^*(b(x)-\bar{b}(x))+G([K^*((h_{l,k})(x)+(h_{k,l})(x))]_{l,k=1}^{d}-[K^*((\bar{h}_{l,k})_i(x)+(\bar{h}_{k,l})_i(x))]_{l,k=1}^{d})\geq 0$.
\end{description}
\end{corollary}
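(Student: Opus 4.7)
The plan is to reduce this corollary to the infinitesimal comparison machinery already built up in Lemma~\ref{lem1} and Theorem~\ref{oder}. Part (D1) is verbatim the conclusion (D1) of Theorem~\ref{oder}, so it carries over with no new work. The genuinely new content is (D2'), which strengthens (D2) of Theorem~\ref{oder} by (i) collapsing the two evaluation points $x,y$ into a single point $x$, and (ii) allowing an arbitrary non-negative weight vector $K$ in place of the coordinate directions $e_i$. The strategy is to feed Lemma~\ref{lem1} a test function whose gradient at $x$ equals the prescribed $K$ and whose Hessian at $x$ vanishes, so that the $\sigma$-pieces of both generators drop out cleanly.

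First I would fix $x\in\mathbb{R}^n$ and $K\geq 0$ and build $f\in\mathcal{M}\cap C_b^{\infty}$ agreeing with the affine form $z\mapsto \langle K,z\rangle$ on a neighborhood of $x$. The natural choice is $f(z)=\phi(\langle K,z\rangle)$ where $\phi:\mathbb{R}\to\mathbb{R}$ is a smooth bounded non-decreasing function equal to the identity (up to an additive constant) on a neighborhood of $\langle K,x\rangle$; monotonicity of $f$ is automatic because $K\geq 0$ forces $\langle K,\cdot\rangle$ to be monotone, and composition with the non-decreasing $\phi$ preserves this. At $x$ we have $\partial_x f(x)=K$ and $\partial_{xx}^2 f(x)=0$. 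Next, apply Lemma~\ref{lem1} with the degenerate choice $y=x$: the hypotheses $x\geq y$ and $f(x)=f(y)$ hold trivially, so we obtain $Lf(x)\geq \bar{L}f(x)$, which in view of the vanishing Hessian reads
\[
K^{*}b(x)+G\bigl([K^{*}(h_{l,k}(x)+h_{k,l}(x))]_{l,k=1}^{d}\bigr)\geq K^{*}\bar{b}(x)+G\bigl([K^{*}(\bar{h}_{l,k}(x)+\bar{h}_{k,l}(x))]_{l,k=1}^{d}\bigr).
\]

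Writing $H(x)$ and $\bar{H}(x)$ for the two matrices inside $G$, sub-additivity of $G$ gives $G(H-\bar{H})\geq G(H)-G(\bar{H})$; adding $G(H(x)-\bar{H}(x))$ to both sides of the displayed inequality and rearranging then yields exactly (D2'). The only delicate point is the test-function construction — namely verifying that the linear profile $\langle K,\cdot\rangle$ admits a bounded monotone smooth truncation that leaves its 1-jet at $x$ untouched — but this is a standard cutoff argument, and once it is in place the conclusion is an immediate consequence of Lemma~\ref{lem1} combined with the sublinear structure of $G$.
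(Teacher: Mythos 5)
Your proposal is correct and follows essentially the same route as the paper: the paper also obtains (D1) directly from Theorem \ref{oder} and proves (D2') by taking $f\in\mathcal{M}\cap C_b^{\infty}$ equal to the affine function $z\mapsto\sum_i K_i(z_i-x_i)$ near $x$ and comparing $\lim_{t\to 0^+}\frac{1}{t}(\mathcal{E}_tf(x)-f(x))$ with the same quantity for $\bar{\mathcal{E}}_t$, which is exactly your application of Lemma \ref{lem1} with $y=x$. Your explicit use of sub-additivity to pass from $G(H)-G(\bar H)$ to $G(H-\bar H)$ fills in a step the paper leaves implicit.
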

\begin{proof}
First, we suppose that $\mathcal{E}_t\geq\bar{\mathcal{E}}_t$. Then (D1) holds directly from Theorem (\ref{oder}). For fixed $\bar{x}\in\mathbb{R}^n$, $K\in\mathbb{R}^n$, $K\geq 0$, we take $f\in\mathcal{M}\bigcap C_{b}^{\infty}$ such that in a neighborhood of $\bar{x}$, $f(x)=\sum_{i=1}^{n}K_i(x_i-\bar{x}_i)$. Then we have $\lim\limits{t\rightarrow 0}\frac{1}{t}(\mathcal{E}_tf(\bar{x})-f(\bar{x}))\geq\lim\limits{t\rightarrow 0}\frac{1}{t}(\bar{\mathcal{E}}_tf(\bar{x})-f(\bar{x}))$. Thus (D2')holds.
\end{proof}
\begin{theorem}\label{5.10}
Assume (H3) and assume that $\sigma\sigma^*$ (or resp. $\bar{\sigma}\bar{\sigma}^*$) is uniformly positive definite, i.e., there exists a constant $\beta>0$, such that for all $y\in\mathbb{R}^n$, $x\in\mathbb{R}^n$, $y^*\sigma(x)\sigma^*(x)y\geq\beta|y|^2$. If one of $\mathcal{E}_t$ and $\bar{\mathcal{E}}_t$ is monotone, if the following hold:
\begin{description}
\item[(D1)] for all $i,j$, $\sigma_{il}\sigma_{jk}\equiv\bar{\sigma}_{il}\bar{\sigma}_{jk}$ and $\sigma_{il}\sigma_{jk}$ depends only on $x_i$ and $x_j$, $l,k=1,\ldots, d$
\item[(D5)] for all $x,K\in\mathbb{R}^n$, $K\geq0$, $K^*(\bar{b}(x)-b(x))+G([K^*((\bar{h}_{l,k})(x)+(\bar{h}_{k,l})(x))]_{l,k=1}^{d}-[K^*((h_{l,k})_i(x)+(h_{k,l})_i(x))]_{l,k=1}^{d})\leq 0$.
\end{description}
 then $\mathcal{E}_t\geq\bar{\mathcal{E}}_t$.
\end{theorem}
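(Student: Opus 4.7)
I will compare the two semigroups through the nonlinear parabolic PDEs that they solve. Fix $f\in\mathcal{M}$ and set $u(t,x):=\mathcal{E}_tf(x)$, $\bar u(t,x):=\bar{\mathcal{E}}_tf(x)$. By the nonlinear Feynman--Kac formula (Theorem~\ref{theA.9}), $u$ and $\bar u$ are viscosity solutions of $\partial_tu=Lu$ and $\partial_t\bar u=\bar L\bar u$ on $[0,T]\times\mathbb{R}^n$ with common initial datum $f$. The plan is to establish the pointwise inequality $u\geq\bar u$ and then to chain it with the monotonicity of whichever of $u,\bar u$ comes from the monotone semigroup: for $x\geq y$, $\bar{\mathcal{E}}_tf(y)=\bar u(t,y)\leq u(t,y)\leq u(t,x)=\mathcal{E}_tf(x)$ when $\mathcal{E}_t$ is monotone, and $\bar u(t,y)\leq\bar u(t,x)\leq u(t,x)$ when $\bar{\mathcal{E}}_t$ is monotone.

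The central algebraic step uses (D1) to collapse the second-order part of the generator difference. Since $\sigma_{il}\sigma_{jk}\equiv\bar\sigma_{il}\bar\sigma_{jk}$, for every $C^2$ function $\varphi$,
\begin{equation*}
\langle D^2_{xx}\varphi\,\sigma_l,\sigma_k\rangle=\sum_{i,j}(\partial^2_{x_ix_j}\varphi)\,\sigma_{il}\sigma_{jk}=\langle D^2_{xx}\varphi\,\bar\sigma_l,\bar\sigma_k\rangle=:M^{\varphi}_{lk}.
\end{equation*}
Writing the matrix inside $G$ in $L\varphi$ (resp.\ $\bar L\varphi$) as $M^{\varphi}+H^{\varphi}$ (resp.\ $M^{\varphi}+\bar H^{\varphi}$), with $H^{\varphi}_{lk}=\langle D\varphi,h_{lk}+h_{kl}\rangle$ and analogously $\bar H^{\varphi}_{lk}$, the sub-additivity consequence $G(A)-G(B)\geq -G(B-A)$ yields
\begin{equation*}
L\varphi-\bar L\varphi\geq\langle b-\bar b,\,D\varphi\rangle-G\big(\big[\langle D\varphi,\,(\bar h_{lk}+\bar h_{kl})-(h_{lk}+h_{kl})\rangle\big]_{l,k=1}^{d}\big).
\end{equation*}
At any point at which $D\varphi\geq 0$, applying (D5) with $K=D\varphi$ states that the right-hand side is non-negative, so $L\varphi\geq\bar L\varphi$ holds at every such point.

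When $\mathcal{E}_t$ is monotone, $u(t,\cdot)$ is monotone and $D_xu\geq 0$, so applying the previous bound with $\varphi=u$ gives $Lu\geq\bar Lu$; hence $u$ is a supersolution of the equation $\partial_tv=\bar Lv$ solved by $\bar u$. When $\bar{\mathcal{E}}_t$ is monotone, the same bound with $\varphi=\bar u$ gives $L\bar u\geq\bar L\bar u=\partial_t\bar u$, so $\bar u$ is a subsolution of $\partial_tv=Lv$ which $u$ solves. In either case, the uniform positivity of $\sigma\sigma^{*}$ together with (H3) renders the relevant G-PDE uniformly parabolic, so the viscosity comparison principle of Peng~\cite{P10} yields $u\geq\bar u$ on $[0,T]\times\mathbb{R}^n$, and the reduction of the first paragraph completes the proof. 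The main technical obstacle is that $D\varphi\geq 0$ has no pointwise meaning when $\varphi$ is only a viscosity solution; this is handled by running the whole manipulation on smooth test functions touching $u$ from below (resp.\ $\bar u$ from above) at contact points, whose contact gradients inherit componentwise non-negativity from the monotonicity of the semigroup and whose admissibility is underwritten by the non-degeneracy hypothesis.
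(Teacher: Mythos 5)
Your proof is essentially correct, but it follows a genuinely different route from the paper's. The paper argues probabilistically: it represents $\mathcal{E}_tf$ and $\bar{\mathcal{E}}_tf$ as solutions of two $G$-BSDEs driven by the \emph{same} driftless forward diffusion $d\bar X=\sigma_i(\bar X)\,dB^i$, absorbing the drift and the $h_{ij}$ into the BSDE generators through $\Sigma=(\sigma\sigma^*)^{-1}\sigma$ (this is precisely where the uniform positive definiteness of $\sigma\sigma^*$ is indispensable); it then invokes Krylov's interior regularity theorem to get $\bar u\in C^{1+\alpha/2,2+\alpha}$, identifies $\bar Z_r=\sigma^*\partial_x\hat u$ via $G$-It\^o's formula, deduces $\partial_x\hat u\geq0$ from monotonicity, and concludes with the $G$-BSDE comparison theorem (Proposition~\ref{con-pro1}) applied under (D5). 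You instead stay entirely at the PDE level: (D1) makes the second-order blocks of $L$ and $\bar L$ coincide, sub-additivity gives $G(M+H)-G(M+\bar H)\geq -G(\bar H-H)$, and (D5) evaluated at $K=D\phi\geq0$ turns $u$ into a viscosity supersolution of the barred equation (resp.\ $\bar u$ into a subsolution of the unbarred one), after which the viscosity comparison principle finishes. Your handling of the ``$D\varphi\geq0$ has no pointwise meaning'' obstacle is sound: a smooth test function touching a coordinatewise nondecreasing function from below (resp.\ above) does have nonnegative gradient at the contact point, which is exactly what the super- (resp.\ sub-) solution test requires. What your route buys is the elimination of both the Krylov regularity step and the $\Sigma$-trick, so the uniform ellipticity of $\sigma\sigma^*$ plays no essential role in your argument; what it costs is reliance on a comparison principle between viscosity sub- and supersolutions of the $G$-PDE, which is, however, exactly the uniqueness assertion already quoted in Theorem~\ref{theA.9} (the appropriate reference is Peng~\cite{Peng4}, Appendix~C, rather than \cite{P10}). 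The paper's route, by contrast, yields the slightly stronger pathwise statement $Y^{0,x,t,f}_r\geq\bar Y^{0,x,t,f}_r$ for all $r$.
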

\begin{proof}
We now suppose (D1) and (D5) hold. Without lost of generalization, we assume $\bar{\mathcal{E}}$ is monotone. Here we denote $(\sigma\sigma^{*})^{-1}\sigma$ by $\Sigma$. Let $f\in\mathcal{M}$ and we consider the following $G$-SDE and $G$-BSDEs:
\begin{equation*}
\bar{X}^{0,x}_{t}=x+\int_{0}^{t}\sigma_i(\bar{X}^{0,x}_{r})dB^{i}_r,\ \ \ t\in[0,T],
\end{equation*}
\begin{align*}
Y^{0,x,t,f}_{s}=f(\bar{X}^{0,x}_{t})&+\int_{s}^{t}b^{*}(\bar{X}^{0,x}_{r})\Sigma(\bar{X}^{0,x}_{r})Z^{0,x,t,f}_{r}dr+\int_{s}^{t}h_{ij}^{*}(\bar{X}^{0,x}_{r})\Sigma(\bar{X}^{0,x}_{r})Z^{0,x,t,f}_{r}d\langle B^i, B^j\rangle_r\\
&-\int_{s}^{t}Z^{0,x,t,f}_{r}dB_r-(K_s-K_t),\ \ \ s\in[0,t]
\end{align*}
and
\begin{align*}
\bar{Y}^{0,x,t,f}_{s}=f(\bar{X}^{0,x}_{t})&+\int_{s}^{t}\bar{b}^{*}(\bar{X}^{0,x}_{r})\Sigma(\bar{X}^{0,x}_{r})\bar{Z}^{0,x,t,f}_{r}dr+\int_{s}^{t}\bar{h}_{ij}^{*}(\bar{X}^{0,x}_{r})\Sigma(\bar{X}^{0,x}_{r})\bar{Z}^{0,x,t,f}_{r}d\langle B^i, B^j\rangle_r\\
&-\int_{s}^{t}\bar{Z}^{0,x,t,f}_{r}dB_r-(\bar{K}_s-\bar{K}_t),\ \ \ s\in[0,t].
\end{align*}
We have the following results: $u(t,x):=\mathcal{E}_tf(x)=Y^{0,x,t,f}_{0}$ is the unique viscosity solution of the following PDE:
\begin{equation}\label{PDE}
\left\{
\begin{aligned}
&\partial_tu-Lu=0,\\
&u(0,x)=f(x).
\end{aligned}
\right.
\end{equation}
%where
%\begin{align*}
%F(\partial_xu,\partial^{2}_{xx}u)=\langle \partial_xu,b\rangle+G(\langle\partial_xu,h\rangle +\langle\partial_{xx}^2u\sigma,\sigma\rangle).
%\end{align*}
$\bar{u}(t,x):=\bar{\mathcal{E}}_t=\bar{Y}^{0,x,t,f}_{0}$ is the unique viscosity solution of the following PDE:
\begin{equation}\label{PDE1}
\left\{
\begin{aligned}
&\partial_t\bar{u}-\bar{L}\bar{u}=0,\\
&\bar{u}(0,x)=f(x).
\end{aligned}
\right.
\end{equation}
%where
%\begin{align*}
%\bar{F}(\partial_xu,\partial^{2}_{xx}u)=\langle \partial_xu,\bar{b}\rangle+G(\langle\partial_xu,\bar{h}\rangle +\langle\partial_{xx}^2u\sigma,\sigma\rangle).
%\end{align*}
By Theorem 6.4.3 in Krylov \cite{Kr}(see also Theorem 4.4 in Appendix C in Peng \cite{Peng4}), there exists a constant $\alpha\in(0,1)$ such that for each $\kappa>0$,
\begin{equation*}
\|\bar{u}\|_{C^{1+\alpha/2,2+\alpha}([\kappa,T]\times\mathbb{R}^n)}<\infty.
\end{equation*}
Let $\hat{u}(t,x)=\bar{u}(T-t,x)$ and apply $G$-It\^{o}'s formula to $\hat{u}(s,\bar{X}_{s})$ for $s\in[0,T-\kappa]$
\begin{align*}
\hat{u}(s,\bar{X}^{0,x}_{s})=&\hat{u}(T-\kappa,\bar{X}^{0,x}_{T-\kappa})+\int_{s}^{T-\kappa}\bar{b}^{*}(\bar{X}^{0,x}_{r})\Sigma(\bar{X}^{0,x}_{r})\sigma^*(\bar{X}^{0,x}_{r})\partial_x\hat{u}(r,\bar{X}^{0,x}_r)dr\\
\ \ \ &+\int_{s}^{T-\kappa}\bar{h}_{ij}^{*}(\bar{X}^{0,x}_{r})\Sigma(\bar{X}^{0,x}_{r})\sigma^*(\bar{X}^{0,x}_{r})\partial_x\hat{u}(r,\bar{X}^{0,x}_r)d\langle B^i, B^j\rangle_r\\
\ \ \
&-\int_{s}^{T-\kappa}\sigma^*(\bar{X}^{0,x}_{r})\partial_x\hat{u}(r,\bar{X}^{0,x}_r)dB_r-(K^{'}_{T-\kappa}-K^{'}_s)
\end{align*}
where
 \begin{align*}
 K_s=&\frac{1}{2}\int_{0}^s\sigma^*(\bar{X}^{0,x}_{r})\partial_{xx}^2\hat{u}(r,\bar{X}^{0,x}_r)\sigma(\bar{X}^{0,x}_{r})d\langle B\rangle_r+\int_{s}^{T-\kappa}\bar{h}^{*}(\bar{X}^{0,x}_{r})\Sigma(\bar{X}^{0,x}_{r})\sigma^*(\bar{X}^{0,x}_{r})\partial_x\hat{u}(r,\bar{X}^{0,x}_r)d\langle B\rangle_r\\
 &-\int_{0}^{s}G(<\partial_x\hat{u}(r,\bar{X}^{0,x}_r),\bar{h}^{*}(\bar{X}^{0,x}_{r})>+<\partial_{xx}^2\hat{u}(r,\bar{X}^{0,x}_r)\sigma(\bar{X}^{0,x}_{r}),\sigma(\bar{X}^{0,x}_{r})>)dr
 \end{align*}
is a non-increasing $G$-martingale. By the uniqueness of solutions of $G$-BSDES, $\bar{Z}^{0,x,t,f}_{r}=\sigma^*(X^{0,x}_t)\partial_x\hat{u}(t,X^{0,x}_t)$ for all $0\leq r\leq t\leq T-\kappa$.
By the assumption that $\bar{\mathcal{E}}_t$ is monotone, we have $\bar{u}$ (then $\hat{u}$) is nondecreasing in $x$. Thus $\partial_x\hat(u)\geq 0$. By condition (D5) and the comparison of $G$-BSDEs, we have $Y^{0,x,t,f}_{r}\geq \bar{Y}^{0,x,t,f}_{r}$ for all $0\leq r\leq t\leq T-\kappa$. Particularly, we have $\mathcal{E}_tf(x)\geq\bar{\mathcal{E}}_tf(x)$. By the monotonicity of $\bar{\mathcal{E}}_t$, for all $x\geq\bar{x}$, $\mathcal{E}_tf(x)\geq\bar{\mathcal{E}}_tf(x)\geq\bar{\mathcal{E}}_tf(\bar{x})$. Thus $\mathcal{E}_t\geq\bar{\mathcal{E}}_t$ for all $t\in[0,T)$. By continuity, we have $\mathcal{E}_t\geq\bar{\mathcal{E}}_t$ for all $t\in[0,T]$.
\end{proof}
\section{Applications to PDEs}
In this section, we will give some applications of the above results to a special type of PDEs. We assume that $b,~h_{ij},~\sigma_i$ and $\bar{b},~\bar{h}_{ij},~\bar{\sigma}_i$ satisfy (H2) for each $i,j=1,\ldots,d$. We have the following results. We will omit the proofs since the results hold in view of Theorem \ref{5.1}, Theorem \ref{5.2}, Corollary \ref{5.9}, Theorem \ref{5.10} and the nonlinear Feynman-Kac formula in section 3.3.
\begin{theorem}
Let $u$ be the unique viscosity solution of PDE (\ref{PDE}), if (C1) and (C2) hold, then $u$ is nondecreasing in $x$  for all $f\in\mathcal{M}$.
\end{theorem}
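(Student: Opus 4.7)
The plan is to reduce the PDE statement directly to the stochastic monotonicity result already proved in Theorem \ref{5.1} via the nonlinear Feynman--Kac representation of Theorem \ref{theA.9}. Since PDE (\ref{PDE}) has the form $\partial_t u = Lu$ with $u(0,x)=f(x)$, a time reversal $v(t,x):=u(T-t,x)$ turns it into the standard terminal-value $G$-FBSDE PDE; then Theorem \ref{theA.9} identifies $u(t,x)=\mathcal{E}_t f(x)=\hat{\mathbb{E}}[f(X_t^{0,x})]$, where $X^{0,x}$ is the $G$-diffusion with coefficients $b,h_{ij},\sigma_j$ starting from $x$ at time $0$.

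Next I would invoke Theorem \ref{5.1}: under (C1) and (C2), the semigroup $\mathcal{E}_t$ generated by $L$ is monotone, meaning that for every $f\in\mathcal{M}$ and every $x\leq \bar x$ we have $\mathcal{E}_t f(x)\leq \mathcal{E}_t f(\bar x)$ for all $t\in[0,T]$. Translating this back through the Feynman--Kac identification, this is exactly the statement $u(t,x)\leq u(t,\bar x)$, i.e.\ $u$ is nondecreasing in $x$.

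It is worth noting that behind Theorem \ref{5.1} the key ingredient is the comparison theorem for multidimensional $G$-SDEs (Theorem \ref{com}) applied with $\bar b=b$ and $\bar h=h$: this yields $X_t^{0,x}\leq X_t^{0,\bar x}$ quasi-surely whenever $x\leq \bar x$, after which the monotonicity of $f\in\mathcal{M}$ together with the monotonicity of the sublinear expectation $\hat{\mathbb{E}}$ gives $\hat{\mathbb{E}}[f(X_t^{0,x})]\leq \hat{\mathbb{E}}[f(X_t^{0,\bar x})]$. So nothing new needs to be proved; the only potential subtlety is making sure the Feynman--Kac formula applies in the form needed here, where the PDE is forward in time with initial (rather than terminal) data, but this is handled by the time-reversal mentioned above and is routine once Remark \ref{remA.3} is invoked. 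Accordingly there is no genuine obstacle: the proof is a one-line consequence of Theorem \ref{5.1} and Theorem \ref{theA.9}, which is precisely why the author declared the proof omitted.
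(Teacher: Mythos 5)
Your proposal is correct and follows exactly the route the paper intends: the paper omits the proof, stating that it follows from Theorem \ref{5.1} together with the nonlinear Feynman--Kac formula, which is precisely the reduction you carry out (identify $u(t,x)=\mathcal{E}_tf(x)$ via Feynman--Kac, then apply the monotonicity of $\mathcal{E}_t$ under (C1) and (C2)). Your additional remarks on the time reversal and on the underlying comparison theorem for multidimensional $G$-SDEs are accurate and consistent with the paper's argument.
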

\begin{theorem}
Let $u$ be the unique viscosity solution of PDE (\ref{PDE}), if $u$ is nondecreasing in $x$  for all $f\in\mathcal{M}$, then (C1) and (C2') hold.
\end{theorem}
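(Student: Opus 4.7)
The plan is to reduce the statement directly to Theorem \ref{5.2} via the nonlinear Feynman–Kac formula. First, I would invoke the Feynman–Kac representation from Section 3.3 (Theorem \ref{theA.9}) applied with the degenerate choices $f\equiv 0$, $g_{ij}\equiv 0$, and terminal data $\Phi = f\in \mathcal{M}$: this identifies the unique viscosity solution of (\ref{PDE}) with $u(t,x)=Y_{0}^{0,x,t,f}=\hat{\mathbb{E}}[f(X_{t}^{0,x})]=\mathcal{E}_{t}f(x)$, so hypotheses phrased in terms of $u$ are hypotheses on the $G$-Markov semigroup $\mathcal{E}_{t}$.

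Next, I would translate the assumption ``$u$ is nondecreasing in $x$ for every $f\in\mathcal{M}$'' into the semigroup statement: for any $x\geq \bar{x}$ and any $f\in\mathcal{M}$,
\begin{equation*}
\mathcal{E}_{t}f(x)=u(t,x)\geq u(t,\bar{x})=\mathcal{E}_{t}f(\bar{x}),
\end{equation*}
which is exactly the definition of monotonicity of $\mathcal{E}_{t}$ (the case $\bar{\mathcal{E}}_{t}=\mathcal{E}_{t}$ in item (2) of the definitions preceding Theorem \ref{5.1}). Once monotonicity of $\mathcal{E}_{t}$ is in hand, Theorem \ref{5.2} immediately yields conditions (C1) and (C2'), which is the conclusion sought.

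There is essentially no new obstacle here; the only mildly non-trivial point is making sure the Feynman–Kac formula can be applied with a general $f\in\mathcal{M}$. Since $\mathcal{M}$ consists of bounded Lipschitz functions, the growth and Lipschitz hypotheses (A1)–(A3) of Theorem \ref{theA.9} are satisfied (with $m=0$ and vanishing drivers), so the identification $u(t,x)=\mathcal{E}_{t}f(x)$ is legitimate for every test function used in the argument. Because the excerpt explicitly says the proofs in this section are omitted in view of the preceding results, I would write the proof as a one-line reduction: ``By the nonlinear Feynman–Kac formula $u(t,x)=\mathcal{E}_{t}f(x)$; the monotonicity of $u$ in $x$ for every $f\in\mathcal{M}$ is the monotonicity of $\mathcal{E}_{t}$, so (C1) and (C2') follow from Theorem \ref{5.2}.''
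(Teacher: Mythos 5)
Your proposal is correct and is essentially the paper's own (omitted) argument: the paper states that the results of Section 6 follow from the nonlinear Feynman--Kac formula together with Theorem \ref{5.2}, which is exactly your reduction of the hypothesis on $u$ to the monotonicity of the semigroup $\mathcal{E}_t$. The only cosmetic point is that (A3) requires a \emph{positive} integer $m$, so for bounded Lipschitz $f\in\mathcal{M}$ one should take $m=1$ rather than $m=0$, which changes nothing.
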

\begin{theorem}
Let us consider PDE (\ref{PDE}) and PDE (\ref{PDE1}), if $u(t,x)\geq\bar{u}(t,\bar{x})$ for all $t\in[0,T]$, $f\in\mathcal{M}$ and $x\geq\bar{x}$, then (D1), (D2), and (D2') hold.
\end{theorem}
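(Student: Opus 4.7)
The plan is to reduce the hypothesis to the semigroup inequality $\mathcal{E}_t\geq\bar{\mathcal{E}}_t$ introduced in Section~5, and then invoke the results already established there. The nonlinear Feynman–Kac formula (Theorem~\ref{theA.9}), together with the probabilistic representation used to define the Markov semigroups in Section~5, identifies the viscosity solutions as $u(t,x)=\mathcal{E}_tf(x)=\hat{\mathbb{E}}[f(X^{0,x}_t)]$ and $\bar{u}(t,x)=\bar{\mathcal{E}}_tf(x)=\hat{\mathbb{E}}[f(\bar{X}^{0,x}_t)]$. Hence the hypothesis
\[
u(t,x)\geq \bar{u}(t,\bar{x}),\qquad \forall\, t\in[0,T],\ f\in\mathcal{M},\ x\geq \bar{x},
\]
is literally the definition of $\mathcal{E}_t\geq\bar{\mathcal{E}}_t$.

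Given this translation, conditions (D1) and (D2) follow at once from Theorem~\ref{oder}, while (D2') follows from Corollary~\ref{5.9}. So the proof structure I would present is: (i) invoke Theorem~\ref{theA.9} to rewrite both PDE solutions as semigroup actions on the initial datum $f$; (ii) observe that the assumed pointwise order of $u$ and $\bar{u}$ over all monotone Lipschitz $f$ and all $x\geq\bar{x}$ is precisely $\mathcal{E}_t\geq\bar{\mathcal{E}}_t$; (iii) apply Theorem~\ref{oder} and Corollary~\ref{5.9} to obtain the three conclusions.

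There is essentially no technical obstacle here beyond checking that the class $\mathcal{M}$ of bounded Lipschitz monotone functions chosen in Section~5 is exactly the class appearing in the hypothesis, which it is by definition; and verifying that the infinitesimal generators of $\mathcal{E}_t$ and $\bar{\mathcal{E}}_t$ are indeed the operators $L$ and $\bar{L}$ appearing in the PDEs~(\ref{PDE}) and~(\ref{PDE1}), which is exactly what the Feynman–Kac formula provides. Thus the theorem is a direct corollary of the semigroup-level necessary conditions of Section~5 once the PDE formulation is translated back into its stochastic counterpart.
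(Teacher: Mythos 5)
Your proposal is correct and is essentially the paper's own argument: the paper explicitly omits the proof, stating that the Section~6 results follow from the Section~5 theorems together with the nonlinear Feynman--Kac formula, which is precisely your reduction of the hypothesis to $\mathcal{E}_t\geq\bar{\mathcal{E}}_t$ followed by an application of Theorem~\ref{oder} and Corollary~\ref{5.9}. No further comment is needed.
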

\begin{theorem}
Assume (H3) and assume that $\sigma\sigma^*$ (or resp. $\bar{\sigma}\bar{\sigma}^*$) is uniformly positive definite. Suppose that (C1) and (C2) hold for $b,h,\sigma$ or $\bar{b},\bar{h},
\bar{\sigma}$. Let us consider PDE (\ref{PDE}) and PDE (\ref{PDE1}), if (D1) and (D5) hold, then $u(t,x)\geq\bar{u}(t,\bar{x})$ for all $t\in[0,T]$, $f\in\mathcal{M}$ and $x\geq\bar{x}$.
\end{theorem}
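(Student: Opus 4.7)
The plan is to deduce this theorem as a direct corollary of the semigroup-level result Theorem \ref{5.10}, by using the nonlinear Feynman--Kac formula (Theorem \ref{theA.9}) as the bridge between the PDE solutions $u,\bar u$ and the Markov semigroups $\mathcal{E}_t,\bar{\mathcal{E}}_t$ associated to the $G$-It\^o diffusions generated by $(b,h,\sigma)$ and $(\bar b,\bar h,\bar\sigma)$. Concretely, for any $f\in\mathcal{M}$ one sees, via Theorem \ref{theA.9} applied with driver $f\equiv 0$, $g\equiv 0$ and terminal condition $\Phi=f$, that $u(t,x)=\mathcal{E}_tf(x)$ and $\bar u(t,x)=\bar{\mathcal{E}}_tf(x)$ are the unique viscosity solutions of PDE (\ref{PDE}) and PDE (\ref{PDE1}) respectively.

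Next I would verify that the hypotheses of Theorem \ref{5.10} are all in force. Assumption (H3) and the uniform positive-definiteness of $\sigma\sigma^*$ (or $\bar\sigma\bar\sigma^*$) are hypothesized directly. The monotonicity assumption of Theorem \ref{5.10} - that one of $\mathcal{E}_t$ or $\bar{\mathcal{E}}_t$ is monotone - follows from Theorem \ref{5.1}: if (C1) and (C2) hold for $(b,h,\sigma)$ then $\mathcal{E}_t$ is monotone, and if they hold for $(\bar b,\bar h,\bar\sigma)$ then $\bar{\mathcal{E}}_t$ is monotone. The remaining hypotheses (D1) and (D5) are assumed outright. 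Therefore Theorem \ref{5.10} gives $\mathcal{E}_t\ge\bar{\mathcal{E}}_t$ for all $t\in[0,T]$, which by the very definition of the order $\ge$ between semigroups means $\mathcal{E}_tf(x)\ge\bar{\mathcal{E}}_tf(\bar x)$ for all $f\in\mathcal{M}$ and $x\ge\bar x$.

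Combining the two displays yields $u(t,x)\ge\bar u(t,\bar x)$ for all $t\in[0,T]$, $f\in\mathcal{M}$, and $x\ge\bar x$, which is exactly the claim. There is no real obstacle: the whole content of the theorem is the translation from stochastic semigroup language to PDE language via Feynman--Kac, and the author explicitly flags that the proof is omitted for precisely this reason. The only point that deserves care is checking that the regularity assumed on $(b,h,\sigma)$ and $(\bar b,\bar h,\bar\sigma)$ in (H2) is strong enough to invoke both Theorem \ref{theA.9} (whose (A1)--(A3) are automatic here since there are no $Y,Z$ arguments and all coefficients are bounded Lipschitz in $x$ and trivially continuous in $t$) and Theorem \ref{5.10}; both checks are immediate from (H2) together with the standing assumptions of Section 5.
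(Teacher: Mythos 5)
Your proposal is correct and follows exactly the route the paper intends: the authors explicitly omit the proof, stating that the result follows from Theorem \ref{5.1}, Theorem \ref{5.10} and the nonlinear Feynman--Kac formula, which is precisely the chain you spell out (Theorem \ref{5.1} supplies the monotonicity hypothesis of Theorem \ref{5.10} from (C1)--(C2), Theorem \ref{5.10} gives $\mathcal{E}_t\geq\bar{\mathcal{E}}_t$, and Feynman--Kac identifies $u(t,x)=\mathcal{E}_tf(x)$ and $\bar{u}(t,x)=\bar{\mathcal{E}}_tf(x)$ as the unique viscosity solutions of (\ref{PDE}) and (\ref{PDE1})). No gaps.
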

\section*{Acknowledgment}
%The authors would like to thank the editor and a referee for their careful reading and helpful suggestions.
%%%%%%%%%%%%%%%%%%%%%%%%%%%%%%%%%%%%%%%%%%%%%%%%%%%%%%%%%%%%%%%%%%%%%%%%%%%%%%%%%%%

\end{document}